\tikzset{>=latex} % for LaTeX arrow head
\newcommand{\C}{\mathbb{C}}
\newcommand{\Q}{\mathbb{Q}}
\newcommand{\Z}{\mathbb{Z}}
\newcommand{\N}{\mathbb{N}}
\newcommand{\R}{\mathbb{R}}
\newcommand{\T}{\mathbb{T}}
\newcommand{\Id}{\text{Id}}
\newcommand{\im}{\text{Im}}
\newcommand{\supp}{\text{supp}}
\newcommand{\inj}{\text{inj}}
\newcommand{\eu}{\mathrm{e}}
\renewcommand{\phi}{\varphi}
\renewcommand{\theta}{\vartheta}
\renewcommand{\epsilon}{\varepsilon}
\newtheorem{theo}{Theorem}[section]
\newtheorem{prop}[theo]{Proposition}
\newtheorem{quest}[theo]{Question}
\newtheorem{coro}[theo]{Corollary}
\newtheorem{exa}[theo]{Example}
\newtheorem{lemm}[theo]{Lemma}
\newtheorem{conj}[theo]{Conjecture}
\theoremstyle{definition}
\newtheorem{def1}[theo]{Definition}
\theoremstyle{remark}
\newtheorem{rema}[theo]{Remark}
\newcommand{\nwc}{\newcommand}
\nwc{\Oph}{\operatorname{Op}_\hbar}
\nwc{\la}{\langle}
\nwc{\ra}{\rangle}
\newcommand{\Spec}{\text{Spec}}
\nwc{\mf}{\mathbf} %Latex (as in \bf not tilted math letters)
\nwc{\blds}{\boldsymbol} %Latex 
\nwc{\ml}{\mathcal} %Latex
\renewcommand{\Re}{\operatorname{Re}}
\newcommand{\eps}{\varepsilon}
\newcommand{\tr}{\text{Tr}\,}
\newcommand{\inv}{^{-1}}
\newcommand{\wt}{\widetilde}
\newcommand{\wh}{\widehat}
\renewcommand{\d}{\partial}
\newcommand{\vol}{{\operatorname{Vol}}}
\newcommand{\black}[1]{\color{black}}
\newcommand{\I}{\mathcal{I}}
\newcommand{\const}{\text{const.}}
\newcommand{\dt}{\delta}
\newcommand{\J}{\mathcal{J}}
\newtheorem{problem*}{Problem}
\newtheorem{theorem}{Theorem}[section]
\newtheorem{proposition}[theorem]{Proposition}
\newtheorem{conjecture*}{Conjecture}
\theoremstyle{definition}
\theoremstyle{remark}
\newtheorem*{remark}{Remark}
\newcommand{\CL}{\mathcal{L}}
\newcommand{\CD}{\mathcal{D}}
\newcommand{\CT}{\mathcal{T}}
\newcommand{\CO}{\mathcal{O}}
\newcommand{\CU}{\mathcal{U}}
\DeclareMathOperator{\sgn}{sgn}
\DeclareMathOperator{\rot}{rot}
\DeclareMathOperator{\osc}{osc}
\DeclareMathOperator{\singsupp}{sing supp}
\DeclareMathOperator{\leb}{Leb}
\newcommand\restr[2]{{% we make the whole thing an ordinary symbol
  \left.\kern-\nulldelimiterspace % automatically resize the bar with \right
  #1 % the function
  \vphantom{\big|} % pretend it's a little taller at normal size
  \right|_{#2} % this is the delimiter
  }} 
\newcommand{\normm}[1]{{\left\vert\kern-0.15ex\left\vert\kern-0.15ex\left\vert #1 
    \right\vert\kern-0.15ex\right\vert\kern-0.15ex\right\vert}}
\newcommand{\et}{\quad \text{and} \quad}
\newcommand{\dif}{\mathrm{d}}
\title{Spectral rigidity of two-dimensional Liouville tori}
\date{\today}
\author{Joscha Henheik}
\address{Department of Mathematics, University of Geneva, Geneva, Switzerland}
\email{joscha.henheik@unige.ch}
\author{Vadim Kaloshin}
\address{Institute of Science and Technology Austria, Klosterneuburg, Austria}
\email{vadim.kaloshin@gmail.com}
\author{Yunzhe Li}
\address{Institute of Science and Technology Austria, Klosterneuburg, Austria}
\email{yunzhe.li@ist.ac.at}
\author{Amir Vig}
\address{Department of Mathematics, University of Toronto, Toronto, Canada} \email{amir.vig@utoronto.ca}
\begin{document}

\begin{abstract}
It is a folklore conjecture that Liouville metrics are the only Riemannian metrics with integrable geodesic flow on the two-dimensional torus. In this paper, we study Laplace-isospectral deformations, within a fixed conformal class, of generic Liouville metrics and prove two rigidity results: First, any isospectral deformation that is affine in $\epsilon$ within the fixed conformal class is trivial. Second, any isospectral deformation that is analytic in the perturbation parameter and whose Taylor coefficients are trigonometric polynomials in the spatial variables remains Liouville and is obtained by componentwise rearrangement of the original metric. For the first result, the proof combines a wave-trace noncancellation result, which allows us to recover suitable length data from the Laplace spectrum, with a second-variation analysis of the energy functional along closed geodesics. For the second result, we prove that, within the Liouville class considered here, the relevant length-isospectrality condition is equivalent to componentwise rearrangement.
\end{abstract}

\maketitle

\tableofcontents

\section{Introduction and main results}\label{sec: Introduction and Main Results} 
Let $(X, g)$ be a compact Riemannian manifold and $\Delta_g$ its associated Laplace-Beltrami operator. We denote by $\Spec(-\Delta_g)$ the set of eigenvalues of $-\Delta_g$, i.e.
\begin{equation*}
   \Spec(-\Delta_g) = \{\lambda^2 \geq 0 : \exists u \in C^\infty(X) \text{ with }  -\Delta_g u = \lambda^2 u\}
\end{equation*}
regarded as a multiset, with eigenvalues repeated according to multiplicity. 
Suppose $(g_\epsilon)_{\epsilon \in (-\epsilon_0, \epsilon_0)}$ is a one-parameter family of Riemannian metrics on $X$.
We say that the family $(g_\epsilon)$ is \textbf{isospectral} if $\Spec(-\Delta_{g_\epsilon}) = \Spec(-\Delta_{g_0})$ for all $\epsilon \in (-\epsilon_0, \epsilon_0)$. 

\medskip

In this paper, we study spectral rigidity of \textbf{Liouville metrics} on the two-dimensional torus. That is, we consider the case $X = \T^2$, and take $g_0$ to be a Riemannian metric whose line element is of the form 
\begin{equation}\label{eq:deform}
    \dif s^2 = \big(1 + f_1(x_1) + f_2(x_2)\big) (\dif x_1^2 + \dif x_2^2).
\end{equation}
It is a well-known folklore conjecture that Liouville metrics are the only integrable metrics on $\T^2$, which can be thought of as a closed manifold analogue of the famous Birkhoff conjecture (see Section~\ref{sec:dynamics} for the definition of integrability and Section~\ref{sec:background} for a comparison with billiards).

\medskip

We now formulate
our first main result, whose proof is given in Section \ref{subsec:proof1} below.

\begin{theo}[First main result: Rigidity of affine conformal deformations] \label{thm:main2}
  Let $(g_\epsilon)$ be a family of Riemannian metrics on $\T^2$ of the form 
  \begin{equation}
      \label{eq:deformLiouLin}
        \dif s^2_\epsilon = \big(1 + f_1(x_1) + f_2(x_2) + \epsilon U(x_1, x_2)\big) (\dif x_1^2 + \dif x_2^2) \,, 
  \end{equation}
  where the pair $(f_1, f_2)$ belongs to an open and dense set of $C^\infty(\T) \times C^\infty(\T)$\footnote{\label{ftn:generic}This set is specified by the \emph{non-coincidence condition} \eqref{eq:INCC}; see also Appendix \ref{app:NCC}.} and $U \in C^\infty(\T^2)$ is arbitrary, such that, for $|\epsilon|$ small enough, the conformal factor in \eqref{eq:deformLiouLin} is positive.\footnote{The positivity shall henceforth always be understood implicitly, although not explicitly spelled out.} If the family $(g_\epsilon)$ is isospectral for small $|\epsilon|$, then $U \equiv 0$.
\end{theo}

Next, we formulate our second main result. Here, we assume that the perturbation $U(\epsilon, x_1, x_2)$ admits a series expansion $U(\epsilon, x_1, x_2) = \sum_{n \ge 1} \epsilon^n U^{(n)}(x_1, x_2)$, where each $U^{(n)} \in C^\infty(\T^2)$ is a trigonometric polynomial with $\max_{(x_1, x_2) \in \T^2} |U^{(n)}(x_1, x_2)| \le C^n $ for some $C > 0$. The proof of Theorem~\ref{thm:mainsecond} is given in Section \ref{subsec:proof2} below. 

\begin{theo}[Second main result: Analytic deformations and rearrangements] \label{thm:mainsecond}
  Let $(g_\epsilon)$ be a family of Riemannian metrics on $\T^2$ of the form 
  \begin{equation}
      \label{eq:deformLioutrig}
        \dif s^2_\epsilon = \big(1 + f_1(x_1) + f_2(x_2) +  U(\epsilon, x_1, x_2)\big) (\dif x_1^2 + \dif x_2^2) \,, 
  \end{equation}
  where the pair $(f_1, f_2)$ belongs to a generic set\footref{ftn:generic} of analytic functions in $C^\omega(\T) \times C^\omega(\T)$ and $U$ is as above, additionally satisfying that, for $|\epsilon|$ small enough, the conformal factor in \eqref{eq:deformLioutrig} is positive. Then, if the family $(g_\epsilon)$ is isospectral for small $\epsilon$, $U$ is separable, i.e.
  \begin{equation} \label{eq:separablemain}
    U(\epsilon, x_1, x_2)  = U_1(\epsilon, x_1) + U_2(\epsilon, x_2)
  \end{equation}
  and for any $\epsilon$ there exists $c_\epsilon \in \R$ such that $f_1+ U_1(\epsilon, \cdot)$ is a rearrangement of $f_1 + c_\epsilon$ and $f_2+ U_2(\epsilon, \cdot)$ is a rearrangement of $f_2 - c_\epsilon$. 

  Moreover, whenever $f_i \equiv \mathrm{const.}$ for a given $i \in \{1,2\}$, then the trigonometric-polynomial assumption on the $x_i$-dependence of the Taylor coefficients of $U$ may be replaced by analyticity in $x_i$.
\end{theo}

Recall that for measurable $f, \tilde{f}: \T\to \R$, $\tilde f$ is a \emph{rearrangement} of $f$, if the Lebesgue measures of their superlevel sets agree, i.e.~$|\{f > t\}| = |\{ \tilde f > t \}|$ for any $t \in \R$.\footnote{It is a separate interesting question whether a general function $f$ on $\T$ admits a non-trivial rearrangement of the form $f(x) + \sum_{n \ge 1} \epsilon^n U^{(n)}(x)$ where all the $U^{(n)}$'s are trigonometric polynomials.} 
See Figure~\ref{fig:rearrangement} for an illustration.
\begin{figure}
    \centering
    \includegraphics[scale=0.8]{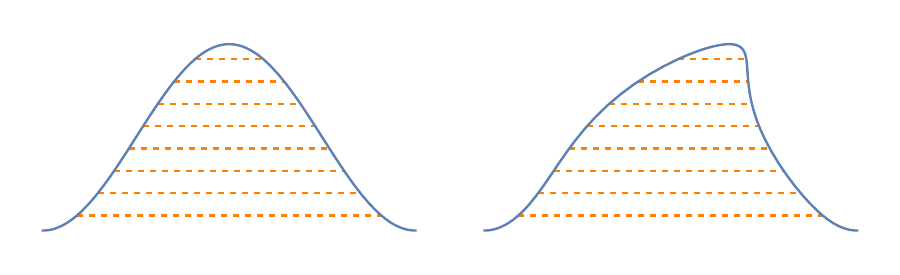}
    \caption{A rearrangement preserves the superlevel measure. The blue curves are the graphs of a function before and after a rearrangement.}
    \label{fig:rearrangement}
\end{figure}

\medskip

In the remainder of this introduction, we discuss our main results (Section~\ref{subsec:discuss}).

\subsection{Discussion of our main result} \label{subsec:discuss}
To our knowledge, Theorems~\ref{thm:main2}--\ref{thm:mainsecond} are among the first Laplace spectral rigidity results for closed manifolds whose geodesic flow is integrable. They follow a result on dynamical rigidity by the first author in \cite{henheik2025deformational}.

\subsubsection{Future directions} 
Recently, much of the literature on inverse spectral problems has focused on chaotic systems with or without boundary as well as integrable billiard dynamics in the plane. In future work, we hope to extend our rigidity result above to the following nondeformational setting:

\begin{conj}\label{conj:localquantum}
    Let $g_0$ be a Liouville metric on $\T^d$. There exists $\dt >0$ such that if $\|g_0 - g\|_{C^r} < \dt$ and $g$ is (Laplace) isospectral to $g_0$, then $g$ must be isometric to $g_0$.
\end{conj}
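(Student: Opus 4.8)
\textbf{Proof strategy for Conjecture~\ref{conj:localquantum}.}

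The plan is to bootstrap from the deformational result (Theorem~\ref{thm:main2}) to the full local statement via a compactness/connectedness argument on the space of metrics isospectral to $g_0$, combined with heat-trace a priori bounds. First I would recall that, by classical results (Osgood--Phillips--Sarnak in dimension two, and their higher-dimensional analogues under curvature constraints), the set $\mathcal{I}(g_0) = \{g : \Spec(-\Delta_g) = \Spec(-\Delta_{g_0})\}$, quotiented by the diffeomorphism group, is compact in the $C^\infty$ topology: the heat invariants $\int_{\T^d} \mathrm{vol}_g$, $\int_{\T^d} \mathrm{Scal}_g \, \mathrm{dvol}_g$, and the higher curvature integrals bound all Sobolev norms after a suitable gauge-fixing (e.g. harmonic coordinates or a conformal normalization). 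So it suffices to show that $g_0$ is an isolated point of $\mathcal{I}(g_0)/\mathrm{Diff}(\T^d)$; compactness then upgrades this to a genuine $C^r$-neighborhood being trivial.

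Next I would reduce, via the conformal structure, to a one-parameter family and invoke Theorem~\ref{thm:main2}. The subtle point is that a single metric $g$ near $g_0$ need not lie on a \emph{linear} isospectral path through $g_0$, which is exactly what Theorem~\ref{thm:main2} requires. To bridge this gap, suppose for contradiction there is a sequence $g_n \to g_0$ in $C^r$ with each $g_n \in \mathcal{I}(g_0)$ and $g_n$ not isometric to $g_0$. Writing $g_n = (1 + f_1 + f_2 + v_n)(dx_1^2 + dx_2^2)$ after gauge-fixing (using that $C^r$-nearby metrics on $\T^2$ are conformal to a fixed flat representative, hence to a nearby Liouville form), set $\epsilon_n = \|v_n\|$ and $U_n = v_n/\epsilon_n$, so $\|U_n\| = 1$. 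Passing to a subsequence, $U_n \to U_\infty$ in a slightly weaker norm with $\|U_\infty\| = 1$ (using elliptic regularity from the heat-trace bounds to promote convergence). The heart of the matter is then to show that the rescaled limit $U_\infty$ satisfies the \emph{linearized} isospectrality condition used in Theorem~\ref{thm:main2} — namely that the first-order variation of the wave trace (the sum over the length spectrum with its symplectic/Maslov data) vanishes. This is where the noncancellation result of the paper enters: isospectrality of $g_n$ with $g_0$ forces the length spectrum data to match to all orders, and dividing by $\epsilon_n$ and taking the limit extracts precisely the first variation, which must vanish term by term by noncancellation. Hence $U_\infty$ is a nontrivial solution of the infinitesimal problem, contradicting Theorem~\ref{thm:main2}; so no such sequence exists and $g_0$ is isolated.

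The main obstacle I anticipate is the passage from "isospectral to all orders" to "the first variation of the wave trace vanishes" in a way that is uniform enough to survive the rescaling limit. Concretely: the wave trace of $g_n$ differs from that of $g_0$ by something that is $o(\epsilon_n)$ near each length $\ell$ in the length spectrum of $g_0$, but one needs quantitative control — that higher-order terms are genuinely $O(\epsilon_n^2)$ with constants independent of $n$ — which requires careful stationary-phase estimates near each periodic geodesic, uniformly over the compact family $\{g_n\}$. A second, more structural difficulty is justifying the conformal gauge-fixing globally: while every metric $C^r$-close to the flat torus is conformal to a flat one, one must check the conformal factor inherits the required regularity and smallness, and that the generic conditions on $f_1, f_2$ are stable under the perturbation so that Theorem~\ref{thm:main2} genuinely applies to the limiting problem. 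If these uniformities can be established — plausibly by combining the paper's noncancellation machinery with Osgood--Phillips--Sarnak compactness — the contradiction argument closes and yields Conjecture~\ref{conj:localquantum}. One should also note that the second variational analysis of the energy functional for closed geodesics, used in the proof of Theorem~\ref{thm:main2} to leverage preservation of a single rational torus, would need a nonlinear (finitely many orders) refinement here, since the limit $U_\infty$ only encodes the leading behavior; handling the genuinely nonlinear remainder is the price of dropping the linearity hypothesis.
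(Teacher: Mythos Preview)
The paper does not prove this statement: Conjecture~\ref{conj:localquantum} is explicitly presented as an open problem under the heading ``Future directions,'' with the authors writing that they ``hope to extend'' their deformational result to this nondeformational setting in future work. There is therefore no proof in the paper to compare your proposal against.

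That said, your strategy is a reasonable outline of how one might try to attack the conjecture, and you are candid about its weak points. Two comments are worth adding. First, the gap you identify at the end is the essential one: the rescaled limit $U_\infty$ only satisfies \emph{infinitesimal} isospectrality (vanishing first variation of the spectral data), whereas Theorem~\ref{thm:main2} requires the linear family $g_0 + \epsilon U$ to be genuinely isospectral for a range of $\epsilon$. The paper's second-variation argument (Theorem~\ref{thm:secondorder}) crucially uses that $E^{(2)} \equiv 0$, which comes from exact linearity of the deformation, not merely from a first-order condition; so invoking Theorem~\ref{thm:main2} on $U_\infty$ is not legitimate without a substantial strengthening. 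Second, the paper's Example~\ref{ex:two rivers} is direct evidence that the passage from Laplace to length isospectrality---on which your noncancellation step relies---may genuinely break down for nonlinear deformations: the two-rivers family is length-isospectral but (conjecturally) not Laplace-isospectral, so the length spectrum alone cannot distinguish them. This suggests that any proof of Conjecture~\ref{conj:localquantum} will require spectral invariants beyond those captured by the wave-trace singularities used in the present paper, and your compactness-plus-blowup scheme does not by itself supply them.
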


Conjecture~\ref{conj:localquantum} is a Laplace spectral type analogue of a closed manifold version of the (local) Birkhoff conjecture (see Conjecture~\ref{conj:localclassical} below). In the classical setting, work by the second author together with Corsi (\cite{CorsiKaloshin2018}), in addition to the results of Agapov-Bialy-Mironov \cite{ABM}, suggest that there may indeed exist non-Liouville metrics with integrable geodesic flow. Hence, we instead ask the following \textit{local} question regarding dynamical rigidity:
\begin{quest}\label{conj:localclassical}
    Let $g_0$ be a Liouville metric on $\T^d$. Is it true that there exists $\eps >0$ such that whenever $\|g_0 - g\|_{C^r} < \eps$ and the geodesic flow for the metric $g$ is integrable, $g$ must also be Liouville?
\end{quest}

While an answer to Question~\ref{conj:localclassical} remains out of reach for now, it may be more feasible if one restricts to the class of \textit{polynomially} integrable metrics (see Section~\ref{sec:background} for more details).

\begin{rema}
In subsequent work, the authors plan to investigate what happens to the dynamics and length spectrum of symmetric metrics such that the stable and unstable manifolds continue to coincide, both in the perturbative and nonperturbative regimes. In view of the work of Katok-Krikorian \cite{KatokKrikorian} and Piftankin-Treschev \cite{PiftankinTreschev}, we expect that accumulation of invariant tori at the separatrix will persist under small perturbations. The Katok-Krikorian construction also gives the possibility of the coincidence of stable and unstable manifolds such that invariant tori do not accumulate at the separatrix. We also raise the possibility of the persistence of rational invariant tori near the separatrix under non-Liouville perturbations. From a Laplace spectral viewpoint, it is also interesting to investigate the degenerate Bohr-Sommerfeld levels associated to the separatrix.
\end{rema}

\subsubsection{On the linearity assumption in Theorem~\ref{thm:main2}}
While the linearity assumption in Theorem~\ref{thm:main2} may appear to be an artificial limitation, it might in fact be necessary to obtain triviality of the deformation. This is indicated by the following example, which is a special case of a \emph{rearrangement} as discussed in Theorem \ref{thm:mainsecond}.

\begin{exa}[Two rivers] \label{ex:two rivers}
{For $c \in \R$,} consider a metric on $\T^2$ of the form $$g_c = (1 + f(x_2 - c) + f(x_2)) (\dif x_1^2 + \dif x_2^2)$$  where $f$ is a smooth nonnegative function supported in a small proper subinterval of $\T$.
Choose $c$ such that $f$ and $f( \cdot - c)$ have disjoint support so that $g_{{c}}$ is flat outside two disjoint horizontal strips, which we call rivers (see Figure~\ref{fig:tworivers}).

For this metric, it is not hard to see, for example by applying the Maupertuis principle (Section~\ref{subsubsec:Maupertuis} and Lemma~\ref{lem:length-liouville}), that all closed geodesics belong to one of two types: firstly, there are orbits which pass through both rivers at a nonzero angle, swim across in a symmetric but curved fashion, and then exit back into the flat region, where they continue in a straight line; secondly, there are the orbits which swim downstream along the supports of $f(x_2)$ or $f(x_2 - c)$. In particular, the nonnegativity of $f$ ensures that there are no orbits trapped in between the two rivers.

Observe now that, if one changes the parameter $c$, it is clear that neither type of orbit changes its length, but for different $c \mod 1$ corresponding to disjoint rivers, the corresponding metrics are not isometric! 
\end{exa}

\begin{figure}
    \centering
    \includegraphics[width=0.75\linewidth]{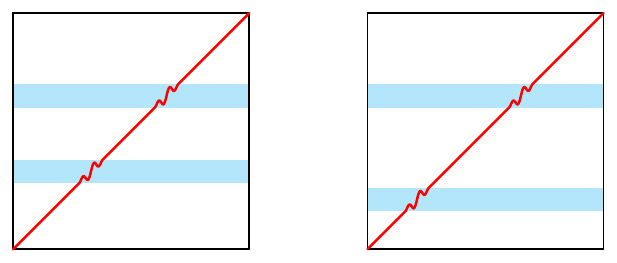}
    \caption{Two rivers (Example \ref{ex:two rivers}): Perturbing the flat metric along horizontal blue strips (the \emph{rivers}), we can then translate one of these rivers while preserving the length spectrum of the metric. More precisely, by varying the distance between the two blue regions, we produce nonisometric perturbations which are nonetheless length isospectral.}
    \label{fig:tworivers}
\end{figure}

Since the Laplace spectrum is typically accessed via the length spectrum (see also Section~\ref{sec:background}), this example suggests that there may exist length -- but not Laplace -- isospectral metrics in the family {$(g_{c+\epsilon})_\epsilon$, which is clearly deformed nonlinearly in $\epsilon$}. However, this example might be ruled out as something `nongeneric.'

\medskip

The construction of two rivers clearly generalizes to any finite number of (narrow) rivers. As we show in Theorem~\ref{thm:rearrangement-rotMLS} in Section \ref{sec:length-liouville} below, preservation of the length spectrum within a family of Liouville metrics is equivalent to it being a family of rearrangements (see also Theorem \ref{thm:liourearrange}). The idea to construct length isospectral metrics via rearrangements is inspired by recent work of Abbondandolo and Mazzucchelli (\cite{AbbondandoloMazzucchelli2026}). However, we should note that length isospectrality does not necessarily imply Laplace isospectrality, even under generic assumptions. In fact, we believe that the metrics in Example~\ref{ex:two rivers} are not Laplace isospectral. See the remark below the proof of Theorem \ref{thm:liourearrange} in Section \ref{sec:length-liouville} for  more details.

\begin{rema}
    {We mention that in this work we need to restrict to the two-dimensional setting. This is for two main reasons: First, Proposition \ref{prop:geom RT} does not have a known generalization to higher dimensions since the proof uses a transversality argument for planar curves. Second, Theorem \ref{SogasTheorem} (due to Soga \cite{Soga})  does not apply in dimensions $d \ge 3$.}
\end{rema}

\subsection{Outline} 
The rest of this paper is structured as follows. First, in Section \ref{subsec:proof}, we provide the proofs of Theorems \ref{thm:main2}--\ref{thm:mainsecond} by reducing the argument to the main steps and ideas required for it, essentially relying on four Theorems \ref{thm:isospec implies RI}, \ref{thm:RT rig}, \ref{thm:RI implies liouv}, and \ref{thm:liourearrange}. Then, in Section~\ref{sec:background}, we discuss the relevant background and results related to our main theorems. Afterwards, we enter the proof of the main ingredients for the proof of our main results: In Section~\ref{sec:dynamics}, we collect a few dynamical preliminaries, which are relevant in all following sections. In Section~\ref{sec:trace}, we establish a microlocal trace formula which allows us to deduce length isospectrality (for the relevant minimal rotational lengths) from Laplace isospectrality. This completes the proof of Theorem~\ref{thm:isospec implies RI}. Then, in Section~\ref{sec:secondorder}, we provide the proof of Theorem~\ref{thm:RT rig} (which together with Theorem \ref{thm:isospec implies RI} yields Theorem \ref{thm:main2}). Afterwards, in Sections \ref{sec:RIliouv}--\ref{sec:length-liouville} we give the proofs of the last two ingredients needed for the proof of our second main result (Theorem~\ref{thm:mainsecond}): In Section \ref{sec:RIliouv} we provide the proof of Theorem \ref{thm:RI implies liouv}. Finally, in Section~\ref{sec:length-liouville} we prove Theorem \ref{thm:liourearrange}. 
Genericity of the noncoincidence condition~\eqref{eq:INCC} appearing as the ``generic'' condition in Theorems~\ref{thm:main2}--\ref{thm:mainsecond} is proved in Appendix~\ref{app:NCC}. 

\subsection*{Acknowledgments}
We are grateful to Jacopo de Simoi and Ke Zhang for their interest in this work and helpful comments. We also thank Daniel Holmes, Leonid Polterovich and Vukašin Stojisavljević for discussions. Moreover, we gratefully acknowledge (partial) financial support by the ERC Grants \emph{ProbQuant} (jointly with the Swiss State
Secretariat for Education, Research and Innovation) and \emph{RMTBeyond \#101020331} (JH), and \emph{SPERIG \#885707} (VK, YL).

\section{Proofs of Theorems \ref{thm:main2} and \ref{thm:mainsecond} assuming the main ingredients} \label{subsec:proof}
In this section we prove Theorems~\ref{thm:main2} and~\ref{thm:mainsecond}, assuming the main ingredients established in the later sections.

\subsection{Proof of Theorem \ref{thm:main2}} \label{subsec:proof1}
The proof of our first main result in Theorem~\ref{thm:main2} is conducted in two main steps.

\subsubsection*{Step 1: Noncancellation of the wave trace} The first step asserts that isospectrality guarantees the preservation of rational invariant tori for an integrable metric $g_0$. We impose the generic assumption (genericity is proven in Appendix~\ref{app:NCC}) that $g_0$ satisfies a \emph{noncoincidence condition} (NCC), which posits that the lengths of geodesics with and without conjugate points do not coincide. See Definition~\ref{def:RI}.

\begin{theo}[Isospectrality and rational integrability]\label{thm:isospec implies RI}
    Let $(g_\epsilon)$ be an isospectral family of Riemannian metrics on $\T^2$ which depends continuously on $\epsilon$ in the $C^2$ norm.{Assume $g_0$ is rationally integrable}.
    \begin{enumerate}
        \item Suppose further that $g_0$ is rationally integrable and satisfies the noncoincidence condition~\eqref{eq:INCC} over some interval $I$ containing the length of periodic orbits in a given graph rational torus $\CT_0$. Then there exists a one-parameter family of rational tori $\CT_\eps \subset T^*X$ whose foliating orbits have the same length as those in $\CT_0$.
        \item If $g_0$ satisfies the noncoincidence condition~\eqref{eq:INCC} on $[0,T]$, then $g_\epsilon$ is rationally integrable up to time $T$ for all $|\epsilon|$ sufficiently small, i.e., for each $t \in \CL(\T^2,g_\eps)$ with $t \leq T$ corresponding to an $(m,n)$ periodic orbit and $mn \neq 0$, $\CL^{m,n}(\T^2,g_\eps) =\{t\}$ and the lifts to $T^*\T^2$ of periodic geodesics having length $t$ foliate a smooth, Lagrangian torus which is invariant under the geodesic flow.
    \end{enumerate}
\end{theo}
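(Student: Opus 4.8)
The plan is to read off the persistence of rational tori directly from the singularity structure of the wave trace. Write $w_\epsilon(t) = \tr \cos\bigl(t\sqrt{-\Delta_{g_\epsilon}}\bigr) = \sum_j \cos\bigl(t\lambda_j(\epsilon)\bigr)$ and let $G^t_\epsilon$ denote the geodesic flow of $g_\epsilon$ on $S^*\T^2$. By the Poisson relation and the Duistermaat--Guillemin trace formula, $\singsupp w_\epsilon \subset \{0\}\cup(\pm\CL(\T^2,g_\epsilon))$, and near a length $\ell$ (under the clean intersection hypothesis at $\ell$) the singularity of $w_\epsilon$ is a finite sum of contributions, one from each connected component $Z$ of $\Fix(G^\ell_\epsilon)\subset S^*\T^2$: a clean component of dimension $d$ contributes a conormal distribution at $\{t=\ell\}$ of order $(d-1)/2$, with leading symbol built from the symplectic volume of $Z$ and a Maslov factor determined by the conjugate points along the orbits in $Z$. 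Since $\dim S^*\T^2 = 3$, only $d\in\{1,2,3\}$ occur; the case $d = 3$ means $g_\epsilon$ is Zoll, which is excluded for $\epsilon$ small because the integrable metric $g_0$ is not Zoll and this is stable under small perturbations (for instance by KAM). Thus at any length there are only two kinds of contribution: isolated closed geodesics ($d = 1$), of \emph{integer} order $\le 0$, and rational tori ($d = 2$), of \emph{half-integer} order $1/2$, i.e.\ of the form $c\,(t-\ell+i0)^{-3/2}$ with $c\ne 0$. The structural point is that the torus contribution lives on a finer, fractional scale and cannot be produced or cancelled by isolated orbits.

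For part (1), let $\ell$ be the common length of the closed geodesics foliating the rational torus $\CT_0$ of $g_0$. These geodesics carry no conjugate points (a standard fact for geodesics lying on an invariant Lagrangian torus), so by the $I$-noncoincidence condition \emph{no} geodesic of $g_0$ of length $\ell$ has a conjugate point; consequently all $d = 2$ components of $\Fix(G^\ell_0)$ share the same Maslov index, and the noncancellation result then asserts precisely that the order-$1/2$ part of the singularity of $w_0$ at $\ell$ --- a positive combination, up to a common phase, of the symplectic volumes of the period-$\ell$ rational tori, among them $\CT_0$ --- is \emph{nonzero}: $w_0(t) = c_0\,(t-\ell+i0)^{-3/2} + (\text{conormal of order}\le 0)$ near $\ell$ with $c_0\ne 0$. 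Isospectrality gives $w_\epsilon = w_0$, so the order-$1/2$ part of $w_\epsilon$ at $\ell$ is again $c_0\ne 0$; by the dichotomy above this is impossible unless $\Fix(G^\ell_\epsilon)$ contains a smooth two-dimensional clean component $Z_\epsilon$. A compact connected surface in $S^*\T^2$ invariant under $G_\epsilon$ on which $G^\ell_\epsilon = \mathrm{Id}$ is foliated by closed geodesics (a Seifert fibration), hence is a $2$-torus; being everywhere tangent to the Hamiltonian direction inside the energy hypersurface, on which $\omega$ has one-dimensional kernel spanned by that direction, $Z_\epsilon$ is isotropic, hence Lagrangian in $T^*\T^2$. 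Matching rotation vectors --- continuous in $\epsilon$ and rational on $\CT_0$ --- identifies the common period of the foliating geodesics as exactly $\ell$, so $Z_\epsilon =: \CT_\epsilon$ is a rational torus of $g_\epsilon$ whose orbits have the same length as those of $\CT_0$. Finally, as $\epsilon\to 0$ the tori $\CT_\epsilon$ accumulate on a $G_0$-invariant period-$\ell$ Lagrangian torus, which by the genericity of $g_0$ together with the noncoincidence condition must be $\CT_0$; a normal form of $G_0$ near $\CT_0$ then upgrades this to a continuous (indeed smooth) one-parameter family. This proves (1).

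Part (2) is (1) run simultaneously at every $t\in(0,T]$ that is the common period of an $(m,n)$-rational torus of $g_0$ with $mn\ne 0$, which exists since $g_0$ is rationally integrable up to time $T$; the noncoincidence condition on $[0,T]$ makes the noncancellation result applicable at each such $t$. Isospectrality then produces, for every small $\epsilon$ and every such $t$, a $G_\epsilon$-invariant Lagrangian torus of $g_\epsilon$ foliated by closed geodesics of length $t$, of rotation type $(m,n)$ (again by matching rotation vectors). Conversely, $\CL(\T^2,g_\epsilon)\cap(0,T] = \CL(\T^2,g_0)\cap(0,T]$ since the wave traces agree; and a second $(m,n)$-rational torus of $g_\epsilon$ of a different period would, by continuity, lie near the $(m,n)$-torus of $g_0$ and hence force an order-$1/2$ singularity of $w_\epsilon = w_0$ at a length that --- by the genericity of $g_0$, i.e.\ the absence of coincidences among the periods of its rational tori --- is the period of no rational torus of $g_0$, a contradiction; a short additional argument using the local normal form near $\CT_\epsilon$ rules out spurious isolated $(m,n)$-geodesics. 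Hence $\CL^{m,n}(\T^2,g_\epsilon) = \{t\}$ and the lifts of the period-$t$ geodesics foliate $\CT_\epsilon$, which is exactly the statement that $g_\epsilon$ is rationally integrable up to time $T$.

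The step I expect to be the main obstacle is the geometric conversion in the second paragraph: turning the bare analytic fact that $w_\epsilon$ has an order-$1/2$ singularity at $\ell$ with a prescribed leading coefficient into the existence of an honest smooth two-dimensional clean component of $\Fix(G^\ell_\epsilon)$. For the perturbed metric one must control, or circumvent, the clean intersection hypothesis and exclude the pathologies that can also generate an order-$1/2$ singularity --- a degenerate isolated closed geodesic whose non-clean contribution reaches the torus's order, a fractal fixed-point set of Aubry--Mather type, or closed geodesics with conjugate points whose lengths are only approximately $\ell$ --- and this must be done uniformly for all sufficiently small $\epsilon$ (and, in part (2), on all of $[0,T]$ at once). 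The noncancellation result, which we take as an input, does the corresponding analysis at $\epsilon = 0$; carrying its conclusions across the deformation is where the work lies.
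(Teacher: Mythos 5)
There is a genuine gap, and it sits exactly where you flag it in your last paragraph: the conversion of the analytic fact ``$w_\eps$ has a nonzero order-$1/2$ singularity at $\ell$'' into the geometric conclusion ``$\Fix(G^\ell_\eps)$ contains a smooth two-dimensional clean component.'' The dichotomy you rely on --- isolated closed geodesics contribute integer-order conormal singularities, tori contribute half-integer order --- is a consequence of the Duistermaat--Guillemin formula \emph{under the clean intersection hypothesis}, and there is no reason that hypothesis holds for $g_\eps$. For a perturbed metric the set of $(m,n)$-periodic points can be a non-clean, even non-manifold, set (Aubry--Mather type phenomena), a degenerate isolated geodesic can produce a fractional-order singularity of any strength up to the torus order, and the relevant lengths can split into a cluster near $\ell$ rather than staying at $\ell$. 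So the presence of a prescribed order-$1/2$ leading term in $w_\eps$ near $\ell$ does not, by itself, force the existence of an invariant surface on which $G^\ell_\eps=\mathrm{Id}$. Deferring this to ``the noncancellation result'' does not close the gap, because the noncancellation input you cite is a statement about the wave trace of the \emph{unperturbed} integrable metric; the whole difficulty is propagating a geometric conclusion to $g_\eps$, for which no regularity of the fixed-point set is known a priori. (The paper is explicit that its trace formula is designed to make \emph{no} cleanness, isolation, or nondegeneracy assumptions.)

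The paper's route is genuinely different and is built to sidestep exactly this obstruction. First, Theorem~\ref{thm:laplacetolength} shows only that the \emph{rotational length spectrum} is constant in $\eps$ on $I$; this is proved via an explicit off-diagonal parametrix whose phase is the $\gamma$-loop function, reducing everything to a two-dimensional oscillatory integral $\int e^{i\lambda\psi_\gamma^\eps(x)}A(x)\,\dif x$ to which Soga's Theorem~\ref{SogasTheorem} applies with no hypothesis on the critical-point structure of $\psi_\gamma^\eps$. Second --- and this is the step your proposal is missing a substitute for --- the existence of the loop functions $\psi_{m,n}^\eps$ for all small $\eps$ is a purely \emph{dynamical} fact (Corollary~\ref{cor:loop fn}, via the twist property of Proposition~\ref{prop:geom RT}(3) and the implicit function theorem), independent of any wave-trace analysis. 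Since $\max_x\psi_{m,n}^\eps$ and $\min_x\psi_{m,n}^\eps$ are always lengths of closed geodesics and the rotational length spectrum is locally constant in $\eps$, continuity forces $\max=\min$, i.e.\ $\psi_{m,n}^\eps\equiv\mathrm{const}$; then \emph{every} loop is closed and the graph of $d_x\psi_{m,n}^\eps$ is the desired rational torus, with Proposition~\ref{prop:geom RT}(1) giving $\CL^{m,n}(\T^2,g_\eps)=\{t\}$. In short: the paper deduces torus persistence from the constancy of a globally defined, smooth loop function, not from the order of a wave-trace singularity, and this is what allows it to avoid the cleanness issue that your argument would need to resolve.
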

Theorem~\ref{thm:isospec implies RI} follows from a noncancellation result on the wave trace, which shows that under the local noncoincidence condition~\eqref{eq:INCC}, Laplace isospectrality implies length isospectrality with no assumption on the initial metric being Liouville; see Theorem~\ref{thm:laplacetolength} and the discussion immediately after it. Integrability then follows easily from length isospectrality. For each $x \in \T^2$ and each homology class $[\gamma] \in H_1(\T^2,\Z)$ corresponding to orbits \textit{without} conjugate points, we have a function  $\psi_\gamma(x)$ -- called the $\gamma$-loop function -- which returns the length of the \textit{unique} geodesic loop based at $x$ and belonging to the homology class $[\gamma]$ (see Corollary~\ref{cor:loop fn} and Definition~\ref{def:loop functions}). Integrability at $\eps = 0$ implies that $\psi_\gamma$ is constant on $\T^2$, and length isospectrality preserves this constancy for small $\eps > 0$, at least for $[\gamma]$ corresponding to graph rational tori (no conjugate points). As our definition of rational integrability concerns only graph tori, we conclude {rational} integrability.

\medskip

To show length isospectrality, we construct an explicit microlocal parametrix for the wave propagator $\cos t \sqrt{-\Delta}$ on $\T^2$ when the underlying Riemannian metric is a small perturbation of an integrable one (see Theorem~\ref{parametrix}). To do this, we follow a WKB algorithm with the machinery of Fourier integral operators. This parametrix is a closed manifold analogue of the ones which appeared in the fourth author's previous works \cite{Vig21} and \cite{Vig22} and uses off-diagonal extensions of the ``$\gamma$-loop functions'' (see Definition~\ref{def:loop functions}), which are analogous to those of the $q$-loop functions used in the study of convex billiards. By integrating the kernel of our parametrix over the diagonal, we then obtain a Marvizi-Melrose style trace formula (see Theorem~\ref{MM style parametrix}) which is versatile enough that it can be adapted to potentially irregularly distributed clusters in the length spectrum without requiring the individual lengths in the relevant cluster to be isolated or nondegenerate. It is also related to a parametrix constructed by Popov in \cite{PopovDegenerate}, although we avoid the use of fiber variables in our Fourier integrals so as to escape the risk of cancellations posed by nontrivial Maslov indices. Armed with this parametrix, we then follow the approach of Hezari-Zelditch (\cite{HeZe19}) by using a theorem of Soga on the decay (or lack thereof) of oscillatory integrals applied to the regularized resolvent trace (see~\eqref{resolvent}) to deduce length isospectrality. For more details, see Theorem~\ref{thm:laplacetolength} and its proof in Section~\ref{subsec:laplacetolength}. The first part of the rest of this paper is dedicated to the proof of Theorem~\ref{thm:isospec implies RI}, which relies on dynamical and microlocal arguments in Sections~\ref{sec:dynamics} and~\ref{sec:trace} respectively.

\subsubsection*{Step 2: A second order variational approach} Having Theorem~\ref{thm:isospec implies RI} at hand, we need to show that the preservation of rational tori and the preservation of lengths forces the perturbation to be trivial, $U \equiv 0$. In the following theorem, we show that the preservation of a \emph{single} rational torus is already sufficient for this conclusion, assuming that the perturbation depends on $\epsilon$ only \textit{linearly}, that is $U(\epsilon, x_1, x_2) = \epsilon U(x_1, x_2)$.

{\begin{theo}[Rigidity of one rational torus]\label{thm:RT rig}
     Let $(g_\epsilon)$ be a conformal family of Riemannian metrics on $\T^2$ of the form
     \begin{equation*}
         \dif s_\epsilon^2
         =
         \bigl(\Lambda(x_1,x_2)+\epsilon U(x_1,x_2)\bigr)
         (\dif x_1^2+\dif x_2^2),
     \end{equation*}
     where $\Lambda,U:\T^2\to\R$ are smooth. Assume that, for all sufficiently small $\epsilon$, the metric $g_\epsilon$ admits a graph rational torus of a fixed homology class, and that the common length of the closed geodesics in this torus is independent of $\epsilon$. Then $U\equiv0$.
\end{theo}}

The basic idea underlying the proof of Theorem~\ref{thm:RT rig} is to consider the \emph{energy} $E_\epsilon(\gamma_\epsilon)$ of the unique closed geodesic $\gamma_\epsilon$ in the preserved rational torus which starts and ends at a fixed $x\in\T^2$, as a function of $\epsilon$:
\begin{equation*}
 \epsilon\mapsto    E_\epsilon (\gamma_\epsilon) 
\end{equation*}
The isospectrality forces the above function to be a constant.
Since $\gamma_\epsilon$ is a geodesic of $g_\epsilon$, it is a critical point of the energy functional $E_\epsilon$, i.e., we have $\dif_{\gamma_\epsilon} E_\epsilon \equiv 0$.
Expanding the conditions $E_\epsilon(\gamma_\epsilon) = \text{const}$ and $\dif_{\gamma_\epsilon} E_\epsilon \equiv 0$ in powers of $\epsilon$ and combining the expansions (see \eqref{eq:E expansion}--\eqref{eq:dE expansion}), we obtain a second order condition:
\begin{equation}\label{eq:hess condition}
 \dif^2_{\gamma^{(0)}}E^{(0)}(\gamma^{(1)}, \gamma^{(1)}) = 0,
\end{equation}
where we have used the notation 
\begin{equation*}
    \gamma_\epsilon = \gamma^{(0)} + \epsilon \gamma^{(1)} + \epsilon^2 \gamma^{(2)} + \CO(\epsilon^3), \qquad E_\epsilon = E^{(0)} + \epsilon E^{(1)}.
\end{equation*}
This is the \emph{only}, but crucial, use of the linearity assumption of the deformation, because a general deformation will produce a nontrivial term on the left side of \eqref{eq:hess condition} involving $E^{(2)}$.
On the other hand, by minimality of $\gamma^{(0)}$ established in Proposition~\ref{prop:geom RT}, the Hessian $\dif^2_{\gamma^{(0)}}E^{(0)}$ is positive semi-definite.\footnote{The relevant closed geodesics are \emph{globally} minimizing in their homology class.} 
This in turn forces $\dif_{\gamma^{(0)}} E^{(1)} \equiv 0$. 
Comparing the Euler-Lagrange equations resulting from $\dif_{\gamma^{(0)}} E^{(0)} \equiv 0$ and $\dif_{\gamma^{(0)}} E^{(1)} \equiv 0$, we conclude that $U = 0$ along the unperturbed geodesic $\gamma^{(0)}$. 

\medskip

Finally, we use the assumed existence of a rational torus to find, for each $x \in \T^2$, a family of geodesics $\gamma_\eps$ such that $\gamma^{(0)}$ contains $x$, and apply the preceding argument to this family to deduce $U(x) = 0$. Varying $x$, we obtain $U \equiv 0$ on $\T^2$. The full proof of Theorem~\ref{thm:RT rig} is given in Section~\ref{sec:secondorder}.

\medskip 

Now, combining Theorems~\ref{thm:isospec implies RI} and~\ref{thm:RT rig}, we obtain our first main result, Theorem~\ref{thm:main2}.

\begin{proof}[Proof of Theorem~\ref{thm:main2}] 
Choose a graph rational invariant torus of \(g_0\) whose common length lies in a (finite) interval \(I\) on which the noncoincidence condition holds.\footnote{By Proposition~\ref{prop:NCC gen}, the $I$-noncoincidence condition holds for an open and dense set of $g_0$, which is precisely the open and dense set of $f_1$ and $f_2$ alluded to in Theorem~\ref{thm:main2}.} By Theorem~\ref{thm:isospec implies RI}, Laplace isospectrality preserves the corresponding graph rational torus and its length for all sufficiently small \(\epsilon\). Applying Theorem~\ref{thm:RT rig} with \(\Lambda=1+f_1+f_2\), we conclude \(U\equiv0\).
\end{proof}

\subsection{Proof of Theorem \ref{thm:mainsecond}} \label{subsec:proof2}
The proof of our second main result, Theorem~\ref{thm:mainsecond}, is conducted in three main steps.

\subsubsection*{Step 1: Noncancellation of the wave trace} As the first step, we again apply Theorem \ref{thm:isospec implies RI}, showing that isospectrality guarantees the preservation of rational invariant tori for the Liouville metric $g_0$. 
  
\subsubsection*{Step 2: Rationally integrable deformations of Liouville metrics}
As the second step, we use a variant of a result from one of us in \cite{henheik2025deformational}, showing that rationally integrable deformations of Liouville metrics are again Liouville metrics. 
{
\begin{theo}[Integrable deformations of Liouville metrics are Liouville metrics]\label{thm:RI implies liouv}
    Let $(g_\epsilon)$ be a family of Riemannian metrics on $\T^2$ as in Theorem \ref{thm:mainsecond} with $f_1, f_2$ belonging to a set of generic analytic functions and $U$ being a trigonometric polynomial as described above Theorem \ref{thm:mainsecond}. Then: 
\begin{itemize}
    \item[(i)]     Suppose that $g_\epsilon$ is rationally integrable up to time $T = \infty$ and the lengths of all graph rational invariant tori are preserved for every sufficiently small $\epsilon$. 
    Then, the perturbation $U(\epsilon, x_1, x_2)$ is \emph{separable}, i.e.~it satisfies \eqref{eq:separablemain}. In particular, $(g_\epsilon)$ is a family of Liouville metrics. 
    \item[(ii)]     Moreover, for any fixed $n \in \N$, there exists $T_n < \infty$ such that, if $(g_\epsilon)$ is rationally integrable up to time $T_n$ and the lengths of all graph rational invariant tori with length  at most $T_n$ are preserved for every sufficiently small $\epsilon$. Then there exist a constant $C_n> 0$ and trigonometric polynomials $U_i^{(j)} \in C^\infty(\T)$, $i \in\{1,2\}$, $j \in \{1,...,n\}$, such that 
    \begin{equation} \label{eq:orderbyorderstatement}
        \sup_{(x_1, x_2) \in \T^2} \left|U(\epsilon, x_1, x_2) - \sum_{j=1}^n \epsilon^j \big( U_1^{(j)}(x_1) + U_2^{(j)}(x_j)\big)\right| \le C_n \epsilon^{n+1} \,. 
    \end{equation}
    
\end{itemize}
\end{theo}
}
The basic mechanism underlying the proof of Theorem \ref{thm:RI implies liouv} relies on the idea from \cite{henheik2025deformational}, that the preservation of the $(m_1, m_2)$ rational torus leads to the annihilation of the Fourier coefficients $U_{k_1, k_2}$ with indices $(k_1, k_2) \in \{(m_1, m_2)\}^\perp$ of 
\begin{equation*}
    U(x_1, x_2) = \sum_{(k_1, k_2) \in \Z^2} U_{k_1, k_2} \mathrm{e}^{\mathrm{i} 2 \pi (k_1 x_1 + k_2 x_2)} \,. 
\end{equation*}
More precisely, we show non-degeneracy of a (possibly infinite) system of linear equations for the non-zero Fourier coefficients of $U$. This is obtained by considering the first order variations of the (by assumption constant) length functionals for different rational invariant tori, leading to relations of the form
\begin{equation*}
    \int_{\gamma_0} U \dif s_0 = 0
\end{equation*}
where $\gamma_0$ is an unperturbed geodesic with line element $\dif s_0^2$.

\medskip

The proof of Theorem \ref{thm:RI implies liouv}, relying on the arguments of \cite{henheik2025deformational}, is given in Section~\ref{sec:RIliouv}. 

\subsubsection*{Step 3: Length isospectral deformations within the class of Liouville metrics.} As the third step, we show that a smooth length-isospectral family of Liouville metrics must be a family of rearrangements. The proof of Theorem \ref{thm:liourearrange} is given in Section \ref{sec:length-liouville}. 

\begin{theo}[Length isospectral families of Liouville metrics] \label{thm:liourearrange}
    Let $(g_\epsilon)$ be a family of Liouville metrics with line element
\begin{equation}
    \dif s_\epsilon^2
= \bigl(1+f_1(\epsilon, x_1)+f_2(\epsilon, x_2)\bigr)\,(\dif x_1^2+\dif x_2^2)
\end{equation}
    and $f_1$, $f_2$ depending smoothly on $\epsilon$, $x_1$ and $x_2$. 
    Assume that the length of the geodesics of each rational torus of $g_\epsilon$ is independent of $\epsilon$. Then, for each $\epsilon$, there exists $c_\epsilon$ such that $f_1(\epsilon, \cdot)$ is a rearrangement of $f_1(0, \cdot) + c_\epsilon$ and $f_2(\epsilon, \cdot)$ is a rearrangement of $f_2(0, \cdot) - c_\epsilon$. 
\end{theo}

Theorem~\ref{thm:liourearrange} is a consequence of the more general Theorem~\ref{thm:rearrangement-rotMLS} in Section~\ref{sec:length-liouville}, which asserts that, roughly speaking, the \emph{marked length spectrum} of a Liouville metric determines the metric up to rearrangements.

\medskip

As pointed out in the introductory section, our results are similar in spirit to the prior results of Abbondandolo-Mazzucchelli~\cite{AbbondandoloMazzucchelli2026}. 
In the cited work, the authors proved, among other results, that the marked length spectrum determines the sphere of revolution with a single equator up to a rearrangement of the \emph{profile function}.
The authors of~\cite{AbbondandoloMazzucchelli2026} consider a symplectic twist map given by a return map associated with the unique equator, and use its marked spectral data to determine Mather's beta function which, in turn, determines the profile function via Abel's transform.

\medskip

The method of~\cite{AbbondandoloMazzucchelli2026} does not immediately extend to the setting of Liouville metrics, even in the rather similar setting with $f_2 = 0$.
By the method of~\cite{AbbondandoloMazzucchelli2026}, the spectral data of geodesics that oscillate around a fixed elliptic closed geodesic (corresponding to a local maximum of $f_1$) can, in principle, determine the metric over a subregion of $\T^2$ bounded by certain hyperbolic geodesics (corresponding to a local minimum of $f_1$).
However, unless $f_1$ has only two critical points, one needs to partition the length spectrum appropriately according to the oscillation intervals of such geodesics. But this information is generally unavailable from the usual definition of the length spectrum for $\T^2$ with or without marking.

\medskip

We shall modify the idea of~\cite{AbbondandoloMazzucchelli2026} and adopt a more ``global'' approach which uses only geodesics of homology classes $(m,n)$ with $mn \neq 0$.
In particular, the issues with multiple critical points and with nonconstant $f_1$ and $f_2$ can be handled in a unified way.

\medskip

The key observation is that, via a separable Hamiltonian obtained from the \emph{Maupertuis principle} (Section~\ref{sec:dynamics}), the marked length-spectral data of a Liouville metric $(1+f_1(x_1)+f_2(x_2))(\dif x_1^2+\dif x_2^2)$ can be encoded by a real-analytically embedded curve $\Gamma_{f_1, f_2} \subset (\R_{>0})^2$ that is given by the image of the map
\begin{equation}\label{eq:Gam curve intro}
    (-\min f_1, 1 + \min f_2) \ni \ e \longmapsto \bigg(\int_{\T}\sqrt{e + f_1(x)} \dif x, \int_{\T}\sqrt{ 1- e + f_2(x)} \dif x \bigg) \ \in \R^2
\end{equation}
It is straightforward to see that individual rearrangements of $f_1$ and $f_2$ do not change the curve $\Gamma_{f_1, f_2}$, and consequently, are always isospectral deformations.
Conversely, if $(1+\tilde f_1(x_1)+\tilde f_2(x_2))(\dif x_1^2+\dif x_2^2)$ is another, marked-length-isospectral, Liouville metric, then $\Gamma_{f_1, f_2} = \Gamma_{\tilde f_1, \tilde f_2}$.
In particular, we show that the analogous map of~\eqref{eq:Gam curve intro} with $f_j$ replaced by $\wt f_j$ must coincide with the map~\eqref{eq:Gam curve intro} up to precomposing a real-analytic reparametrization $\varphi:\R\to \R$.
A closer analysis of $\varphi$ shows that $z \mapsto \varphi(z)/z$ can be analytically extended to $\C$ as a bounded function, forcing it to be constant, and then the asymptotics at $|z| \to \infty$ shows $\varphi(z)=z$.
It then follows from an application of Laplace transform that the function $\tilde f_j$ rearranges $f_j$ up to an additive constant for $j = 1,2$.
The detailed proof is given in Section~\ref{sec:length-liouville}. 

\medskip

Now, combining Theorem \ref{thm:isospec implies RI} with Theorems \ref{thm:RI implies liouv}--\ref{thm:liourearrange}, we obtain Theorem~\ref{thm:mainsecond}. 

\begin{proof}[Proof of Theorem \ref{thm:mainsecond}]
     {As in the proof of Theorem \ref{thm:main2}, by Theorem~\ref{thm:isospec implies RI}, Laplace isospectrality implies integrability and preservation of lengths on any interval $I$ for which the noncoincidence condition~\eqref{eq:INCC} holds.\footnote{By Proposition~\ref{prop:NCC gen}, the noncoincidence condition holds for a generic set of $g_0$ for $T = \infty$. The intersection of this set with the generic set of $(f_1, f_2)$ in Theorem \ref{thm:RI implies liouv} is precisely the generic set alluded to in Theorem \ref{thm:mainsecond}.} Now, applying Theorem \ref{thm:RI implies liouv}~(ii) order by order, yields the separability of $U$, leaving us with a family of Liouville metrics with preserved lengths of rational invariant tori. By application of Theorem~\ref{thm:liourearrange} we then conclude the stated componentwise rearrangement conclusion, claimed in Theorem~\ref{thm:mainsecond}.}
\end{proof}

\section{Background and related results}\label{sec:background}

Inverse spectral problems arise in many contexts. Two of the most well studied are:
\begin{enumerate}
	\item \label{Kac} \emph{Can you hear the shape of a drum?} This question, posed by Mark Kac in 1966 \cite{kac1966can}, asks whether or not one can recover the underlying geometry of a Riemannian manifold (drum) $(X,g)$, from knowledge of its ostensibly audible Laplace spectrum:\footnote{Here, $B$ is a boundary operator encoding Dirichlet, Neumann, Robin or mixed boundary conditions. If the manifold is closed, there is of course no boundary operator needed.}
	\begin{align*}
		\begin{cases}
			- \Delta_g u = \lambda^2 u,\\
			Bu|_{\d X} = 0,
		\end{cases}
	\end{align*}

	\item \label{inverselsp} Can you recover the geometry of a manifold (with or without boundary) from knowledge of the lengths of closed geodesics (which we denote by $\CL(X,g)$)?
\end{enumerate}

\medskip 

Each of these questions has a number of close relatives. For example, recovering a potential energy function from knowledge of its associated Schr\"odinger spectrum is formally analogous to \eqref{Kac}. As another example, accompanying the length spectrum in \eqref{inverselsp} is the so called \textit{marked} length spectrum, which encodes the lengths of periodic orbits together with their topological data (e.g., winding number or homotopy class).

\medskip

In physics, when one solves the wave or Schr\"odinger equations to obtain the wave functions for a quantum particle, separation of variables leads one to study eigenvalues and eigenfunctions of the Laplacian. For this reason, the class of inverse problems which resemble \eqref{Kac} are said to be ``quantum'' inverse problems. The famous Bohr correspondence principle asserts that in the high-energy regime (large eigenvalue or small Planck's constant), the behavior of quantum wave functions should resemble that of the underlying classical dynamics, i.e., solutions to Hamilton's equations for the relevant classical Hamiltonian. Inverse problems pertaining to the length spectrum, as in \eqref{inverselsp}, are hence said to be ``classical.''

\medskip

In the remainder of this section, we discuss the necessary background in addition to several related results on quantum and classical inverse spectral problems. For further details and additional references, we refer the reader to the surveys \cite{ZelditchSurvey2, datchev2011inverse}. 

\subsection{The Poisson relation}

There is a close mathematical relation between \eqref{Kac} and \eqref{inverselsp} of semiclassical nature, which, to some extent, formalizes the Bohr correspondence principle. This is most evident through the \textit{Poisson relation}, which dictates that

\begin{align}\label{PoissonRelation}
	\text{SingSupp} \,\,\tr \left(\cos t \sqrt{- \Delta}\right) \subset \pm \overline{\CL(X,g)} \cup \{0\}.
\end{align}

The left-hand side of \eqref{PoissonRelation} is the distributional trace of the wave group
\begin{align}\label{eq:wave trace}
	w(t): = \sum \cos t \lambda_j \in \mathcal{D}'(\R),
\end{align}
where $\lambda_j$ are eigenvalues of the Laplacian. This is an entirely spectral quantity, while the length spectrum on the right-hand side of~\eqref{PoissonRelation} is purely geometric.

\medskip

To tackle the inverse Laplace spectral problem, one often uses the Poisson relation~\eqref{PoissonRelation} together with its more refined counterpart, the Poisson summation formula, which we now describe. Let $L \in \CL(X,g)$ be isolated and assume that all periodic geodesics $\gamma$ of length $L$ are nondegenerate in the sense that $\det(\Id - P_\gamma) \neq 0$, where $P_\gamma$ is the linearized Poincar\'e map associated to $\gamma$. Denote by $\wh \rho(t)$ a compactly supported test function which is $1$ in a neighborhood of $L$ and contains no other lengths of closed geodesics in its support. The Poisson summation formula then reads
\begin{align}\label{eq: PSF}
	\int_0^\infty e^{i t \lambda} \wh \rho(t) w(t) \dif t \sim \sum_{\gamma: \text{length}(\gamma) = L} 
	\frac{L_\gamma^\# i^{\sigma_\gamma} }{|\det(I-P_\gamma)|^{1/2}} \sum_{j = 0}^\infty B_{\gamma, j} \lambda^{-j} \quad \text{as}\,\, \lambda \to \infty,
\end{align}
where $\sigma_\gamma$ is the Morse index of $\gamma$ and $L_\gamma^\#$ is its primitive period. The coefficients $B_{\gamma,j}$ contain geometric information about the underlying manifold in the form of integrals of polynomials in the metric and its derivatives over each closed geodesic of length $L$. The Poisson summation formula~\eqref{eq: PSF} is a generalization of the Selberg trace formula, which was developed for hyperbolic surfaces in 1956 (\cite{Sel56}). It was followed by the Gutzwiller trace formula in the physics literature, which relates the eigenvalues of a Schr\"odinger operator to the underlying classical Hamiltonian dynamics (\cite{Gutzwiller}). Around the same time, Balian and Bloch \cite{BB1, BB2, BB3} treat the case of bounded domains. In the mathematical literature, the trace formula was first proved for Riemannian manifolds by Colin de Verdi\`ere (\cite{CdVsllgp1}, \cite{CdVsllgp2}), Chazarain (\cite{Chazarain1974}), and Duistermaat-Guillemin (\cite{DuGu75}). It was later extended by Chazarain \cite{Ch76} and Guillemin-Melrose (\cite{GuMe79a}) to the case of manifolds with boundary.

\medskip

The invariants $B_{\gamma, j}$ in \eqref{eq: PSF} are algebraically equivalent to the \textit{wave invariants}, which are the coefficients of a direct expansion of $w(t)$, as opposed to its Fourier transform. If all geodesics are isolated and nondegenerate, the wave trace is asymptotic to
\begin{align*}
	 w(t) \sim \begin{cases}
	 	\sum_{k = 0}^\infty \Re \left(a_{0, k-d} (t +i0)^{k - d} \right) & \text{near}\,\, t= 0,\\
	 	\sum_{k = 0}^\infty \Re \left(a_{L, k-1} (t - L +i0)^{k - 1} \log(t - L + i 0) \right) & \text{near}\,\, t= L \in \CL(X,g) \backslash \{0\},
	 \end{cases}
\end{align*}
modulo $C^\infty$ functions of $t$, with each coefficient
\begin{align}\label{eq: sum over orbits}
	a_{L, j} = \sum_{\text{length}(\gamma) = L} a_{\gamma, j}
\end{align}
accounting for the contributions of different geodesics when the length spectrum is multiple. We note that there are more general formulas when periodic orbits are degenerate but belong to a clean submanifold of fixed points for the geodesic flow.

\subsection{Related results} Generically, the wave invariants $a_{L,j}$ are nonzero and one can read off the length spectrum from the Laplace spectrum. If the length spectrum is multiple, different periodic orbits of the same length could have identical wave invariants except for Maslov indices which differ by $2 \mod 4$, in which case their contributions would cancel \cite{CdVsllgp1}, \cite{DuGu75}, \cite{KV24}.
\begin{figure}
    \centering
    \includegraphics[width=0.75\linewidth]{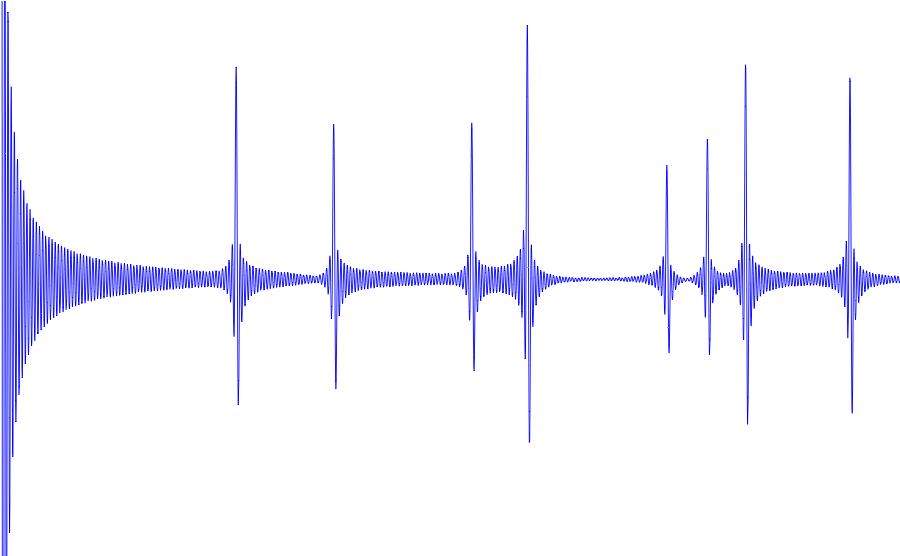}
    \caption{An approximation of the wave trace on $\R^2/\Z^2$ with the Euclidean metric, made with finitely many eigenvalues.}
    \label{fig:placeholder}
\end{figure}
One important case in which there is no risk of cancellation is in the setting of negative curvature. Since there are no conjugate points, all Morse indices are trivial and Laplace isospectral domains must also be length isospectral. Guillemin and Kazhdan exploited this relation to show deformational spectral rigidity of negatively curved surfaces (\cite{GK1}, \cite{GK2}). If the length spectrum is constant along a deformation $(X,g_\eps)$, then
\begin{align}\label{Xray}
	\int_{\gamma} \dot{g} \, \dif s = 0
\end{align}
for each closed geodesic $\gamma$, where the first variation $\dot{g} = \frac{d}{d \eps} \big|_{\eps = 0} g_\eps$ of the metric is a symmetric $2$-tensor. Rigidity then follows from injectivity of the so called \textit{X-ray transform}~\eqref{Xray}. A discrete analogue of this was used in \cite{KaDSWe17} with the X-ray transform replaced by the linearized isospectral operator. In general, existence of conjugate points complicates the analysis of both the wave trace and the X-ray transform, often jeopardizing its injectivity. For example, on the sphere, any odd function is in the kernel of the X-ray transform. For integrable systems, there are almost always conjugate points.

\medskip

In the papers \cite{KKV} and \cite{KV24}, it was shown that the length spectrum can indeed be obscured by the wave trace if distinct orbits conspire against one another to cause cancellations in the sum~\eqref{eq: sum over orbits}. We also mention the work of Schüth \cite{Schueth}, who constructed nonisometric two step nilmanifolds with identical Laplace spectra, but one having integrable geodesic flow and the other not. They do, however, have the same length spectrum, when multiplicities are ignored. She also produces a continuous family of isospectral but nonisometric two step nilmanifolds in \cite{Schueth2}. Despite these {caveats and} limitations, there has been much recent progress on quantum inverse spectral problems. Hezari and Zelditch recently improved on the results of Kac by showing that ellipses of small eccentricity are spectrally determined among all smooth planar domains (\cite{HeZe19}). Additionally, there are nonexplicit examples discovered by Marvizi-Melrose (\cite{MM}) and Watanabe (\cite{Watanabe1}, \cite{Watanabe2}). Zelditch has also shown that generic $\Z_2$ symmetric analytic domains are spectrally determined within that class (\cite{Zel09}). This was later extended to generic centrally symmetric analytic domains (\cite{HezariZelditchCentrallySymmetric}).

\medskip

The interior problem was complemented by the inverse resonance problem for obstacle scattering in \cite{Zel0res}. In \cite{MelroseIsospectralCompactness, POS1, POS2}, $C^\infty$-compactness of Laplace isospectral sets was shown, while spectral rigidity was shown in \cite{HeZe12}, \cite{Zelditch5}, \cite{Vig21}, \cite{PopTop12}, \cite{PopovTopalovKAM}, and \cite{HezariZelditchEigenfunctionAsymptotics}. In the boundaryless case, Zelditch also showed that analytic metrics of revolution on $S^2$ are generically spectrally determined (\cite{Zelditch2}), which improved on prior results assuming mirror symmetry (see \cite{BruningHeintze}). There are several other inverse spectral results which instead consider the \textit{joint} spectrum of $-\Delta$ and the generator of rotational symmetry (\cite{BerardSOR}, \cite{Gurarie}, \cite{KacReflection}). 

\medskip

On the dynamical side, as in the Laplace spectral setting, there are four settings: integrable or chaotic, each either with or without boundary. In the setting of convex billiard tables in the plane, the second author together with Sorrentino showed in \cite{KaSo16} that ellipses are isolated among integrable billiard tables, verifying a version of the famous \emph{Birkhoff-Poritsky conjecture}; see \cite{birkhoff1927periodic, Poritsky} for the classical references, and also \cite{KaAvDS16}, \cite{koval2023local}, \cite{KovalGevrey},  \cite{KaDSWe17}, and \cite{KaSo16}. In \cite{KaDSWe17, KaSo16}, the authors show dynamical spectral rigidity of ellipses and nearly circular domains. In \cite{BialyMironov}, Bialy and Mironov show that the Birkhoff-Poritsky conjecture holds among centrally symmetric domains for which the interval of rotation numbers $(0,1/4]$ corresponds to a foliation by caustics near the boundary. In the context of the \textit{marked} length spectrum, the fourth author demonstrated $C^\infty$ compactness of isospectral sets in \cite{VigBetaCompactness}. {More recently, Abbondandolo and Mazzucchelli \cite{AbbondandoloMazzucchelli2026} extended the work of Zelditch to the dynamical setting and proved that metrics of revolution on the 2-sphere with identical marked length spectra have geodesic flows which are smoothly conjugate. They also show that each isospectral class has a unique mirror symmetric representative which is uniquely determined among all such metrics.} For the chaotic boundaryless case, we refer to the celebrated works of Otal \cite{otal1990spectre} and Croke \cite{croke1990rigidity}, who proved marked length spectral rigidity for manifolds with non-positive curvature.

\medskip

In the boundaryless case, there has been much recent work on the dynamical \emph{spectral} rigidity of negatively curved metrics (see, e.g., \cite{GuillarmouLefeuvre}), but comparatively little in the integrable setting. One restrictive class of examples was presented by Zelditch in \cite{Zelditch2}, who showed that under suitable nondegeneracy conditions, the wave trace invariants give the so called ``quantum Birkhoff normal form,'' which determines (but is not determined by) the \textit{classical} Birkhoff normal form of an elliptic geodesic on a surface of revolution. Under a mirror symmetry assumption, the classical Birkhoff normal form in turn determines the germ of the profile function there; in general, it only determines the even part. See also \cite{BruningHeintze} for an alternative proof and \cite{CdVbilliards} for an analogous result in the billiards setting. However, Zelditch explicitly excludes Liouville tori from his study and instead, considers Laplace (as opposed to length) isospectrality of a more general class of (not necessarily mirror symmetric) metrics of revolution on $S^2$. 

\medskip

In view of the above mentioned results, the first main result of our paper is in some ways, a closed manifold analogue of \cite{KaDSWe17}, although we only {(but potentially necessarily -- cf. Example~\ref{ex:two rivers})} consider linear in $\epsilon$ deformations in Theorem~\ref{thm:main2} and employ second (as opposed to first) variational methods to conclude rigidity from the preservation of a single rational torus, without utilizing the entire length spectrum. On the dynamical side, we also remark on an analogy between our proof and the recent work \cite{ArnaudMassettiSorrentino2023} by Arnaud-Massetti-Sorrentino, who prove (see \cite[Corollary~1.15]{ArnaudMassettiSorrentino2023}) dynamical rigidity of integrable \emph{analytic} twist maps, assuming that only \emph{one} rational torus is preserved. Notably, our result does not require analyticity. We also mention that, within the proof of our second main result, Theorem \ref{thm:mainsecond}, in particular Theorem \ref{thm:liourearrange}, we show that there exist non-trivial families of Liouville metrics, obtained by rearrangement, having the same marked length spectrum (for rotation numbers with $nm \neq 0$, i.e.~for graph rational invariant tori). This contrasts the result of Otal \cite{otal1990spectre} and Croke \cite{croke1990rigidity} on marked length spectral rigidity for negatively curved manifolds.

\subsection{Liouville metrics on $\T^2$}
The simplest manifold featuring geodesic flow with nontrivial behavior is the two-dimensional torus $\T^2$. Analogous to the above mentioned Birkhoff conjecture, it is a folklore belief that the only metrics on $\T^2$ which have \emph{integrable} geodesic flow are Liouville metrics with line-element of the form~\eqref{eq:deform}. Examples of integrable geodesic flows on manifolds besides tori can be found in \cite{PaternainSpatzier1990}.

\medskip

Although the correctness of the folklore belief appeared plausible for a long time, there is rather strong evidence for it being in fact \emph{false} in its very general form: In \cite{CorsiKaloshin2018}, the second author of the present paper together with Corsi constructed a Hamiltonian counterexample that is \emph{locally} integrable in a $p$-cone in the cotangent bundle. This means that on a fixed energy level, there exists an analytic change of variables, which transforms the Hamiltonian with non-Liouville potential to the standard form $(p_1^2 + p_2^2)/2$, however only for momenta $p_i$ in a certain cone in $\R^2$. We also mention examples of \emph{magnetically} integrable metrics which are not of Liouville type, constructed by Agapov-Bialy-Mironov in \cite{ABM}. 

\medskip

However, despite these delicate examples, certain suitably weakened conjectures are still believed to be true. These are supported by a variety of partial results obtained in this direction, starting with the classical works of Dini \cite{Dini43}, Darboux \cite{Darboux37}, and Birkhoff \cite{Birkhoff},  and further developed in \cite{BabNek95,kiyohara1991,kolokoltsov}. In particular, several works by Bialy and Mironov \cite{bialy1987, bialymironov1, bialymironov2, bialymironov3}, Denisova, Kozlov, and Treshev \cite{denisovakozlov1, denisovakozlov2, denisovakozlov3, kozlovtreshchev, denisovakozlovtreshev2012}, Mironov \cite{mironov}, and other authors \cite{babenkoneko1995, kolokoltsov, agapov, taimanov}, strongly indicate the validity of the following (yet unproven) conjecture: \emph{Every polynomially integrable\footnote{Polynomially integrable metrics are those whose second conserved quantity is a polynomial in the momentum variables.} metric $g$ on $\T^2$ is of Liouville type.}
We refer to \cite{probl1, probl2} for recent surveys on open problems and questions concerning geodesics and integrability of finite-dimensional systems. 

\medskip

The first author of the current paper, in \cite{henheik2025deformational}, studied a version concerning deformations of the folklore conjecture and showed that the class of Liouville metrics is \emph{deformationally rigid against a fairly large class of integrable conformal perturbations}, i.e., integrable deformations of Liouville metrics are again Liouville metrics. The proof proceeds by showing that possibly infinitely many linear equations for the Fourier coefficients of the deformation coming from the X-ray transform on the torus with a Liouville metric are nondegenerate. The structure of these equations was analyzed as an infinite Gram matrix. 

\medskip

Despite the considerable efforts towards resolving the folklore conjecture, there is only very sparse literature on the quantum version of it. In the 1990s, Kosygin-Minasov-Sinai \cite{KosyginMinasovSinai} and later Bleher-Kosygin-Sinai \cite{BleherKosyginSinai} and Vorobets \cite{Vorobets} studied the remainder term in Weyl's law (which is supposed to be sensitive to the integrable/nonintegrable nature of the geodesic flow) for the eigenvalues of the Laplace-Beltrami operator associated to a Liouville metric. Roughly speaking, they showed that the eigenvalue counting function $N(x) = \#\{\text{eigenvalues of } -\Delta_g \ \le x\}$ satisfies the asymptotics
\begin{equation}
    N(x) = \frac{1}{4 \pi} \mathrm{Area}_g(\T^2) x + x^{1/4} \theta(\lambda^{1/2}) \quad \text{as} \quad x  \to \infty
\end{equation}
for a large class of functions $f_1, f_2$ in \eqref{eq:deform}, where $\theta$ is  some Besicovitch almost periodic function. We refer to \cite{DAPopov} for a more recent survey. Our paper appears to be the first to study the very natural problem of (deformational) spectral rigidity of Liouville metrics.

\section{Dynamical preliminaries} \label{sec:dynamics}

In this section, we collect several preliminary dynamical results and definitions needed for the rest of this paper.

\subsection{Geodesic flows and connection to Hamiltonian systems}\label{sec:geod-hamilton}

The Riemannian metric $g$ determines an $\R$-action on the tangent bundle $T X$ of $X$ called the \textbf{geodesic flow}, which we shall denote by $\Phi^t_g: TX \to TX$ for $t \in \R$.
The geodesic flow restricts to a flow on the unit tangent bundle of $X$ with respect to the metric $g$, which we shall denote by $SX$.

\subsubsection{Hamiltonian formalism} \label{subsubsec:Hamiltonian} 

To relate the geodesic flow to the wave equation, we first convert it to a flow on the cotangent bundle, which we view as a Hamiltonian system. Denote by $(x, \xi)$ symplectic coordinates on $T^*X$, with $\xi = g_x(v,\cdot)$ for $v \in TX$, and consider the Hamiltonian $H = \frac{1}{2} |\xi|_{g^{-1}}^2$. It is easy to check, either in coordinates or by using Hamiltonian-Lagrangian correspondence for fiberwise convex Lagrangians via the Legendre transform, that projections of integral curves of the Hamiltonian vector field onto the configuration space $X = \T^2$ are geodesics\footnote{In the microlocal framework, it is actually standard to use $|\xi|_{g^{-1}}$ on the punctured cotangent bundle $\dot{T}^*X =T^* X - \{0\}$, which has the advantage of being homogeneous of degree one, corresponding to the order of $\sqrt{-\Delta}$}. That is, they are solutions of the system 
\begin{align*}
    \begin{cases}
        \dot{x} =  \frac{\d H}{\d \xi}\\
        \dot{\xi} = - \frac{\d H}{\d x}.
    \end{cases}
\end{align*}
We will denote by $\Phi_H^t(x_0,\xi_0) = (x(t), \xi(t))$ the flow map and by $\gamma$, individual geodesic lifts. Since $H$ is itself a first integral, that is $ H \circ \Phi_H^t = H$ for all $t \in \R$, it is convenient to restrict our qualitative study of the dynamics to the energy surface $S^*(X):= \{(x,\xi) \in T^*X : H(x,\xi) = 1\}$, which we will call the cosphere bundle\footnote{That $|\xi|^2 = 2$ on $S^*X$ is immaterial. Furthermore, if we let $H_1(x,\xi) = H^{1/m}(x,\xi)$, then the Hamiltonian vector fields of $H$ and $H_1$ are simply rescalings of one another: $V_H = m V_{H_1}$ on $S^*X$, so the dynamics are indistinguishable up to reparametrization}.

\subsubsection{Maupertuis principle (see, e.g., \cite{bolsinov1995maupertuis})} \label{subsubsec:Maupertuis}  
We recall the \textit{Maupertuis principle}, a useful tool for describing geodesics of Liouville metrics. Take a compact Riemannian manifold $(M,\widetilde g)$ and let 
\begin{equation} \label{eq:natHamilton}
\widetilde{H}(x,\xi) = \frac{1}{2}\sum_{i,j} \widetilde{g}^{ij}(x)\xi_i\xi_j - V(x),
\end{equation}
be a mechanical Hamiltonian on $T^*M$, where $V \in C^2(M)$ is a potential function. Moreover, let $T_h = \{\widetilde{H}(x,\xi) = h\}$ be an isoenergy submanifold for some $h > -\min_x V(x)$. We then note that $T_h$ is an isoenergy submanifold for another system with Hamiltonian 
\begin{equation} \label{eq:H-1}
H(x,\xi) = \frac{1}{2}\sum_{i,j} \frac{\widetilde g^{ij}(x)}{h+V(x)}\xi_i\xi_j\,,
\end{equation}
as well. That is, $T_h = \{H(x,\xi) = 1\}$. The \emph{Maupertuis principle} states that the integral curves of the Hamiltonian vector fields generated by $\widetilde{H}$ and $H$ on the fixed isoenergy submanifold $T_h$ coincide up to reparametrization. 
Finally, we note that the vector field generated by $H$ corresponds, via the Hamiltonian formalism (see Section~\ref{subsubsec:Hamiltonian}), to the geodesic flow of the Riemannian metric $g$ with
\begin{equation*}
    g_{ij}(x) = (h+V(x))\widetilde{g}_{ij}(x)\,. 
\end{equation*} 
We will use this for
\begin{equation} \label{eq:Maupertuis}
M=\T^2,\qquad \widetilde{g}_{ij}=\delta_{ij},\qquad h=1,\qquad V(x)=f_1(x_1)+f_2(x_2).
\end{equation}
In this case, the orbits are determined up to time reparametrization by the mechanical Hamiltonian 
\begin{equation}\label{eq:H separable}
    \widetilde H(x, \xi) = \frac{|\xi|^2}{2} - f_1(x_1) - f_2(x_2),
\end{equation}
restricted to the energy surface $T_h = \{ \widetilde{H}(x,\xi) = 1\}$.
A standard proof of the integrability of the geodesic flow of a Liouville metric consists of observing that the Hamiltonian $\widetilde H$ is \emph{separable}, and hence admits a second independent first integral, which can be taken to be 
\begin{equation*}
    \frac{\xi_1^2}{2} - f_1(x_1).
\end{equation*}
We refer to~\cite{henheik2025deformational} for a more detailed discussion of the Maupertuis principle and its application to Liouville metrics on $\T^2$.

\subsection{Length spectrum and the noncoincidence condition}
Recall that to any Riemannian manifold $(X,g)$, we define the length spectrum $\CL(X,g)$ to be the collection of lengths of closed geodesics. 
We now consider the case $X = \T^2$. 
Every closed geodesic of $(\T^2, g)$ can be uniquely associated with a homology class parametrized by the homology group $H_1(\T^2, \Z) \simeq \Z^2$.
Therefore, one can partition the length spectrum into $(m,n)$-\textbf{length spectra} $\CL^{m,n}(\T^2,g) \subset \R_{\geq 0}$, which are the lengths of all closed geodesics belonging to the homology class $(m,n) \in \Z^2$.
We remark for clarity that, if $(m,n) \neq (0,0)$ is nonprimitive, i.e., we have $(m,n) = (km',kn')$ for a pair $(m',n')$ and an integer $k$ with $|k| \geq 2$, then $\CL^{m,n}(\T^2,g)$ contains the lengths of $|k|$-fold iterates of closed geodesics in the homology class $(m',n')$.

\medskip

To state the exact condition on $f_1$ and $f_2$ in Theorem~\ref{thm:main2}, as well as the noncoincidence condition in Theorem~\ref{thm:isospec implies RI}, we shall consider two particular sets of spectral data $\CL^{\osc}(\T^2,g)$ and $\CL^{\rot}(\T^2,g)$, defined as follows.

\begin{def1} \label{def:osc Lsp}
    Consider a Riemannian metric $g$ on $\T^2$. We define
    \begin{equation}\label{osc Lsp}
        \CL^{\rot}(\T^2,g) := \bigcup_{\substack{(m,n) \in \Z^2\\ mn \neq 0}} \CL^{m,n}(\T^2,g)
        \et
        \CL^{\osc}(\T^2,g) := \bigcup_{\substack{(m,n) \in \Z^2\\ mn = 0}} \CL^{m,n}(\T^2,g)
    \end{equation}
    to be the \textbf{rotational} and \textbf{oscillatory length spectra}, respectively. 
\end{def1}

\begin{def1}\label{def:NCC}
    Let $g$ be a Riemannian metric on $\T^2$ and let $I \subset \R_{\geq 0}$ be a nonempty closed interval. 
    We say $g$ satisfies the \textbf{noncoincidence condition (NCC)} on $I$ if
    \begin{align}\label{eq:INCC}
     I \cap \CL^{\rot}(\T^2,g) \cap   \CL^{\osc}(\T^2,g) = \varnothing.
    \end{align}
    If $I = [0,T]$, we say $(\T^2,g)$ satisfies the noncoincidence condition up to time $T$.
\end{def1}

\begin{rema}
    The rationale for introducing ${\CL^{\osc}} (\T^2,g)$, ${\CL^{\rot}} (\T^2, g)$ and the noncoincidence condition~\eqref{eq:INCC} is the following. We will see in Proposition~\ref{prop:geom RT} below that for a Liouville metric on $\T^2$, the invariant tori whose foliating orbits belong to the homology class $(m,n)$ with $mn \neq 0$ project locally diffeomorphically onto the base. We call these \textbf{graph (rational) tori}. By contrast, periodic orbits in the homology class $(m,n)$ with $mn = 0$ might either belong to the separatrix region (where the smooth foliation breaks down) or to a {invariant} torus whose projection onto the base drops in rank at some point, corresponding to conjugate points of the underlying geodesics. We call these \textbf{non-graph (rational) tori}. In order to construct a microlocal parametrix for the wave propagator in Section~\ref{sec:trace} below, we consider only regions of phase space along which waves evolve along geodesics without conjugate points. The noncoincidence condition~\eqref{eq:INCC} then ensures that for each $t \in \CL^{\rot} (\T^2,g) \cap I$, no geodesic of length $t$ contains a pair of conjugate points.
\end{rema}

\begin{remark}
    If $g$ is a Liouville metric and $mn \neq 0$, then $\CL^{m,n}(\T^2,g)$ is a single point (see Proposition~\ref{prop:geom RT} below).
\end{remark}

\subsection{Rational integrability}

As a classical result (see, for example,~\cite{henheik2025deformational}), the geodesic flow of a Liouville metric on $\T^2$ is \emph{integrable} in the sense of Arnold-Liouville. 
While we do not need the general notion of Arnold-Liouville integrability in this paper, we shall introduce \emph{rational integrability}, a slightly different notion adapted to our particular situation.

\medskip

Roughly speaking, the geodesic flow on the unit tangent bundle $S\T^2$ is rationally integrable if every relevant closed orbit is contained in a \emph{rational torus}, namely, a family of closed geodesics that smoothly foliates an invariant torus in the unit tangent bundle and projects diffeomorphically onto the base manifold $\T^2$.
An analogous notion has been employed in works on the Birkhoff conjecture such as \cite{KaAvDS16,KaSo16,koval2023local}, where rational tori correspond to \emph{resonant convex caustics} in the case of billiards.

\medskip

Let us now formally define this notion in the setting of geodesic flows on $\T^2$.

\begin{def1}\label{def:RT}
    Let $g$ be a Riemannian metric on $\T^2$ and $s: \T^2 \to S\T^2$ be a continuous section of the unit tangent bundle of $(\T^2, g)$. 
    We call the image $\CT = \im(s)$ a \textbf{rational torus} of $(\T^2, g)$ if $\CT$ is invariant under the geodesic flow $\Phi^t_g$ and there exists $T > 0$ such that every point of $\CT$ is periodic under the geodesic flow with period $T$.
    We define the \textbf{length} of $\CT$ to be the minimal $T > 0$ for this to hold.
    We define the \textbf{homology class} of $\CT$ to be the homology class $[\gamma] \in H_1(\T^2, \Z)$ of any closed geodesic $\gamma:[0,T] \to \T^2$ which corresponds to a length-$T$ closed orbit in $\CT$.
\end{def1}

We emphasize that the definition above in particular requires a rational torus to be a \emph{graph} over $\T^2$. 
This graph property corresponds, in spirit, to the \emph{convexity} of a resonant caustic in billiards.

\medskip

We remark that since all closed geodesics of a fixed rational torus are homotopy equivalent as closed curves in $\T^2$, the homology class of a rational torus $\CT$ specified in Definition~\ref{def:RT} does not depend on the particular choice of the closed orbit in $\CT$.
For convenience, when a rational torus has homology class $(m,n)$, we shall call it an \emph{$(m,n)$-rational torus} or simply an \emph{$(m,n)$-torus}.
It will follow from Proposition~\ref{prop:geom RT} that if an $(m,n)$-rational torus exists then it is always unique.

\medskip

Although we do not \emph{a priori} assume any primitivity of the homology class of the rational torus in Definition~\ref{def:RT}, it will later follow from Proposition~\ref{prop:geom RT} (particularly the minimality property in part (1)) that the homology class of $\CT$ is always a \emph{primitive} element of $H_1(\T^2, \Z)$, i.e., it is not a positive integer multiple of a different element in $H_1(\T^2, \Z)$.
Under the identification $\Z^2 \simeq H_1(\T^2, \Z)$, the primitive elements $(m,n)$ are precisely the coprime pairs of integers.
In fact, if an $(m,n)$-rational torus exists, then all closed geodesics in the homology class $(km,kn)$ are iterates of geodesics of the $(m,n)$-rational torus.

\begin{def1}\label{def:RI}
    Let $T > 0$. 
    We say $(\T^2, g)$ is \textbf{rationally integrable up to $T$} if every closed geodesic which has length $\leq T$ and belongs to a homology class $(m,n) \in \Z^2$ for some coprime pair of integers $(m,n)$ with $mn \neq 0$ must belong to a rational torus of $(\T^2, g)$.
    We say $(\T^2, g)$ is \textbf{rationally integrable} if it is rationally integrable up to $T$ for all $T < \infty$.
\end{def1}

\begin{remark}
    Definition~\ref{def:RI} deliberately excludes closed geodesics in the homology classes $(\pm 1,0)$ and $(0,\pm 1)$. 
    This is because, for a typical Liouville metric, the invariant tori in these homology classes do not project diffeomorphically onto $\T^2$.
    In our later treatment of the deformations of Liouville metrics, it will be convenient to consider these geodesics separately from those in the rational tori in the sense of Definition~\ref{def:RT}.
\end{remark}

\subsection{Geometric properties of the rational torus}

In this section, we collect some properties of rational tori, which will be used in the proof of Theorem~\ref{thm:isospec implies RI}.

\medskip

We point out that the results in this section do not require the Riemannian metric to be a Liouville metric.
In particular, these results apply to \emph{any} Riemannian metric on $\T^2$ admitting a rational torus in the sense of Definition~\ref{def:RT}.

\medskip

On the other hand, the proof of the main result in this section, namely Proposition~\ref{prop:geom RT} below, relies crucially on the dimension of the torus being exactly $2$.
In fact, we do not know if our results can be generalized to $\T^d$ with $d \geq 3$.

\medskip

Proposition~\ref{prop:geom RT} is motivated by several well-known properties of convex caustics in billiards and, more generally, the invariant curves of symplectic twist maps.
For readers' convenience, we recall the following properties for \emph{essential} invariant curves of an exact symplectic twist map.\footnote{We say an invariant curve is \emph{essential} if it is not homotopically trivial, i.e., not a contractible curve in the phase cylinder.}
\begin{enumerate}
    \item all essential invariant curves are Lipschitz graphs over $\T$;
    \item distinct essential invariant curves do not intersect and the set of all such curves is totally ordered by their \emph{rotation numbers};
    \item if an essential invariant curve consists of periodic orbits with period $q > 0$, then the $q$-th iterate of the map has the twist property on the invariant curve and, moreover, the curve is as regular as the map itself.
\end{enumerate}
Statement (1) above is a classical theorem due to Birkhoff. 
Statements (2) and (3) are rather elementary.
See for example~\cite[Lemma 2.1]{PINTO-DE-CARVALHO_RAMIREZ-ROS_2013} for a result in the context of billiard maps.

\medskip

In the higher-dimensional setting, Arnaud-Massetti-Sorrentino~\cite{ArnaudMassettiSorrentino2023} considered \emph{Lagrangian periodic graphs} for symplectic twist maps on $\T^d \times \R^d$ for all $d \geq 2$ and, as intermediate results, established properties analogous to the invariant curves listed above.

\medskip

The statement of Proposition~\ref{prop:geom RT} below mirrors~\cite[Section 2]{ArnaudMassettiSorrentino2023}, but these two results deal with slightly different scenarios.
In particular, the cited work assumes that the symplectic map satisfies a \emph{global twist} condition, while, in our setup, the global twist condition does not hold unless the metric is flat.
As mentioned above, it is also unclear if Proposition~\ref{prop:geom RT} can be generalized to higher dimensions.

\medskip

To state the proposition, let us recall that $\Phi^t_g$ denotes the geodesic flow on the tangent bundle $T\T^2$ associated with $(\T^2, g)$, which naturally restricts to the unit tangent bundle $S\T^2$ with respect to the given metric $g$.

\begin{proposition}\label{prop:geom RT}
    Suppose $\CT = \im(s)$ is a rational torus of $(\T^2, g)$ corresponding to a $C^0$ section $s: \T^2 \to S\T^2$ as in Definition~\ref{def:RT}.
    Then the following properties hold.
    \begin{enumerate}
        \item $\CT$ contains exactly all unit-speed closed geodesics in its homology class. 
        In particular, every closed geodesic in $\CT$ is length minimizing within that homology class.
        \item The section $s$ is Lipschitz; moreover, its Lipschitz constant depends upper semicontinuously on $g$ with respect to the $C^2$ topology.
        \item (``Twist property'') Let $T > 0$ be the common length of orbits in $\CT$. For every $x \in \T^2$, the derivative at $u=0$ of the map
        \begin{equation}\label{eq:twist}
            \begin{aligned}
                T_x\T^2 &\longrightarrow \T^2,\\
                u &\longmapsto \pi\circ \Phi_g^T\big(s(x)+u\big)
            \end{aligned}
        \end{equation}
        has full rank (i.e., rank $2$), where $T_x \T^2$ denotes the tangent space of $\T^2$ at $x$ and $\pi$ denotes the canonical projection $T\T^2 \to \T^2$.
    \end{enumerate}
\end{proposition}

\begin{proof}
    Lift $g$ to the universal cover $\R^2$; we keep the same letter $g$ for the lifted $\Z^2$-periodic metric.
    Then $s$ lifts to a $\Z^2$-periodic section of $S\R^2$.
    For $x\in \R^2$, let $\gamma_x:\R\to \R^2$ denote the (unit-speed) geodesic with $\gamma_x(0)=x$ and $\gamma_x'(0)=s(x)$.
    By the definition of $\CT$, the projection of $\gamma_x$ to $\T^2$ belongs to the rational torus $\CT$, and there exist $T > 0$ and $(m,n) \in \Z^2$ such that $\gamma_x(T) = x + (m,n)$ for all $x \in \R^2$; the class $(m,n)$ is the homology class of $\CT$.

\medskip

    \noindent\underline{\textit{Proof of (1).}} 
    We claim the following: 
    \begin{equation}\label{eq:intersection lemma}
        \text{If a geodesic $\gamma$ of $(\R^2,g)$ intersects $\gamma_x$ at two distinct points, then $\im(\gamma)=\im(\gamma_x)$.}
    \end{equation}
    This statement
    \footnote{We emphasize that $\gamma$ is said to \textit{intersect} $\gamma_x$ as long as $\im(\gamma) \cap \im(\gamma_x) \neq \varnothing$, even though $\gamma$ and $\gamma_x$ may not reach the point of intersection at the same time.}
    will imply part (1) of the proposition because, if $\gamma:\R \to \R^2$ lifts a geodesic of $(\T^2, g)$ of the same homology class as $\CT$, then, for any $x \in \im(\gamma)$, the curve $\gamma$ intersects $\gamma_x$ at $x$.
    Since both $\im(\gamma)$ and $\im(\gamma_x)$ are invariant under the translation by $(m,n)$, they intersect infinitely many times; hence by \eqref{eq:intersection lemma} they coincide up to reparametrization, so the closed geodesic belongs to $\CT$. 

\medskip

    We now prove \eqref{eq:intersection lemma}. Let us suppose that a geodesic $\gamma$ intersects $\gamma_{x_0}$ more than once for some $x_0 \in \R^2$.
    If $\im(\gamma) \neq \im(\gamma_{x_0})$, then the two curves are not tangent at their points of intersection, and by dimensionality their intersections must be transverse.
    We may hence assume that two adjacent intersections occur at $\gamma(T_1)$ and $\gamma(T_2)$ for some $T_1 < T_2$. 
    By our choice, the segment $\gamma((T_1, T_2))$ does not intersect $\im(\gamma_{x_0})$.
    Consider the following subsets of $\R^2$.
    \begin{equation*}
        H_1 = \{x \in \R^2 \mid \gamma_{x} \text{ intersects } \gamma|_{(T_1, T_2)}\} \quad \text{and} \quad 
        H_2 = \{x \in \R^2 \mid \gamma_{x} \text{ intersects } \gamma|_{[T_1, T_2]}\}.
    \end{equation*}
    By continuous dependence of geodesics on initial data and stability of transverse intersections, $H_1$ is open and $H_2$ is closed; hence $\overline{H_1}\subset H_2$.
    Moreover, $H_2\setminus H_1=\im(\gamma_{x_0})$: indeed, if $x\in H_2\setminus H_1$, then $\gamma_x$ meets $\gamma([T_1, T_2])$ at an endpoint, and since $\gamma_x$ and $\gamma_{x_0}$ correspond to the same invariant torus $\CT$, they cannot intersect transversely; thus they are tangent and coincide up to reparametrization, so $\im(\gamma_x)=\im(\gamma_{x_0})$.
    Hence $H_1$ is both open and closed in $\R^2\setminus \im(\gamma_{x_0})$, so it must be a union of connected components of $\R^2\setminus \im(\gamma_{x_0})$.
    But in each of the two connected components of $\R^2\setminus \im(\gamma_{x_0})$ there exist points $x'$ such that the Riemannian distance between $\im(\gamma_{x'})$ and $\gamma(T_1)$ is greater than $T_2 - T_1$. 
    In particular, we have $\im(\gamma_{x'}) \cap \im(\gamma|_{(T_1, T_2)}) = \varnothing$, contradicting the definition of $H_1$.
    Therefore \eqref{eq:intersection lemma} holds, completing the first half of (1).

\medskip

    By what we have shown, the rational torus $\CT$ contains the minimal closed geodesic in the homology class of $\CT$.
    Since all closed geodesics in $\CT$ by definition have the common Riemannian length $T$, each is length minimizing within the homology class.
    The proof of part (1) is finished.

\medskip

    \noindent\underline{\textit{Proof of (2).}} 
    To prove that the section $s:\T^2 \to S\T^2$ of a rational torus is Lipschitz,
    the guiding heuristic is that the initial directions of geodesics in a rational torus whose starting points are close cannot differ much. 
    If they did, then under bounded curvature of the surface (uniform $C^2$-control of $g$), the geodesics would be forced to intersect, contradicting the fact that a rational torus is a graph over $\T^2$.
    The following proof formalizes this heuristic.
    \begin{figure}
        \centering
        \includegraphics[width=0.7\linewidth,page=1]{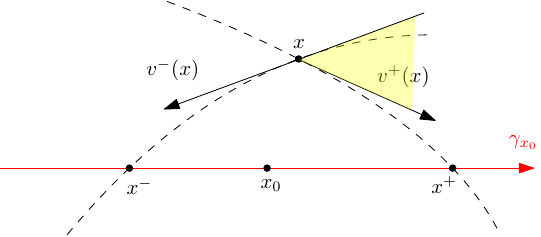}
        \caption{Proving the Lipschitz property of rational tori}
        \label{fig:lipschitz}
    \end{figure}
    Let us denote by $B(x_0, r)$ the geodesic ball centered at $x_0$ with radius $r > 0$.
    By standard results in Riemannian geometry, there exists $\varrho > 0$, depending lower-semicontinuously on $g$ in the $C^2$ topology, such that for all $x_0 \in \R^2$ and all $x, y \in \overline{B(x_0, \varrho)}$, there exists a unique minimizing unit-speed geodesic segment joining $x$ to $y$ and depending smoothly on $(x,y)$.\footnote{For example, it suffices to choose $\varrho < \frac{r_0}{2}$, where $r_0$ is the \textit{injectivity radius} of $(\T^2, g)$. 
    This is related to, but slightly different from the notion of the \emph{convexity radius} of a Riemannian manifold. 
    The latter is defined to be the maximum $\varrho$ such that every geodesic ball $B(x_0, \varrho)$ is \emph{geodesically convex}, i.e., for any pair $x, y \in B(x_0, \varrho)$ there is a unique length-minimizing geodesic segment joining $x$ and $y$ which is contained in $B(x_0, \varrho)$. 
    Our argument, however, does not need the geodesic to be contained in the ball.}

\medskip

    Let us fix an arbitrary $x_0 \in \R^2$ and fix two points $x^\pm := \gamma_{x_0}(\pm \varrho) \in \overline{B(x_0, \varrho)}$.
    Consider the following smooth map
    \begin{equation*}
        \overline{B(x_0, \varrho)} \ni x \mapsto (v^+(x), v^-(x)) \in \R^2 \times \R^2
    \end{equation*}
    where $v^\pm(x)$ is the initial velocity of the unique unit-speed minimizing geodesic segment joining $x$ to $x^\pm$.
    See Figure~\ref{fig:lipschitz} for an illustration.
    Since the geodesic differential equation involves derivatives of $g$ only up to first order, it follows that if $g$ and $g'$ are sufficiently close in the $C^2$-topology (assuming both have the rational torus of the same homology class), then the corresponding functions $v^\pm$ are close in the $C^1$-topology.
    It is clear from the definition that $v^+(x_0) = -v^-(x_0)$.
    
\medskip

    We now claim that for all $x \in B(x_0, \varrho)$, the initial velocity $\gamma_x'(0) \in \R^2$ of the corresponding unit-speed geodesic lifted from $\CT$ lies outside the (positive) cone generated by the tangent vectors $v^+(x)$ and $v^-(x)$.
    Indeed, if this does not hold, then $\gamma_x'(0)$ points towards the interior of the geodesic triangle formed by minimizing geodesic segments with vertices $x, x^-$ and $x^+$.
    However, by uniqueness of minimizing geodesic segments, the geodesic $\gamma_x$ does not intersect again the boundary of this geodesic triangle.
    This implies that $\gamma_x(t)$ is ``trapped'' in the interior of this triangle for all future times $t > 0$, contradicting the periodicity.
    Similarly, by time reversal, the same argument shows that $\gamma_x'(0)$ lies outside the cone generated by $-v^+(x)$ and $-v^-(x)$.
    By continuity and the fact that $\gamma_{x_0}'(0) = v^+(x_0) = -v^-(x_0)$, we rule out the cone generated by $-v^+(x)$ and $v^-(x)$.
    Hence, we deduce that $\gamma_x'(0) \in \R^2$ always lies in the positive cone generated by $v^+(x)$ and $-v^-(x)$ (indicated by the yellow region in Figure~\ref{fig:lipschitz}).

\medskip

    Note that $v^\pm(x)$ depend jointly differentiably on both $x$ and $x_0$. 
    Using $v^\pm(x)$ as bounding directions, the cone they span varies smoothly with $x$ and $x_0$. 
    This provides uniform two-sided bounds on the components of $s(x)=\gamma_x'(0)$. 
    By $\Z^2$-periodicity (compactness), these local bounds yield a global Lipschitz estimate for the map 
    \[
    \R^2 \ni x \longmapsto s(x)=\gamma_x'(0) \in S\R^2
    \]
    with a Lipschitz constant depending upper semicontinuously on the $C^1$ norms of $v^\pm$.
    Since the $C^1$ norms of $v^\pm$ depend continuously on the metric $g$ in the $C^2$-topology, the Lipschitz constant of $s$ depends upper semicontinuously on $g$.
    The proof of part (2) is finished.

\medskip

    \noindent\underline{\textit{Proof of (3).}} 
    Before entering the detailed argument, let us informally explain the guiding heuristic of the proof.
    We vary the initial direction of a geodesic in a rational torus and let the perturbed geodesic (the red curve in Figure~\ref{fig:twist}) intersect a nearby closed geodesic of the rational torus (the blue curve). 
    By the observation in part (1), after this intersection the red geodesic and the original geodesic must lie on opposite sides of the blue one. 
    Using the Lipschitz regularity established in part (2), we then show that the red geodesic deviates sufficiently in the transverse direction. 
    In the limit, this implies that the derivative of the map~\eqref{eq:twist} at $u=0$ in the transverse direction is nonvanishing, establishing the ``twist property''.
    \begin{figure}
        \centering
        \includegraphics[width=\linewidth, page=2]{liouville_figs.pdf}
        \caption{Proving the twist property of rational tori}
        \label{fig:twist}
    \end{figure}

\medskip

    We now present the details. Infinitesimally, varying $u$ in \eqref{eq:twist} corresponds to perturbing the initial direction $s(x)$ of the geodesic $\gamma_x$ while keeping the base point fixed. 
    Because $\gamma_x$ is periodic, the vector $s(x)$ is an eigenvector of the derivative of~\eqref{eq:twist} at $u=0$ with a strictly positive eigenvalue.
    Hence it suffices to study variations of $u$ transverse to $s(x)$.

\medskip

    Fix $x\in \R^2$ and let $c_1:\R\to \R^2$ be a $C^1$ curve intersecting $\gamma_x$ transversely at $c_1(0)=x$.
    Define $c_2(t):=c_1(t)+(m,n)$; then $c_2$ intersects $\gamma_x$ transversely at $c_2(0)=\gamma_x(T)$.
    For $v\in S_x\R^2$ sufficiently close to $s(x)$, the geodesic with initial velocity $v$ meets $c_2$ transversely at a unique point $c_2(\tau(v))$ with $\tau(v) \in \R$ near $0$; we must show that $v\mapsto \tau(v)$ has nonzero derivative at $v=s(x)$. Take $\varrho>0$ smaller than the injectivity radius and set $x_1=\gamma_x(\varrho)$.
    For $|t|$ small, let $\tilde\gamma^{t}$ be the unique unit-speed minimizing geodesic from $x$ to $\gamma_{c_1(t)}(\varrho)$, and write $v(t)\in T_x\R^2$ for its initial velocity.
    We recall that, by definition, the geodesic $\gamma_{c_1(t)}$ starts at the point $c_1(t)\in \R^2$ with the initial velocity $s(c_1(t))$ defined by the rational torus $\CT$, thus lifting a closed geodesic in $\CT$.
    By (2) and smooth dependence on endpoints, $t\mapsto v(t)$ is Lipschitz:
    \begin{equation}\label{eq:v Lip}
        \|v(t)-v(0)\|\leq L|t| \qquad (|t|\ \text{small}),
    \end{equation}
    for some $L<\infty$, where $\|\cdot\|$ is the norm induced by $g$ on $T_x\R^2$.
    For $|t|$ small, $v(t)$ stays close to $s(x)=v(0)$, so the geodesic with initial velocity $v(t)$ meets $c_2$ at $c_2(\tau(v(t)))$ with $\tau(v(t))$ near $0$.
    The curves $\gamma_{c_1(t)}$ and $\tilde\gamma^{t}$ meet at $\gamma_{c_1(t)}(\varrho)$ and, by part (1), do not meet again; this forces $\tau(v(t))$ and $t$ to have the same sign and $|\tau(v(t))|>|t|$.
    Using \eqref{eq:v Lip},
    \begin{equation*}
        \frac{|\tau(v(t))|}{\|v(t) - v(0)\|} > \frac{|t|}{\|v(t) - v(0)\|} \geq \frac{1}{L} > 0.
    \end{equation*}
    It is easy to show using the intermediate value theorem that $t \mapsto v(t)$ is surjective onto an open neighborhood of $s(x) = v(0)$ in $S_{x}\R^2$.
    The above inequality then implies 
    \begin{equation*}
        |\tau(v)| > \frac{1}{L} \|v - s(x)\|
    \end{equation*}
    for $v$ sufficiently close to $s(x)$.
    This in turn shows that $S_{x}\R^2 \ni v \mapsto \tau(v)$ has nonzero derivative at $s(x)$.
    The proof of (3) is finished.
\end{proof}

\subsection{Loop functions}

Let us apply Proposition~\ref{prop:geom RT} part (3) to study what happens to the rational torus under a perturbation of the metric.
We show that, although in general the rational torus may not survive the perturbation, there nevertheless exists a family of ``connecting orbits'' joining any pair of sufficiently close points $x$ and $y$ in $\T^2$, and that these orbits depend smoothly on the endpoints $(x,y)$ and on the deformation parameter.
In particular, for every $x \in \T^2$, there exists a unit-speed geodesic segment with respect to the perturbed metric joining $x$ back to itself and having the same homology class as $\CT$.
In general, this geodesic segment is \emph{not} periodic, in contrast to the case of a rational torus.

\begin{coro}\label{cor:loop fn}
    Let $(g_\eps)_{|\eps| \leq \eps_0}$ be a smooth family of Riemannian metrics on $\T^2$ such that $(\T^2, g_0)$ has a rational torus $\CT$.
    Then there exists a neighborhood $\CU$ of the diagonal of $\T^2 \times \T^2$ such that, for all $\eps$ in a sufficiently small neighborhood of zero, there exist smooth functions $s_\eps: \CU \to S\T^2$ and $\psi^\eps: \CU \to \R_{>0}$ depending smoothly on $\eps$ and satisfying
    \begin{equation}\label{eq:loop fn}
        \pi \circ s_\eps(x,y) = x
        \et
        \pi\circ \Phi^{\psi^\eps(x, y)}_{g_\eps}(s_\eps(x, y)) = y
        \quad \text{for all } (x,y) \in \CU,
    \end{equation}
    with $\CT = \{s_0(x,x) \mid x \in \T^2\}$.
\end{coro}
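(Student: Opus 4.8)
The plan is to build the loop functions $s_\epsilon$ and $\psi^\epsilon$ by an implicit function theorem argument, using the twist property from Proposition~\ref{prop:geom RT}(3) as the nondegeneracy hypothesis. First I would work on the universal cover $\R^2$, where the rational torus $\CT$ of $(\T^2,g_0)$ lifts to a $\Z^2$-periodic Lipschitz section $s_0$ with common period $T$ and homology class $(m,n)$; fix $x_0 \in \R^2$ and consider the (now smooth, by bootstrapping from the Lipschitz graph property — or simply working with the Hamiltonian flow, which is smooth) dependence of geodesics on initial data and metric. The key object is the map
\begin{equation*}
    F: \R^2 \times S\R^2 \times \R \times (-\epsilon_0, \epsilon_0) \longrightarrow \R^2, \qquad F(x, v, \tau, \epsilon) = \pi \circ \Phi^\tau_{g_\epsilon}(x, v),
\end{equation*}
which is smooth in all variables. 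At $\epsilon = 0$, $x = x_0$, $v = s_0(x_0)$, $\tau = T$ we have $F = x_0 + (m,n)$. I want to solve $F(x, v, \tau, \epsilon) = y$ for $(v, \tau)$ as a smooth function of $(x, y, \epsilon)$ near this base point. The linearization in $(v, \tau)$ at the base point is exactly the derivative of the map $u \mapsto \pi \circ \Phi^T_{g_0}(s_0(x_0) + u)$ supplemented by the flow direction $\frac{d}{d\tau}\pi \circ \Phi^\tau_{g_0} = \dot\gamma_{x_0}(T)$, which is tangent to the geodesic. Since varying $v$ transverse to $s_0(x_0)$ already gives rank $2$ by the twist property, and varying $\tau$ recovers the remaining tangent-to-geodesic direction (or is redundant but the combined differential still surjects), I would arrange the domain so that the relevant differential is an isomorphism — concretely, restrict $v$ to a transverse one-parameter family and keep $\tau$, giving a $2 \times 2$ invertible Jacobian.

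Then by the implicit function theorem there is a neighborhood of $(x_0, x_0 + (m,n))$ in $\R^2 \times \R^2$ and a neighborhood of $0$ in $\epsilon$ on which smooth solutions $v = \tilde v(x, y, \epsilon)$, $\tau = \tilde\psi(x, y, \epsilon)$ exist with $\tilde v(x_0, x_0 + (m,n), 0) = s_0(x_0)$ and $\tilde\psi(x_0, x_0 + (m,n), 0) = T$. Setting $s_\epsilon(x, y) = (x, \tilde v(x, y, \epsilon))$ gives $\pi \circ s_\epsilon(x,y) = x$ and $\pi \circ \Phi^{\tilde\psi}_{g_\epsilon}(s_\epsilon(x,y)) = y$ by construction. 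To pass back to $\T^2$ and obtain a neighborhood of the \emph{full} diagonal rather than a single point, I would exploit $\Z^2$-equivariance: the metric $g_\epsilon$ and the section $s_0$ are $\Z^2$-periodic, so the construction at $x_0 + k$ for $k \in \Z^2$ is the translate of the one at $x_0$, and uniformity over $x_0$ in a fundamental domain (hence globally, by compactness of $\T^2$) follows from compactness plus the upper-semicontinuity of constants in Proposition~\ref{prop:geom RT}(2)–(3) — in particular the twist differential is invertible uniformly, so the implicit function theorem applies with a uniform neighborhood size. Pushing down, $y \in \R^2$ near $x_0 + (m,n)$ corresponds to $y \bmod \Z^2$ near $x_0 \bmod \Z^2$, so we get $\psi^\epsilon: \CU \to \R^+$ on a neighborhood $\CU$ of the diagonal of $\T^2 \times \T^2$; positivity of $\psi^\epsilon$ holds since $\tilde\psi$ is close to $T > 0$ (shrink $\CU$ and $\epsilon_0$ if needed). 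Finally $\CT = \{s_0(x,x) : x \in \T^2\}$ because at $\epsilon = 0$, $y = x$ forces $\tilde v(x,x,0) = s_0(x)$: indeed $s_0(x)$ is \emph{a} solution (the geodesic $\gamma_x$ returns to $x$ after time $T$), and it is the \emph{unique} one in the neighborhood by the local uniqueness in the implicit function theorem.

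The main obstacle I anticipate is the uniformity/globalization step: ensuring that the implicit function theorem can be applied with neighborhoods whose size does not shrink as $x_0$ ranges over a fundamental domain, so that the union over $x_0$ genuinely contains an open neighborhood $\CU$ of the diagonal. This requires that the twist differential in Proposition~\ref{prop:geom RT}(3) is bounded below uniformly in $x$ — which should follow from the stated continuity/semicontinuity of the constants and compactness of $\T^2$, but needs to be spelled out carefully, together with a uniform lower bound on the size of the neighborhood in the quantitative (e.g.\ Newton-iteration) form of the implicit function theorem. A secondary subtlety is matching the homology class: one must check that the connecting orbit produced by the implicit function theorem, being a small perturbation of the closed geodesic $\gamma_x$ in $\CT$, indeed represents the class $(m,n)$ — this is immediate from continuity since the homology class is locally constant under $C^0$-small perturbations of the curve with endpoints tracked, but it should be noted. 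The smoothness in $\epsilon$ is automatic from the smoothness of $F$ in $\epsilon$ and the implicit function theorem.
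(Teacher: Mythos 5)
Your proposal is correct and follows essentially the same route as the paper: an implicit function theorem argument for the endpoint map whose nondegeneracy input is the twist property of Proposition~\ref{prop:geom RT}(3). The only cosmetic difference is that the paper solves for the full (non-unit) tangent vector $s(x)+u$ at the fixed time $T$ and then rescales to recover the unit direction $s_\eps$ and the length $\psi^\eps$, whereas you use the equivalent polar parametrization (unit direction, travel time); the uniformity over the diagonal that you spell out is left implicit in the paper via compactness.
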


\begin{proof}
    Let $T$ be the length of $\CT$. By definition of rational tori, the map 
    \begin{equation}
        x \mapsto \pi \circ \Phi^T_{g_0}(s(x))
    \end{equation}
    is the identity map on $\T^2$. 
    By Proposition~\ref{prop:geom RT} part (3), we may apply the implicit function theorem to obtain a unique tangent vector $u(\eps, x, y)$ in a neighborhood of $0$ in $T_x\T^2$, for $|\eps|$ sufficiently small and $y$ sufficiently near $x$, such that $\pi\circ \Phi^T_{g_\eps}(s(x) + u(\eps,x, y)) = y$.
    Moreover, the vector $u(\eps, x,y)$ depends smoothly on $\eps$, $x$ and $y$, and $u(0, x,x) = 0$. By rescaling the tangent vector $s(x) + u(\eps, x,y)$ to a vector $s_\eps(x,y)$ of unit length and defining $\psi^\eps(x,y)$ to be $T$ times the Riemannian norm of $s(x) + u(\eps, x,y)$, we obtain~\eqref{eq:loop fn} as desired.
    It is clear from the construction that $s(x) = s_0(x,x)$. It follows that $\CT = \{s_0(x,x) \mid x \in \T^2\}$.
\end{proof}

\begin{remark}
    The conclusion of Corollary~\ref{cor:loop fn} in particular implies that a rational torus of $(\T^2, g)$, if it exists, is automatically a smooth submanifold of $T\T^2$.
\end{remark}

\begin{def1}[Loop functions]\label{def:loop functions}
    Let $\CT$ be an $(m,n)$-rational torus and $\psi_{m,n}^\eps(x,y)$ the flow times in Corollary \ref{cor:loop fn}. We denote by $\psi_{m,n}(x)$ its restriction to the diagonal, which we call the \textbf{$(m,n)$-loop function}, and refer to $\psi_{m,n}(x,y)$ as its off-diagonal extension.
\end{def1}

Since geodesics are parametrized with unit speed, the flow times $\psi_{m,n}^\eps(x,y)$ coincide with the lengths of such orbits. Recall that a geodesic \textit{loop} is a geodesic whose start and endpoints in $\T^2$ agree, even if the corresponding orbit is not periodic, i.e., the initial and final tangent vectors may differ. Corollary~\ref{cor:loop fn} says that $(m,n)$-loops always exist when $mn \neq 0$ and $\psi_{m,n}(x)$ is the length of such a loop based at $x$.

\begin{rema}
    A simple calculation shows that $-\nabla_x \psi_{m,n}(x,y)$ and $\nabla_y \psi_{m,n}(x,y)$ are the positively oriented tangent vectors at $x$ and $y$ to the geodesic $\gamma$ as in Corollary~\ref{cor:loop fn}. It then follows that $x$ is a critical point of the loop function, i.e., $\nabla_x \psi_{m,n}(x) = 0$, if and only if $\gamma$ is a \textit{closed} geodesic through $x$ in the homology class $(m,n)$.
\end{rema}

\section{A microlocal trace formula and noncancellation: Proof of Theorem~\ref{thm:isospec implies RI}} \label{sec:trace}

In this section, we demonstrate noncancellation of the wave trace and prove Theorem~\ref{thm:isospec implies RI}. We show that the wave trace $\tr \cos t \sqrt{-\Delta}$ must be singular at the majority of the length spectrum, namely the rotational lengths under a noncoincidence condition. In other words, we show that there is an \textit{equality} at certain lengths in the Poisson relation. The exceptional lengths which may possibly disappear arise from rational tori which do not project diffeomorphically onto the base, corresponding to periodic orbits with nontrivial Morse indices (due to conjugate points). The possibility of cancellations in the wave trace was recently confirmed by Koval together with the second and fourth authors \cite{KKV}, \cite{KV24}.

\medskip

We first assume that we have a Laplace isospectral deformation $g_\eps$ of an integrable Riemannian metric $g_0$ and show that the part of the length spectrum which is associated to graph rational tori, whose corresponding orbits have no conjugate points (called the rotational length spectrum, cf. \eqref{osc Lsp}) is constant in $\eps$. The part of the length spectrum that is associated to \textit{nongraph} rational tori, whose projections onto $\T^2$ drop in rank at some point (called the oscillatory length spectrum) may change. However, the noncoincidence condition~\eqref{eq:INCC} ensures that the rotational and oscillatory length spectra do not overlap. A Marvizi-Melrose type trace formula (Theorem~\ref{MM style parametrix}) then shows that the wave trace, a Laplace spectral quantity, is singular at the rotational length spectrum. We do not impose any clean intersection type hypotheses on periodic orbits. Our trace formula is accompanied by an off-diagonal microlocal parametrix for the wave propagator (Theorem~\ref{parametrix}), in close connection with the parametrices constructed in \cite{Vig21} and \cite{Vig22}.

\begin{theo} \label{thm:laplacetolength}
    If $g_\eps$ is a Laplace isospectral deformation of a rationally integrable metric $g_0$ on $\T^2$, $|\eps| \leq \eps_0$, and the noncoincidence condition~\eqref{eq:INCC} holds on some bounded interval $I$, then there exists $\eps_1>0$ such that $g_\eps$ is also rotationally length isospectral on $I$ for $|\eps| < \eps_1$, i.e., $\CL^{\rot} (\T^2,g_\eps) \cap I$ does not depend on $\eps$.
\end{theo}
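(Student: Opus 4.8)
The plan is to build an explicit microlocal parametrix for the wave propagator $\cos t\sqrt{-\Delta_{g_\eps}}$ near each length $t_0 \in \CL^{\rot}(\T^2,g_0) \cap I$ and to show that the corresponding term in the (regularized resolvent / wave) trace does not vanish, so that the singularity persists and is detected by the spectrum. Since the family is Laplace isospectral, the singular support of the wave trace is $\eps$-independent; the real content is therefore to show (i) that the rotational length spectrum cannot \emph{lose} a length (noncancellation) and (ii) that it cannot \emph{gain} one continuously without violating the noncoincidence condition.

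First I would fix a rational torus $\CT_0 \subset S^*\T^2$ at $\eps=0$ with foliating orbits of homology class $(m,n)$, $mn\neq 0$, and length $\ell = \min\CL^{m,n}$, and invoke Corollary~\ref{cor:loop fn} to produce the smooth off-diagonal loop functions $\psi^\eps_{m,n}(x,y)$ on a neighborhood $\CU$ of the diagonal. Because $\CT_0$ is a graph torus (no conjugate points), these loop functions are genuine generating functions for a local canonical transformation, with no Maslov-index/caustic contributions — this is the reason the construction avoids fiber variables. Using the WKB/Fourier-integral-operator machinery as in \cite{Vig21,Vig22}, I would write the kernel of $\cos t\sqrt{-\Delta_{g_\eps}}$ microlocally near the flowout of $\CT_\eps$ as an oscillatory integral with phase built from $\psi^\eps_{m,n}$ and a polyhomogeneous symbol, transport-solving the amplitude equations order by order. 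Integrating over the diagonal then yields a Marvizi–Melrose style trace formula (Theorem~\ref{MM style parametrix} / Theorem~\ref{parametrix}) whose leading contribution at $t = \psi^\eps_{m,n}(x,x)$ is a nonzero oscillatory integral over $\T^2$ in the stationary-phase variable $x$; the critical points are exactly the points lying on closed geodesics in $\CT_\eps$, and positivity of the principal amplitude (a volume/Jacobian factor coming from the twist property, Proposition~\ref{prop:geom RT}(3)) prevents cancellation. Here I would follow Hezari–Zelditch \cite{HeZe19} and apply Soga's theorem on the (non)decay of oscillatory integrals to the regularized resolvent trace~\eqref{resolvent} to conclude that this length is a genuine singularity of the wave trace for every small $\eps$; hence $\CL^{\rot}(\T^2,g_\eps)\cap I \supseteq \CL^{\rot}(\T^2,g_0)\cap I$ — nothing disappears. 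For the reverse inclusion, suppose some new rotational length appeared; by continuity of the geodesic flow in $\eps$ it would have to arise from a nongraph (oscillatory) length at $\eps=0$ coalescing with, or separating into, the rotational regime, which is precisely what the noncoincidence condition~\eqref{eq:INCC} on $I$ — together with the fact that for the rationally integrable metric $g_0$, $\CL_{\min}$ is a single point and rotational tori are graphs — forbids; a short compactness/continuity argument on the closed interval $I$ closes this.

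The main obstacle I anticipate is the parametrix construction itself, specifically controlling the trace near a length $\ell$ at which the length spectrum in the class $(m,n)$ may be a whole cluster rather than a single nondegenerate orbit: there is no a priori isolation or clean-intersection hypothesis, so the stationary-phase analysis of $\int_{\T^2} e^{i\lambda(\psi^\eps_{m,n}(x,x)-t)} a(x,\lambda)\,dx$ must be carried out with the full function $x\mapsto \psi^\eps_{m,n}(x,x)$, whose critical manifold and Hessian degeneracies are uncontrolled. The way around this is to never do stationary phase: keep the $x$-integral intact, express the trace as an honest oscillatory integral in $x$, and feed it to Soga's theorem, which only needs the \emph{existence} of at least one critical point with a nonvanishing amplitude (guaranteed by the preserved-torus structure at $\eps=0$ and persistence of the loop function) to rule out smoothness of the trace there. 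The secondary difficulty — ensuring the positivity of the principal symbol so that distinct critical points cannot cancel — is handled by the twist property of Proposition~\ref{prop:geom RT}(3), which makes the relevant Jacobian a nonvanishing density of a fixed sign along $\CT_\eps$; this is the structural payoff of having done the dynamical preliminaries first. Finally, patching the local parametrices into a global one on $S^*\T^2$ via a partition of unity subordinate to the decomposition of phase space into graph-torus flowout regions and the (irrelevant, microlocally cut off) separatrix/conjugate-point regions completes the argument, the noncoincidence condition guaranteeing that the cut-offs do not interfere with the lengths in $\CL^{\rot}\cap I$.
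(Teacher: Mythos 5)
Your toolkit is the right one --- off-diagonal loop functions, a fiber-variable-free Marvizi--Melrose parametrix, reduction to a two-dimensional oscillatory integral in $x$, and Soga's theorem \`a la Hezari--Zelditch --- and you correctly identify the central difficulty (the length spectrum near $\ell_0$ is an uncontrolled cluster with no isolation or nondegeneracy hypothesis). But your proposed resolution of that difficulty has a genuine gap. Soga's theorem does \emph{not} apply whenever ``at least one critical point with nonvanishing amplitude'' exists: it requires a \emph{degenerate} stationary point with positive amplitude. At $\eps=0$ the loop function is constant and everything is degenerate, but for $\eps\neq 0$ you do not know the torus persists (that is the conclusion, not a hypothesis), and the critical points of $x\mapsto\psi^\eps_{m,n}(x)$ may all be nondegenerate. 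In that case one must do stationary phase after all, and the contributions carry factors $e^{i\pi\sgn(D^2\psi)/4}/|\det D^2\psi|^{1/2}$ whose signatures in two dimensions are $0$ or $\pm 2$; positivity of the principal amplitude does \emph{not} by itself prevent cancellation between a critical point of signature $+2$ and one of signature $-2$. The paper closes this loophole by working at the \emph{endpoints} $t_\eps=\min_x\psi^\eps_{m,n}(x)$ and $T_\eps = \max_x \psi^\eps_{m,n}(x)$ of the deformed cluster: at the minimal critical value there are no nondegenerate local maxima, so all signatures agree and the leading terms add constructively, while if a degenerate critical point occurs, Soga's theorem takes over. This is why the paper argues by contradiction on the cluster endpoints (the cluster can neither spread, $t_\eps<T_\eps$, nor translate, $t_\eps=T_\eps\neq\ell_0$, without producing a singularity of $w_\eps$ at a point where $w_0$ is smooth), rather than trying to show directly that each $t_0\in\CL^{\rot}(\T^2,g_0)\cap I$ survives --- your ``nothing disappears'' direction would force you to analyze critical points of $\psi^\eps$ at an interior critical value, where saddles and maxima can coexist and cancellation cannot be excluded by your argument.

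A secondary inaccuracy: new rotational lengths do not arise from oscillatory lengths ``coalescing with or separating into the rotational regime''; they arise as bifurcations of the existing rotational lengths themselves, because the loop function $\psi^\eps_{m,n}$ (which exists for all small $\eps$ by Corollary~\ref{cor:loop fn}) need no longer be constant. The role of the noncoincidence condition~\eqref{eq:INCC} is correspondingly different from what you describe: it guarantees that every periodic orbit whose length lies near the cluster interval belongs to a graph torus, so that the conjugate-point-free parametrix accounts for the \emph{entire} microlocal contribution to the wave trace there and no unmodeled oscillatory-orbit contribution can mask or cancel the singularity you are trying to detect.
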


Theorem~\ref{thm:isospec implies RI} now follows easily from Theorem~\ref{thm:laplacetolength}, whose proof is given at the end of Section~\ref{subsec:laplacetolength} below.

\begin{proof}[Proof of Theorem~\ref{thm:isospec implies RI}]
    Since $g_0$ is required to satisfy the noncoincidence condition on some interval $I$ in Theorem~\ref{thm:isospec implies RI}, by Proposition \ref{prop:NCC gen}, there exists $\eps_0 > 0$ such that  $g_\eps$ satisfies the noncoincidence condition on $I$ for $|\eps| \leq \eps_0$. 
    It then follows from Theorem \ref{thm:laplacetolength} that $g_\eps$ is necessarily rotationally length isospectral on $I$ for sufficiently small $\eps$. Now let $\psi_{m,n}^\eps$ be the $(m,n)$-loop functions for $g_\eps$, as in Definition~\ref{def:loop functions}. For $(m,n)$-geodesics of the metric $g_0$ with length in the interior of the interval $I$, consider their deformations as $\eps$ increases. Since critical points of $\psi_{m,n}^\eps$ correspond to closed geodesics in the homology class $(m,n)$, it is clear that $\max \psi_{m,n}^\eps$ and $\min \psi_{m,n}^\eps$ belong to the length spectrum for all $\eps$, so they must in fact coincide, forcing $\psi_{m,n}^\eps$ to be constant. This means that for each $x \in \T^2$, there exists a unique periodic $g_\eps$-geodesic through $x$ which is homotopic to an $(m,n)$-geodesic loop. If $\psi_{m,n}^\eps(x,y)$ is the off-diagonal extension of $\psi_{m,n}^\eps(x)$ as in Definition~\ref{def:loop functions}, $\{(x,d_x \psi_{m,n}^\eps(x,y)|_{y = x} ): x \in \T^2\} \subset T^*\T^2$ parametrizes a one parameter family of periodic orbits for the geodesic flow which form a rational torus. This proves part (1) of Theorem~\ref{thm:isospec implies RI} and part (2) then follows by taking $I = [0,T]$.
\end{proof}

While Theorems~\ref{thm:main2} and~\ref{thm:laplacetolength} apply only to $\T^2$, we will construct our parametrix more generally on a compact Riemannian manifold $X$ of dimension $d$ and specialize to the cases $X = \T^2$ with $g_0$ integrable (or Liouville) as necessary. Recall from \eqref{eq:wave trace} that
\begin{align*}
    w(t) = \tr \cos t \sqrt{-\Delta} \in \mathscr{S}'(\R),
\end{align*}
interpreted in the sense of distributions. That $w(t)$ is well defined follows from the functional calculus. For any $\rho \in \mathscr{S}(\R)$ (the Schwartz space), we have
\begin{align}\label{eq:functional calculus}
    \rho(\sqrt{-\Delta}) = \frac{1}{2\pi} \int_{-\infty}^{\infty} e^{i t \sqrt{-\Delta}} \check{\rho}(t) dt,
\end{align}
where $\check{\rho}$ is the inverse Fourier transform of $\rho$. From Weyl's law on the asymptotic distribution of eigenvalues, it is clear that the left-hand side of \eqref{eq:functional calculus} is trace class. Taking the trace of both sides in \eqref{eq:functional calculus} defines the pairing of $\tr e^{i t \sqrt{-\Delta}}$ and $\check{\rho} \in \mathscr{S}(\R)$, making the former a tempered distribution by duality.

\medskip

The Poisson relation dictates that $\singsupp \,\tr \big(\cos t \sqrt{-\Delta}\big) \subset \pm \overline{\CL(X, g)} \cup \{0\}$. Our strategy will be to construct an approximation (parametrix) of $\cos {t \sqrt{-\Delta}}$ and integrate its Schwartz kernel over the diagonal; if $K(x,y)$ is the Schwartz kernel of an integral operator $K$ which is smoothing\footnote{$\cos t \sqrt{-\Delta}$ is not smoothing, but a regularization of it will be.}, has discrete spectrum and can be diagonalized with eigenvalues $\mu_j$ and eigenfunctions $\phi_j$, we have
\begin{align}
    K(x,y) = \sum_j \mu_j \phi_j(x) \overline{\phi_j}(y),
\end{align}
and hence
\begin{align*}
    \tr K = \int_X K(x,x) dx.
\end{align*}
By choosing a test function $\rho$ such that $\check{\rho}$ is a bump function which is compactly supported near some part of the length spectrum, we can detect the contribution of those lengths to the wave trace by studying the Schwartz kernel of the operator on the right-hand side of \eqref{eq:functional calculus} and integrating it over the diagonal in $X \times X$. To find a formula for $\cos {t \sqrt{-\Delta}}$, we will follow a Fourier integral operator variant of the WKB method.

\subsection{Wave propagators as FIOs}
	
It is well known, going back to the work of H\"ormander, that $e^{i t \sqrt{- \Delta}}$ is a Fourier integral operator (FIO) which quantizes the time $t$ geodesic flow. In particular, so are its real and imaginary parts. The propagators
\begin{align}\label{propagators}
    E(t) := \cos t \sqrt{- \Delta}, \qquad S(t) = \frac{\sin t \sqrt{-\Delta}}{\sqrt{- \Delta}}
\end{align}
are solution operators, defined by the functional calculus, for the wave equation. We will denote by $E(t,x,y)$ and $S(t,x,y)$ the Schwartz kernels of $E$ and $S$ respectively:
\begin{align*}
    \begin{cases}
        (\d_t^2 - \Delta) E(t,x,y) = 0,\\
        E(0,x,y) = \delta(x-y),\\
        \frac{\d}{\d t} E(0,x,y) = 0,
    \end{cases}
    \qquad 
    \begin{cases}
        (\d_t^2 - \Delta) S(t,x,y) = 0,\\
        S(0,x,y) = 0,\\
        \frac{\d}{\d t} S(0,x,y) = \delta(x-y).
    \end{cases}
\end{align*}
For $t \in \R$, $E(t)$ belongs to the class $I^{-1/4}(\R \times X, X; C)$ of Fourier integral operators from $\R \times X$ to $X$ with canonical relation $C = \cup_\pm C_\pm$, where
\begin{align}
\label{eq:CRLagrangians}
\begin{split}
    C_\pm =& \{((t, x, \tau, \xi), (y, - \eta)) \in T^*(\R \times X) \times T^*X: (t, x,y, \tau, \xi, \eta) \in \Lambda_\pm\},\\
    \Lambda_\pm =& \left \{ (t, x, y, \tau, \xi, \eta): t \in \R, \tau = \mp |\xi|, (x,\xi) = \Phi_H^t(y,\eta) \right\}.
\end{split}
\end{align}
In other words, $C_\pm$ are, up to a sign in one of the fiber variables, the graphs of the geodesic flow on $T^* X$ and are to be interpreted as corresponding to Lagrangian submanifolds $\Lambda_\pm$ in $T^*(\R \times X \times X)$. The Schwartz kernel of $E$ is a Lagrangian distribution associated to $\Lambda_\pm$ and the two branches $(\pm)$ correspond to time reversal. If $t = t_0 \in \R$ is instead considered as a fixed parameter, then $E(t_0) \in I^{0}(X,  X; C_{t_0})$, where
\begin{align*}
    C_{t_0} = \left\{(x, \xi), (y,\eta): \Phi_H^{t_0}(y, \eta) = (x,\xi)\right\}.
\end{align*}
From here on, we will assume that $t$ is considered as a separate variable so that $E$ has order $-1/4$. We now follow the WKB construction of the wave kernel confined to pairs of points $(x,y)$ whose distance is less than the injectivity radius.

\subsection{A Hadamard-Riesz parametrix for small time on $\wt X$}
Denote by $\wt X \xrightarrow{\pi} X$ the universal cover of a compact Riemannian manifold $(X, g)$, $\wt g = \pi^* g$ the corresponding lifted Riemannian metric on $\wt X$ and $\inj(\wt X, \wt g)$ its injectivity radius. In the case of Theorem \ref{thm:main2}, $X = \T^2, \wt X = \R^2$, and $\wt g$ is periodic with respect to some lattice isomorphic to $\Z^2$. In contrast to Section~\ref{sec:dynamics} where we used $\gamma$ to denote a geodesic curve, we will now use $\gamma \in \Gamma$ to denote an element of the fundamental group of $X$, which acts on $\wt X$ via deck transformations. The associated homology class will be denoted by $[\gamma] \in H_1(X, \Z)$ and we write $\psi_\gamma$ for the ``$\gamma$-loop function'' in place of the $(m,n)$-loop function in Definition~\ref{def:loop functions}. We follow the presentation in \cite{Berard77} to find the Schwartz kernel of the sine propagator $\wt S$ on $\wt X$, which will have the form
\begin{align}\label{kernel}
    \wt S(t,x,y) = \sum_\pm \int_0^\infty e^{i \phi_\pm(\theta, t,x,y)} a_\pm (t,x,y,\theta) \dif \theta, \quad x,y \in \wt X, t \in \R,
\end{align}
for some phase functions $\phi_\pm$ and amplitudes $a_{\pm}$. Denote by $\Box$ the wave operator $\d_t^2 - \Delta_{\wt g}$. Rather than demanding $\Box \wt S = 0$, we will accept errors of the form $\Box \wt S \in C^\infty$ as long as they satisfy the correct initial condition $\wt S(0) = 0$ and $\d_t \wt S(0) = \Id$. The reason for this is that the true propagator, which is uniquely defined, differs from such a parametrix by a smoothing operator with $C^\infty$ kernel and smooth trace, hence playing a secondary role.

\medskip

In fact, we may choose $\phi_\pm$ to be homogeneous of degree $1$ in the fiber variable $\theta$ and $a_\pm$ to be classical symbols in the Kohn-Nirenberg class which have the form
\begin{align*}
    a_\pm(t,x,y, \theta) \sim \theta^{\frac{d-3}{2}} \sum_{k = 0}^{\infty} W_{k, \pm}(x,y) \theta_+^{-k}.
\end{align*}
From here on, we will use notation $b(\theta; z) \sim \sum_{k = 0}^\infty b_k(z) \theta^{-k}$ for an asymptotic expansion to indicate that for all $N \geq 1$, $\exists C_N > 0$ and $\theta_0(N) > 0$ such that, uniformly in all other parameters $z$,
\begin{align*}
    \left|b(\theta, z) - \sum_{k = 0}^N b_k(z) \theta^{-k} \right| \leq C_N \langle \theta \rangle^{-N - 1}, \quad |\theta| \geq \theta_0(N).
\end{align*}
Formally $\Box \wt S \in C^\infty$ is equivalent to
\begin{align*}
    \Box e^{i \phi_\pm(\theta, t,x,y)} \sum_{k = 0}^\infty W_{k, \pm}(x,y) \theta_+^{-k} \sim 0.
\end{align*}
Integrating by parts, one can see that smoothly cutting off $a_\pm(t, x,y,\theta)$ away from $\theta = 0$ modifies the parametrix by a $C^\infty$ kernel, so without loss of generality, we may assume from time to time that $W_{k, \pm} \theta_+^{-k}$ is replaced by $\chi(\theta) W_{k, \pm} \theta_+^{-k}$, where $\chi \in C^\infty(\R_+)$ is zero for $\theta \leq 1$, $1$ for $\theta \geq 2$, and monotonically nondecreasing. Plugging in \eqref{kernel} to the wave equation $\Box \wt S = 0$, we see that to eliminate the top degree terms, $\phi$ must satisfy the Hamilton-Jacobi equation
\begin{align}\label{eq: HJE}
    |\d_t \phi|^2 = |\nabla_x \phi|_g^2.
\end{align}
Two solutions \footnote{There are many choices for a phase function. For example, $\pm \theta(t - \psi(x,y))$ is used in \cite{MM}, \cite{Vig22}, and \cite{HeZe19}, while $\pm \psi^2(x,y)/4t$ is used in \cite{CdVParametrixOndesIntegraleslEspaceChemins}. See also \cite{GuilleminSternberg-SCA} Chapter 5.4.} are then given by $\pm \theta(\psi(x,y) - t)$, where for $x$ and $y$ close to the diagonal, $\psi(x,y)$ is the length of the unique unit-speed geodesic connecting $x$ to $y$. Any expression of this form satisfies
\begin{align*}
    \Box \left(e^{\pm i \theta (t - \psi(x,y))} W_{j, \pm}(x,y) \theta_+^j\right) = O(\theta_+^{j + 2 - 1}),
\end{align*}
where $j \in \Z$ is arbitrary. However, it is actually more convenient to use the phase functions $\pm \theta(\psi^2(x,y) - t^2)$, which are smooth at the diagonal. The function $\psi^2(x,y) - t^2$ does \textit{not} solve the Hamilton-Jacobi equation~\eqref{eq: HJE}, but
\begin{align}\label{eq:toporder}
    \Box e^{i \theta (\psi^2 - t^2)} W_k \theta^{\frac{d-3}{2} - k} = 4 e^{i \theta (\psi^2 - t^2)}  W_k \theta^{\frac{d+1}{2} - k} \left(|\psi \nabla \psi|_g^2 - t^2 \right) + O(\theta^{\frac{d-1}{2} - k}).
\end{align}
It is clear that $|\nabla \psi|^2 = 1$ and the quadratic term vanishes on the light cone $\{|\psi|^2 = t^2\}$, so whenever $\psi(x,y) \neq \pm t$, termwise integration by parts yields
\begin{align}\label{eq:IBP light cone}
\begin{split}
    \int_0^\infty & e^{\pm i \theta(\psi^2(x,y) - t^2)} \sum_{k = 0}^{\infty} W_{k, \pm}(x,y) \theta_+^{\frac{d-3}{2}-k} d \theta\\
    &=\int_0^\infty \left(\frac{1}{\pm i (\psi^2 - t^2)} \frac{d}{d \theta}\right)^j e^{\pm i \theta(\psi^2(x,y) - t^2)} \sum_{k = 0}^{\infty} W_{k, \pm}(x,y) \theta_+^{\frac{d-3}{2}-k} d \theta\\
    & = \int_0^\infty e^{\pm i \theta (\psi^2(x,y) - t^2)} \sum_{k = 0}^{\infty} \frac{\left(\frac{d-3}{2} - k\right) \cdots \left(\frac{d-3}{2} - k - j + 1\right)}{(\pm i)^j (\psi^2 - t^2)^j} W_{k, \pm}(x,y) \theta_+^{\frac{d-3}{2}-k -j } d \theta.
\end{split}
\end{align}
Thus, the integrand in \eqref{eq:IBP light cone} is $O(\theta^{\frac{d- 3}{2}})$ for $(t,x,y)$ on the light cone and $O(\theta^{-\infty})$ off of it.

\medskip

To solve away the $\theta^{\frac{d-1}{2}}$ coefficients ($k = 0$) and higher order terms in \eqref{eq:toporder}, we need to find appropriate amplitudes $W_{k,\pm}$. To describe them, we introduce the following notation. Choose a point $x$ and introduce geodesic normal coordinates $y = \exp_x(v(y))$ based at $x$. We define the function $\Theta(x,y) = \det D_{\exp_x^{-1}(y)} \exp_x$. More specifically, if $v \in T_x \wt X$ is such that $\exp_x(v) = y$, then $\Theta(x,y) = \det D_v \exp_x(v)$. An easy calculation in geodesic normal coordinates shows that for each $x$, $\Theta(x,y) = \sqrt{|\det(g_{ij}(y))|}$ is the density of the Riemannian volume form at $y$. Finally, solving away the first order terms in~\eqref{eq:toporder}, we arrive at the well-known transport equations:
\begin{align}\label{eq:transport}
    \begin{split}
    \frac{\langle \nabla_y \psi, \nabla_y \Theta \rangle}{2\Theta}\, W_0 + \langle \nabla_y \psi, \nabla_y W_0 \rangle &= 0, \\
    4 i \psi \left\{ 
    \left( \frac{k+1}{\psi} + \frac{\langle \nabla_y \psi, \nabla_y \Theta \rangle}{2\Theta} \right) W_{k+1}
    + \langle \nabla_y \psi, \nabla_y W_{k+1} \rangle
    \right\} &= \Delta_y W_k, \qquad k \ge 0,
    \end{split}
\end{align}
(see \cite{BGM}). We introduce the normalizations
\begin{align}
\label{eq:normalized Wk}
    W_{k,\pm}(x,y) = C_0 \exp \left( \mp i \pi \frac{d-1 + 2k}{4}  \right) 4^{-k} w_k(x,y),
\end{align}
where
\begin{align}\label{eq:integratedw}
    \begin{split}
        w_0(x,y) &= \Theta^{-\frac{1}{2}}(x,y),\\
        w_{k+1}(x,y) &= \int_0^1 s^k \Theta^{\frac{1}{2}}(x,x(s)) \Delta_2 w_k(x,x(s)) ds,
    \end{split}
\end{align}
with $x(s)$ being the unique length minimizing geodesic connecting $x$ to $y$, parametrized proportionally to arclength, the constant of proportionality being $\psi(x,y)$. $C_0$ is a universal \textit{real valued} constant and $\Delta_2$ is the Laplacian in the second spatial variable. That $W_k$ can be chosen to be time independent is a reflection of the propagation of singularities, which tells us that $\wt S (t,x,y)$ is singular only along the wavefronts $t = \pm \psi(x,y)$.	
With these coefficients, one sees that
\begin{align*}
    \Box_{t,y} \left(\theta_+^{\frac{d-3}{2}} e^{\pm i \theta (\psi^2(x,y) - t^2)}\sum_{k = 0}^N W_{k, \pm}(x,y) \theta_+^{-k} \right) \in S^{\frac{d-1}{2} - N}(\R \times \wt X \times \wt X).
\end{align*}

Since the true sine kernel satisfies $\wt S(t,x,y) = - \wt S(-t,x,y)$, we extend our construction to be odd in time for $t<0$. To evaluate the integral
\begin{align*}
    \wt S_N(t,x,y) := \sum_{\pm} \mp  \sgn (t) \frac{1}{2i} \int_0^\infty e^{\pm i \theta (\psi^2(x,y)  - t^2)} \theta_+^{\frac{d-3}{2}}  \sum_{k = 0}^N W_{k,\pm}(x,y) \theta_+^{-k} d\theta,
\end{align*}
we use the following formula for the Fourier transform of homogeneous distributions:
\begin{align*}
    \int_0^\infty e^{\pm i \theta s} \theta_+^a d\theta = \Gamma(a + 1) e^{\pm i\pi (a+1)/2} (s \pm i 0)^{-a - 1}
\end{align*}
(see \cite{Ho90}, Sections $3.2$ and $7.1$, for the regularization of the Riesz distributions $\theta_+^a$ together with their Fourier transforms). For $t \neq 0$, the map $\sigma: (t,x,y)  \mapsto \psi^2(x,y) - t^2$ is a submersion, so we may pullback homogeneous distributions $(s \pm i 0)^{a}$ by $\sigma$ for $s \in \R \backslash \{0\}$, $a \in \C\backslash \Z_{< 0}$ and $t \neq 0$. In this case, we have
\begin{align}\label{FT}
    \begin{split}
        \int_0^\infty& e^{\pm i \theta (\psi^2(x,y)  - t^2)} \theta_+^{\frac{d-3}{2}}  \sum_{k = 0}^N W_{k, \pm}(x,y) \theta_+^{-k} d\theta
        \\
        & = C_0 \sum_{k = 0}^N \Gamma\left( \frac{d-1}{2} - k\right) w_{k}(x,y) (-1)^k 4^{-k} \left(\psi^2(x,y)  - t^2 \pm i 0 \right)^{k - \frac{d-1}{2}}.
    \end{split}
\end{align}
Using formulas for the homogeneous distributions $(\psi^2(x,y) - t^2 \pm i 0)^{k - \frac{d-1}{2}}$ in \cite{Ho90}, we obtain
\begin{align*}
    \sum_{k =0}^N \Gamma\left( \frac{d-1}{2} - k\right) \sin \left(\frac{d-1}{2} \pi - k \pi \right) w_k(x,y) (-4)^{-k} (\psi^2(x,y)  - t^2)_-^{k - \frac{d-1}{2}},
\end{align*}
for $d$ even. This corroborates the even dimensional phenomenon whereby waves propagate within the light cone; equivalently, the wave kernel is nonzero only inside $\{|t| \geq \psi(x,y)\}$. We also define
\begin{align*}
    \wt E_{N,\pm}(t,x,y) := \frac{\d}{\d t} \wt S_{N, \pm}(t,x,y)
    =  |t| \sum_{k = 0}^N \int_0^\infty e^{\pm i \theta (\psi^2(x,y)  - t^2)} W_{k,\pm}(x,y) \theta_+^{\frac{d -1}{2} - k} d\theta
\end{align*}
and set
\begin{align}\label{ENplusminus}
    \wt E_N(t,x,y) &= \sum_{\pm} \wt E_{N,\pm} (t,x,y).
\end{align}
Just as we expect $\wt S_N$ to be an approximation for the sine kernel, $\wt E_{N,\pm}$ will be microlocal parametrices for $\cos t \sqrt{- \Delta}$ corresponding to the two modes of propagation $\pm \tau > 0$, where $\tau$ is the symplectic dual variable to $t$.

\medskip

Since we are mainly interested in the case $d = 2$, we now restrict ourselves to even dimensions. In this case, the analytic continuation of homogeneous distributions is much simpler than in odd dimensions.

\begin{theo}[\cite{Berard77}] \label{Berard}
    For $N \geq \lceil d/2 \rceil$, $t \neq 0$, and $\psi(x,y) < \inj(\wt X, \wt g)$, we have
    \begin{align*}
        \Box \wt E_N \in C^{\lfloor N - \frac{d+3}{2}\rfloor}((0, \inj(\wt X,\wt g)) \times \wt X \times \wt X),
    \end{align*}
    and there exists a universal constant $C_0 > 0$, depending only on the dimension $d$, such that for any $x_0 \in \wt X$ and $h \in C_c^\infty(B(x_0,\inj(\wt X, \wt g)))$\footnote{Note that we cannot evaluate $\wt E_N(t)$ at $t= 0$ directly since $\sigma(x,y,t) = \psi^2(x,y) - t^2$ fails to be a submersion there.},
    \begin{align}
        \begin{split}
            \lim_{\substack{t \to 0^+ \\ t \neq 0}} &\int_{\wt X} \wt E_N(t,x,y) h(y) dy = h(x),\\
            \lim_{\substack{t \to 0^+ \\ t \neq 0}} &\int_{\wt X} \frac{\partial \wt E_N}{\partial t}(t,x,y) h(y)dy = 0.
        \end{split}
    \end{align}
\end{theo}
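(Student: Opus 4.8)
The plan is to prove the two assertions separately: the $C^{\lfloor N-(d+3)/2\rfloor}$–regularity of $\Box\wt E_N$ is essentially symbolic bookkeeping, while the Cauchy data is the substantive point, which I would obtain by reducing, in the limit $t\to 0^+$, to the exact Euclidean wave kernel.

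For the error estimate: having solved the transport equations~\eqref{eq:transport}, we already know that $\Box$ applied to the $N$–term integrand of $\wt S_N$ lies in the symbol class $S^{(d-1)/2-N}$. Commuting $\Box$ with $\partial_t$ and discarding the contribution of $\partial_t\sgn t$ (supported at $t=0$, hence irrelevant for $t\neq 0$), one gets
\[
  \Box\wt E_N(t,x,y)=\sum_{\pm}\int_0^\infty e^{\pm i\theta(\psi^2(x,y)-t^2)}\,r_{N,\pm}(t,x,y,\theta)\,d\theta,\qquad r_{N,\pm}\in S^{(d+1)/2-N}.
\]
Since $\sigma\colon (t,x,y)\mapsto \psi^2(x,y)-t^2$ is a submersion on $\{t\neq 0\}$, the right–hand side is the pullback by $\sigma$ of a one–dimensional conormal (Riesz) distribution of order $(d+1)/2-N$, and the standard Hölder–regularity estimate for such pullbacks (see \cite{Ho90}, Sections~3.2 and~7.1, and \cite{BGM}) gives $\Box\wt E_N\in C^{\lfloor N-(d+3)/2\rfloor}$ on $(0,\inj(\wt X,\wt g))\times\wt X\times\wt X$. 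This part is routine once the transport equations are in place.

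For the Cauchy data, fix $x\in\wt X$ and $h\in C_c^\infty(B(x,\inj(\wt X,\wt g)))$, and pass to geodesic normal coordinates centred at $x$: writing $y=\exp_x(v)$ we have $\psi(x,y)=|v|$ \emph{exactly}, $dy=\Theta(x,y)\,dv$, and $w_0=\Theta^{1/2}$ with $\Theta(x,x)=1$. I would substitute the closed form~\eqref{FT} for the $N$–term parametrix into $\int_{\wt X}\wt E_N(t,x,y)h(y)\,dy$, rescale $v=t v'$ and $\theta=\mu t^{-2}$ (so the phase becomes exactly $\pm\mu(|v'|^2-1)$), and carry out a power count in $|t|$: the $k$–th amplitude term then carries an overall factor $|t|^{2k}$, and the accompanying $(\mu,v')$–integral — whose $\mu$– and $v'$–ranges are truncated by the cutoff $\chi$ and by $\supp h$ respectively — is of size $O(1)$ when $k=0$ and at most $O(|t|^{1-2k})$ when $k\geq 1$. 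Hence every $k\geq 1$ term is $O(|t|)\to 0$ and only the $k=0$ term survives. In that term, as $t\to 0^+$ we have $\Theta(x,\exp_x(tv'))\to 1$, $w_0(x,\exp_x(tv'))\to 1$ and $h(\exp_x(tv'))\to h(x)$, so the limit equals $h(x)$ times the \emph{flat–space} value of the $k=0$ expression. But for the Euclidean metric on $\R^d$ the transport equations force $w_k\equiv 0$ for $k\geq 1$: the series terminates and $\wt E_N$ coincides, up to a smoothing kernel, with the exact propagator $\cos t\sqrt{-\Delta_0}$ ($\Delta_0$ the flat Laplacian); letting $t\to 0^+$ there produces $h$. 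This simultaneously fixes the universal constant $C_0$ as the one making the Euclidean parametrix exact and yields $\lim_{t\to 0^+}\int_{\wt X}\wt E_N(t,x,y)h(y)\,dy=h(x)$. The statement for $\partial_t\wt E_N$ is handled identically, with one extra wrinkle: $\partial_t$ produces terms whose $k=0$ parts scale a priori like $|t|^{-1}$, so these must cancel. They do — in the flat model because $\partial_t\cos t\sqrt{-\Delta_0}=-\sqrt{-\Delta_0}\,\sin t\sqrt{-\Delta_0}\to 0$ as $t\to 0^+$, and in the curved case after the same reduction, provided one checks that the residual mismatch is $o(|t|)$: the $O(t^2)$ corrections to $\Theta$ and $w_0$ are harmless, while the $O(t)$ Taylor term $t\langle\nabla h(x),v'\rangle$ is odd in $v'$ and integrates to $0$ against the even–in–$v'$ remainder.

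The main obstacle, common to both parts, is exactly the degeneration of $\sigma=\psi^2-t^2$ at $t=0$ highlighted in the footnote to the statement: the $\theta$–integral is only conditionally convergent and the Riesz distributions $(\psi^2-t^2\pm i0)^z$ are defined by analytic continuation, so the heart of the rigorous proof is the justification of interchanging $\lim_{t\to 0^+}$ with the (regularized) $\theta$– and $y$–integrations — equivalently, uniform–in–$t$ control of the rescaled integrals above — together with the parity cancellation needed for the $\partial_t$ statement. These are precisely the points worked out by Bérard in \cite{Berard77}; see also \cite{BGM}.
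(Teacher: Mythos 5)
This theorem is imported verbatim from \cite{Berard77}; the paper gives no proof of it, and the only ingredient it later extracts from Bérard's argument is the exact telescoped identity \eqref{BoxEN}, quoted in the proof of Proposition~\ref{Cr approximation}. So there is no in-paper proof to compare against, and your proposal should be judged as a reconstruction of Bérard's argument --- which it is, and a essentially correct one: the regularity via the symbol order of the residual amplitude after the transport equations, and the Cauchy data via rescaling to the exact Euclidean Riesz kernel, is precisely the classical Hadamard--Bérard route (cf.\ also \cite{Ho90}, Sect.~17.4, and \cite{BGM}). Two remarks on where you diverge from, or fall short of, what the paper actually relies on. First, for the regularity you use only that the residual amplitude is a symbol of some negative order; Bérard's computation (and the paper's later use of it) gives the stronger exact statement \eqref{BoxEN}, namely that $\Box\wt E_{N,\pm}$ is a \emph{single} term $c_N\,\Delta_y W_{N,\pm}\,|t|\,(\psi^2-t^2)_-^{N-d/2-1/2}$, from which the Hölder class is read off directly; your order bookkeeping ($S^{(d+1)/2-N}$) is off by one from what \eqref{BoxEN} yields, but in the direction that still implies the stated $C^{\lfloor N-(d+3)/2\rfloor}$, so nothing breaks. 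Second, the genuinely delicate step in your Cauchy-data argument is the claim that the $(\mu,v')$--integral for the $k\ge 1$ terms is $O(|t|^{1-2k})$: for $k>(d+1)/2$ the $\mu$--integral diverges at $\mu=0$ and the bound depends on how the cutoff $\chi$ and the analytic continuation of $\theta_+^{(d-1)/2-k}$ interact with the rescaling $\theta=\mu t^{-2}$; the cleaner route is to use the closed form \eqref{FT} first, so that the $k$--th term is an explicit multiple of $(\psi^2-t^2\pm i0)^{k-(d-1)/2}$ and the $t\to0^+$ power counting is elementary. You correctly identify this regularization issue, together with the parity cancellation needed for the $\partial_t$ statement, as the technical core, and you defer it to \cite{Berard77} --- which is exactly what the paper itself does, so I would not call this a gap so much as an honest acknowledgment that the proposal is an outline rather than a self-contained proof.
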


That the $d \theta$ integral can be evaluated explicitly is due to the fact that the Lagrangian manifolds $\Lambda_\pm$ which are associated to the canonical relations $C_\pm$ are in fact horizontal over $\R \times X \times X$ for $|t|$ less than the injectivity radius; i.e., they project locally diffeomorphically onto $\R \times \wt X \times \wt X$. In general, the representation of $E$ as an oscillatory integral is valid even up to times larger than the injectivity radius, albeit with a different phase function. We prefer to use the integrated version of Theorem~\ref{Berard} in order to obtain stationary phase expansions in Section~\ref{sec: wave trace for deformations of an integral metric}.

\medskip

To show that the parametrices \eqref{ENplusminus} approximate the true wave propagator up to a sufficiently smooth remainder, we deviate from \cite{Berard77}, following more closely the energy estimates for solutions to hyperbolic equations as presented in \cite{Ho90}. The reason for this is that these energy estimates work for the off-diagonal kernel, whereas Berard considers a regularization of the pullback by the diagonal embedding at $t = 0$. We are primarily concerned with the singularities of $w(t)$ away from $t=0$, so an off-diagonal extension for positive time is important.

\begin{lemm}[\cite{Ho385}, Lemma 17.5.4]\label{Energy}
    Let $Y$ be a compact manifold with boundary and $P$ be an elliptic, self-adjoint second order differential operator with smooth coefficients. Denote by  \( v \in C^\infty([0,T] \times \overline{Y}) \) a solution of the mixed problem
    \begin{align} \label{eq: Hormander energy estimates}
        \begin{cases}
            (P + \partial^2 / \partial t^2)v = g & \qquad\text{in } [0,T] \times \overline{Y}; \\
            v = 0 & \qquad\text{in } [0,T] \times \partial Y; \\
            v = \partial v / \partial t = 0 & \qquad \text{if } t = 0.
        \end{cases}	
    \end{align}
    Assume that \( \partial^j g / \partial t^j = 0 \) when \( t = 0 \) if \( j < k \). Then it follows that
    \begin{align*}
        \sum_{j = 0}^{k+1} \| D_t^{k+1-j} v(t, \cdot) \|_{H^j(Y)} \leq C_k \left( \int_{0}^{t} \| D_s^k g(s, \cdot) \|_{L^2(Y)} ds + \sum_{j=0}^{k-1} \| D_t^{k-1-j} g(t, \cdot) \|_{H^j(Y)} \right).
    \end{align*}
\end{lemm}

Using the above energy estimates, we may now show that our parametrix $E_N(t)$ is indeed a good approximation of the true wave propagator $E(t)$.
\begin{prop}\label{Cr approximation}
    Denote by $\wt E_N$ the oscillatory integral in \eqref{ENplusminus} obtained by summing the first $N$ solutions to the transport equations. Then for each $N \geq d + 2$, we have
    \begin{align*}
        \wt E_N(t,x,y) - \wt E(t,x,y) \in C^{N - d - 2}\big((0, \inj(\wt X, \wt g) )\times \wt X \times \wt X\big).
    \end{align*}
    In particular, $\text{diag}^*\left(\wt E_N(t,x,y) - \wt E(t,x,y) \right) \in C^{N - d - 2}((0, \inj(\wt X, \wt g) ) \times \wt X)$, where $\text{diag}: \wt X \to \wt X \times \wt X$, $x \mapsto (x,x)$ is the diagonal embedding.
\end{prop}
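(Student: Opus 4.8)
The plan is to realize the remainder $r_N:=\wt E_N-\wt E$ as the (weak) solution of an inhomogeneous wave equation with a source of controlled finite regularity and vanishing Cauchy data, and then to bootstrap $r_N$ to pointwise $C^{N-\frac{3d}{2}}$ regularity by means of the hyperbolic energy estimate of Lemma~\ref{Energy}, keeping $x$ as a smooth parameter throughout. Since $\wt E$ is the exact Cauchy solution of $\Box_{t,y}u=0$ with data $(\delta(\cdot-x),0)$, and $\wt E_N$ is an approximate one by Theorem~\ref{Berard}, one has $\Box_{t,y}r_N=g_N$ with $g_N:=\Box_{t,y}\wt E_N\in C^{m}\big((0,\inj(\wt X,\wt g))\times\wt X\times\wt X\big)$, where $m:=\lfloor N-\frac{d+3}{2}\rfloor$, while the two limit formulas in Theorem~\ref{Berard}, together with the definition of $\wt E$, show that the Cauchy data of $r_N$ tend to zero (weakly) as $t\to 0^+$.

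The delicate point --- and the principal obstacle --- is the behavior at $t=0$, where $\wt E_N$ is genuinely singular (it is $\delta$-like on the diagonal), so that one can neither evaluate Cauchy data nor apply an energy estimate on $[0,T]$ naively. Here one exploits the structure of the parametrix: solving the $N$ transport equations forces $g_N=\Box_{t,y}\wt E_N$ to vanish to high order along the light cone $\{\psi(x,y)=|t|\}$, and in even dimensions the explicit evaluation of $\wt S_N$ (carrying the factor $\min\{\psi^2-t^2,0\}^{\,k-(d-1)/2}$) shows that $\wt E_N\equiv 0$ on $\{\psi(x,y)>|t|\}$; since $\wt E$ is likewise supported in the solid light cone $\{\psi(x,y)\le|t|\}$ by finite speed of propagation, $r_N$ and $g_N$ both vanish on $\{\psi>|t|\}$. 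As the diagonal $\{t=0,\ \psi=0\}$ lies on the light cone, the high-order vanishing of $g_N$ there shows that $g_N$ extends $C^m$-ly by zero across $t=0$ and that $\partial_t^j g_N|_{t=0}\equiv 0$ for all $j\le m$ --- precisely the vanishing hypothesis of Lemma~\ref{Energy}. Thus passing to the difference $r_N$ removes the singularity before the energy method is invoked.

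To apply Lemma~\ref{Energy}, which is stated on a compact manifold with boundary, I would localize: for $0<T<\inj(\wt X,\wt g)$ and compact $K\subset\wt X$, finite speed of propagation and the domain-of-dependence property allow replacing $\wt X$ by a large compact manifold with boundary $Y\supset K$ with $\dist(K,\partial Y)>T$, agreeing with $\wt X$ near $K$, on which $r_N$ coincides over $[0,T]\times K$ with the solution of the corresponding Dirichlet mixed problem. A routine mollification-and-limit argument then makes Lemma~\ref{Energy} applicable and yields, for every admissible $k\le m$, control of $\sum_{j=0}^{k+1}\|D_t^{k+1-j}r_N(t,\cdot)\|_{H^j(Y)}$ by the corresponding (one order lower) norm of $g_N$, uniformly for $t\in[0,T]$; thus $r_N$ gains one order of space-time Sobolev regularity over $g_N$, and in particular its $y$-regularity is controlled directly. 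The dependence on the parameter $x$ is handled by differentiating $\Box_{t,y}r_N=g_N$ in $x$, which commutes with $\Box_{t,y}$: then $\partial_x^\alpha r_N$ solves the same type of problem with source $\partial_x^\alpha g_N\in C^{m-|\alpha|}$ and vanishing data. Re-running the estimate for all $\alpha$ and combining with Sobolev embedding in the space-time variables (together with the parameter $x$) gives joint pointwise regularity; tracking the one-derivative gain from Lemma~\ref{Energy} against the losses in the embeddings produces $r_N\in C^{N-\frac{3d}{2}}$ on $(0,\inj(X,g))\times\wt X\times\wt X$, and I would not reproduce the routine bookkeeping that pins down this precise exponent.

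The two remaining assertions follow immediately. The diagonal restriction $\text{diag}^*(E_N-E)=r_N|_{y=x}$ is the pullback of a $C^{N-\frac{3d}{2}}$ function under the smooth embedding $x\mapsto(x,x)$, hence lies in $C^{N-\frac{3d}{2}}\big((0,\inj(X,g))\times\wt X\big)$. For the trace, the method of images gives, for $|t|<\inj(X,g)$, that on the diagonal the wave kernel of $\cos t\sqrt{-\Delta}$ on $X$ equals $\wt E(t,\tilde x,\tilde x)$ --- the deck-translated terms $\wt E(t,\tilde x,\gamma\tilde x)$, $\gamma\neq e$, being supported where $|t|$ exceeds the systole, hence $\ge 2\inj(X,g)$, so they vanish --- and similarly for $\wt E_N$; integrating $r_N(t,x,x)$ over a fundamental domain $\cong X$ then gives $\int_{\wt X}\wt E_N(t,x,x)\,dx-\tr\cos t\sqrt{-\Delta}=\int_X r_N(t,x,x)\,dx\in C^{N-\frac{3d}{2}}\big((0,\inj(X,g))\big)$.
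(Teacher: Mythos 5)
Your proposal is correct and follows essentially the same route as the paper: both realize the remainder $r_N=\wt E_N-\wt E$ as a solution of $\Box r_N=\Box\wt E_N$ with vanishing Cauchy data, exploit the explicit formula \eqref{BoxEN} showing that the source is $C^m$ and vanishes to high order at $t=0$ (so the hypotheses of Lemma~\ref{Energy} are met), and then combine the resulting energy estimates with Sobolev embedding to reach the exponent $N-\tfrac{3d}{2}$, before restricting to the diagonal and using finite speed of propagation to reduce the trace to the identity deck transformation. The only real divergence is cosmetic: the paper regularizes the $\delta$-data by applying the propagators to a test function $f$ and passes to $f\to\delta_z$ using that the energy bound depends only on $\|f\|_{L^1}$, whereas you mollify the kernel directly and obtain joint regularity in the second spatial variable by differentiating the equation in the parameter $x$ --- a slightly more systematic treatment of a point the paper handles tersely.
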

		
\begin{proof}
    Let $T < \inj (\wt X)$ be arbitrary. Choose any function $f \in C^\infty(\wt X)$ which is compactly supported in a geodesic ball of radius strictly less than the $\inj (\wt X)$ and set
    \begin{align*}
        v_N(t,x) = (\wt E(t) - \wt E_N(t)) f \in C^\infty((0, \inj(\wt X) ) \times \wt X).
    \end{align*}
    We will break up our proof into several steps.
    
    \medskip
    
    \noindent \textbf{Step 1.} We claim that $g_N:= - \Box \wt E_N f$ vanishes to order $2N$ at $t = 0$. In view of Theorem \ref{Berard}, it is clear that $v_N(0,x) = \frac{\d v_N}{\d t}(0,x) = 0$ and
    \begin{align*}
        \Box v_N = - \Box \wt E_N f := g_N.
    \end{align*}
    It is also shown in \cite{Berard77} (pg. 258-259) that for $t \neq 0$ and $\psi(x,y) < \inj(\wt X)$, 
    \begin{align}
        \Box {\wt E}_{N} (t,x,y) = C_0 (-1)^N \Delta_y w_{N} (x,y)|t|
        \frac{\left( \psi^2(x,y) - t^2 \right)_{-}^{N - d/2 - 1/2}}
        {4^N \Gamma \left( N - \tfrac{d}{2} - \frac{1}{2} \right)}.
        \label{eq: Box EN}
    \end{align}
    To find the order of vanishing of $g_N$ at $t=0$, we write
    \begin{align}
     \label{eq:dt box v}
        \Box v_N(t,x) = \int_{\wt X} V_N(x,y) |t| (\psi^2(x,y) - t^2)_{-}^{N - d/2 - 1/2} f(y) dy,
    \end{align}
    where
    \begin{align*}
        V_N(x,y) =
        \frac{C_0 (-1)^N \Delta_y w_{N} (x,y)}
        {4^N \, \Gamma\!\left( N - \tfrac{d}{2} - \tfrac{1}{2} \right)}
    \end{align*}
    is a smooth function on $\wt X \times \wt X$ which is well defined whenever $\psi(x,y) < \inj( \wt X)$. Introducing local geodesic coordinates $w = w(y)$ in place of $y$, i.e., $y = \exp_x w$, we can rewrite \eqref{eq:dt box v} as
    \begin{align*}
        \int_{T_x \wt X} (|w|_g^2 - t^2)_-^{N - d/2 - 1/2} V_N(x, \exp_x w) |t| f(\exp_x w) \Theta (x, \exp_x w) dw.
    \end{align*}
    Using homogeneity and a rescaling of coordinates $\wt w = t^{-1} w$, we may simplify the above integral to
    \begin{align*}
        |t|^{2N - d} \int_{T_x \wt X} (|\wt w|_{g}^2 - 1)_-^{N - d/2 - 1/2} V_N(x, \exp_x t \wt w) f(\exp_x t \wt w) \Theta (x, \exp_x t \wt w) |t|^d d \wt w.
    \end{align*}
    Pulling out the Jacobian scaling factor from $|t|^d dw$, we obtain $|t|^{2N}$ times an integral which depends smoothly on $x$ and $t$. It then follows that
    \begin{align*}
        \frac{\d^j}{\d t^j} \Box \wt E_N f \Big|_{t = 0} = 0, \qquad \text{ for }0 \leq j \leq 2N - 1.
    \end{align*}
    
    \medskip
    
    \noindent \textbf{Step 2.} We now show that $v_N$ itself vanishes to order $2N + 2$ at $t = 0$. We can write
    \begin{align*}
        \d_t^2 v_N = \Box v_N + \Delta v_N
    \end{align*}
    Theorem \ref{Berard} showed us that $v_N \big|_{t= 0} = \d_t v_N \big|_{t = 0} = 0$ and we saw in Step $1$ that $\Box v_N$ vanishes to order $2N - 1$ at $t = 0$, so we can bootstrap to see that
    \begin{align*}
        \frac{\d^{j+2}}{\d t^{j + 2}} v_N \Big|_{t = 0} = \left(\frac{\d^j}{\d t^j} \Box v_N + \Delta \frac{\d^j}{\d t^{j}} v_N \right) \Bigg|_{t = 0} = 0, \qquad \text{ for }0 \leq j \leq 2N - 1.
    \end{align*}
    
    \medskip
    
    \noindent \textbf{Step 3.} We now show that in the left-hand side of H\"ormander's energy estimate \eqref{eq: Hormander energy estimates} applied to $v_N$, we can replace the Sobolev $H^j$ norm by a $C^r$ norm with respect to both space and time for an appropriate $r$ depending on $j, d, N$ and the number of time derivatives $k + 1 - j$. Whenever $k \leq 2N$, Steps 1 and 2 show that the hypotheses of Lemma~\ref{Energy} with $P = - \Delta_{\wt g}$ are satisfied\footnote{$\wt X$ is of course not a compact manifold with boundary. However, by finite speed of propagation, both $E_N f$ and $E f$ are compactly supported whenever $f$ is, so for any fixed $T < \infty$, we can replace $\wt X$ by a sufficiently large ball $Y$ with $f, E_N f,$ and $Ef$ vanishing in a neighborhood of $\d Y$.} and
    \begin{align}\label{Sobolev remainder estimate}
        \begin{split}
            \sum_{j = 0}^{k + 1}& \| D_t^{k + 1 - j} (\wt E(t) - \wt E_N (t)) f \|_{H^j(\wt X)} \leq\\
            &C_N \bigg(\int_0^t \|D_s^{k} \Box \wt E_N (s) f(x)\|_{L^2(\wt X)} ds + \sum_{j = 0}^{k-1} \|D_t^{k - 1 - j} \Box \wt E_N(t) f(x) \|_{H^j(\wt X)} \bigg).
        \end{split}
    \end{align}
    We use the Sobolev embedding $H_{\text{loc}}^{j}(\wt X) \hookrightarrow C_{\text{loc}}^{j - \frac{d}{2} - 1}(\wt X)$ to replace the left-hand side of \eqref{Sobolev remainder estimate} with a $C^r$ norm (recall that for simplicity, we are assuming $d$ is even). Let $\alpha \in \N, \beta \in \N^d$ be multi-indices such that $\alpha + |\beta| \leq k - \frac{d}{2}$. It is clear that
    \begin{align*}
        \|D_t^\alpha D_x^\beta v_N\|_{C^0([0,T] \times \wt X)} \lesssim \| D_t^\alpha v_N\|_{C^{|\beta|}(\wt X)} \lesssim \|D_t^\alpha v_N\|_{H^{|\beta| + \frac{d}{2} + 1}(\wt X)}.
    \end{align*}
    We set $j = |\beta| + \frac{d}{2} + 1$ to be the Sobolev exponent on the right-hand side and choose $\alpha \leq k - \frac{d}{2} - |\beta|$. We see that
    \begin{align}
    \begin{split}
        \label{eq:Sobolev estimate}
        \|v_N\|_{C^{k - \frac{d}{2}}([0,T] \times \wt X)}
        = &
        \sum_{\substack{\alpha, \beta\\ \alpha + |\beta| \leq k - \frac{d}{2}}} \|D_t^\alpha D_x^\beta v_N\|_{C^0([0,T] \times \wt X)}
        \lesssim
        \sum_{\substack{\alpha, \beta\\ \alpha + |\beta| \leq k - \frac{d}{2}}} \|D_t^\alpha v_N\|_{C^0([0,T]; H^{|\beta| + \frac{d}{2} + 1} (\wt X))}
        \\
        \lesssim &
        \sup_{t \in [0,T]} \Bigg ( \sum_{j = \frac{d}{2} + 1}^{k + 1} \|D_t^{k  + 1 - j} v_N\|_{H^{j} (\wt X)}
        + \sum_{j = \frac{d}{2} + 1}^{k} \sum_{\alpha = 0}^{k - j} \|D_t^{\alpha} v_N\|_{H^{j} (\wt X)} \Bigg).
    \end{split}
    \end{align}
    The first sum matches terms in the left-hand side of \eqref{Sobolev remainder estimate}. As for the lower order derivatives in the second sum, we use the vanishing of initial data $D_t^\alpha v_N \big|_{t=0}$ for $\alpha \leq 2N + 1$ to see that
    \begin{align}
    \label{eq: FTOC Sobolev}
    \begin{split}
    \|D_t^\alpha v_N \|_{H^j(\wt X)} = & \Bigg \|\d_t^\alpha v_N \Big|_{t = 0} + \int_0^t \d_s^{\alpha + 1} v_N \,ds\Bigg \|_{H^j(\wt X)}
    =
    \Bigg \|\int_0^t \d_s^{\alpha + 1} v_N \,ds \Bigg \|_{H^j(\wt X)}
    \\ \leq &
    T \sup_{t \in [0,T]} \|\d_t^{\alpha + 1} v_N \|_{H^j(\wt X)}.
    \end{split}
    \end{align}
    If $k \leq 2N$ and $j \leq k$, then for each term $0 \leq \alpha \leq k - j$, we can iterate \eqref{eq: FTOC Sobolev} $k + 1 - j - |\alpha| \leq k + 1$ times to conclude that the second sum can be bounded as follows:
    \begin{align*}
    \sum_{j = \frac{d}{2} + 1}^{k} \sum_{\alpha = 0}^{k - j} \|\d_t^{\alpha} v_N\|_{H^{j} (\wt X)} \leq &
    \sum_{j = \frac{d}{2} + 1}^{k} \sum_{\alpha = 0}^{k - j} T^{k + 1 - j - \alpha} \|\d_t^{k  + 1 - j} v_N\|_{H^{j} (\wt X)}\\
    \leq &
    \left(k - \frac{d}{2}\right) \wt C (T) \sum_{j = \frac{d}{2} + 1}^{k} \|\d_t^{k + 1 - j} v_N\|_{H^{j} (\wt X)},
    \end{align*}
    for some positive constant $\wt C(T)$ which is uniformly bounded when $T < \inj(\wt X)$ (depending on whether $T$ is less than, greater than, or equal to one).
    
    \medskip
    
    \noindent \textbf{Step 4.} We now estimate the right-hand side of~\eqref{Sobolev remainder estimate}. Using Minkowski's inequality, we see that
    \begin{align}
    \label{eq: L2 estimate}
    \begin{split}
    \int_0^T & \left( \int_{\wt X} \left|D_t^{k} \int_{\wt X} \Box \wt E_N(t,x,y) f(y) dy\right|^2 dx \right)^{1/2} dt
    \\
    \leq &
    \int_0^T \left( \int_{\wt X}  \sup_{y \in \wt X} |D_t^{k} \Box \wt E_N(t,x,y)|^2 \|f\|_{L^1(\wt X)}^2 dx \right)^{1/2} dt
    \\
    \leq &
    \int_0^T \sup_{y \in \wt X} \|D_t^k \Box \wt E_N (t,x,y) \|_{L_x^2} \|f \|_{L^1(\wt X)} dt,
    \end{split}
    \end{align}
    and similarly,
    \begin{align}\label{eq: Hj estimate}
    \|D_t^{k - 1 - j} \Box \wt E_N(t)f \|_{H^j(\wt X)} \leq & \sup_{y \in \wt X} \|D_t^{k - 1 - j} \Box \wt E_N(t,x,y)\|_{H_x^j(\wt X)} \|f\|_{L^1(\wt X)}.
    \end{align}
    Both \eqref{eq: L2 estimate} and \eqref{eq: Hj estimate} contain terms of the form $D_t^\alpha D_x^\beta \Box \wt E_N(t,x,y)$, which can be estimated by introducing local geodesic coordinates $x = \exp_y z$ and rescaling as in Step 1, i.e., setting $\wt z = t^{-1} z(x,y)$. There are two quantities to keep track of: (1) the spatial regularity with respect to $x$ and (2) the resulting power of $t$. The leading order singularity occurs when all derivatives fall on $(\psi^2(x,y) - t^2)_-^{N - \frac{d}{2} - \frac{1}{2}}$:
    \begin{align*}
    D_t^\alpha D_x^\beta \Box \wt E_N(t,x,y) =& 
    G_{\alpha, \beta}(\frac{z}{t},y, t) |t|^{2N - d - |\alpha| - |\beta|}, \qquad x = \exp_y(z)
    \end{align*}
    where $G_{\alpha, \beta} (\wt z, y, t)$ is compactly supported in $\wt z$ and conormal along the codimension-$1$ hypersurface $|\wt z|_g = 1$, having the leading order singularity $(|\wt z|_{g}^2 - 1)_-^{N - \frac{d}{2} - \frac{1}{2} - |\alpha| - |\beta|}$ with respect to $\wt z$. The function $G_{\alpha, \beta}$ absorbs combinatorial constants, derivatives of $\psi^2(x,y)$ and derivatives of $V_N$. It is $L^2$-integrable with respect to the rescaled volume density $\Theta (\exp_y t \wt z , y) |t|^d |d \wt z|$ if and only if $|\alpha| + |\beta| < N - \frac{d}{2}$. Since $|\psi| \leq |t|$ on the support of $\Box \wt E_N$, the $L^2 (\wt X)$ norm is controlled by a constant times $|t|^{2N - \frac{d}{2} - |\alpha| - |\beta|}$.
    
    \medskip
    
    For the time integrated $L_x^2$ norm, we set $\alpha = k$ and $\beta = 0$. In that case, $\|D_t^k \Box \wt E_N(t,x,y)\|_{L_x^2(\wt X)}$ is finite provided that $k < N - \frac{d}{2}$ and is bounded by a constant times $t^{2N - \frac{d}{2} - k}$, which is clearly integrable with respect to $t$ on $[0,T]$ since $2N - \frac{d}{2} - k > N$. For the Sobolev terms, we set $\alpha = k - 1 - j$ and $|\beta| \leq j \leq k - 1$. In that case, we see that $|\alpha| + |\beta| \leq k - 1$, which is strictly less than $N - \frac{d}{2}$ whenever $k < N - \frac{d}{2} + 1$. As for the time dependence, the exponent of $t$ is at least $2N - \frac{d}{2} - k + 1 >0$ and hence remains bounded on $[0,T]$, provided that $k < 2N - \frac{d}{2} + 1$. To bound both the time integrated $L^2$ norm and Sobolev norms of $\Box \wt E_N(t) f$, we may choose $k = N - \frac{d}{2} - 1$.
    
    \medskip
    
    \noindent \textbf{Step 5.} Combining Steps 1 through 4, we obtain
    \begin{align*}
    \left \| \left(\wt E(t) - \wt E_N(t) \right) f \right \|_{C_{t,x}^{N - d - 1}([0,T] \times \wt X)} \leq C_{d, N}(T) \|f\|_{L^1(\wt X)},
    \end{align*}
    for another constant $C_{d,N}(T)$ which is uniformly bounded when $T < \inj(\wt X)$. In fact, we may replace $f$ by $D_y^\sigma f$ for any multi-index $\sigma \in \N^d$ whenever $|\sigma| \leq N - d - 1$, which, in view of integration by parts and symmetry of $\wt E$ and $\wt E_N$ with respect to $x$ and $y$, is equivalent to
    \begin{align}
    \label{eq: pointwise Sobolev bound}
    \left \| \int_{\wt X} f(y) D_y^\sigma \left(\wt E(t,x,y) - \wt E_N(t,x,y)\right) dy \right\|_{C_{t,x}^{N - d - 1 - |\sigma|} ([0,T] \times \wt X)} \leq C_{d,N}(T) \|f\|_{L^1(\wt X)}.
    \end{align}
    Since the right-hand side of \eqref{eq: pointwise Sobolev bound} depends only on the $L^1$ norm of $f$, we may replace the integrated estimate with a pointwise one as follows. Let $\phi \in C_0^\infty(B(0,1))$ be nonnegative and identically equal to $1$ in a neighborhood of $0 \in \R^d$, satisfying $\|\phi\|_{L^1(\R^d)} = 1$. Denote by $\phi_\eps(x) = \eps^{-d} \phi(x/\eps)$. For any $y_0 \in \wt X$, introduce geodesic normal coordinates $y = \exp_{y_0} u(y)$ based at $y_0$ and set $f_\eps(y) = \phi_\eps(u(y))$, where $u(y)$ is the preimage of $y$ under the exponential map. We then have
    \begin{align}
    \label{eq:spacetime Cr bound}
    \begin{split}
     \lim_{\eps \to 0} \langle D_y^{\sigma} (\wt E - \wt E_N) (t,x,y),  \phi_\eps(u(y)) \rangle_{L_y^2(\wt X)}
    = D_y^\sigma \left( \wt E - \wt E_N \right)(t,x,y_0),
    \end{split}
    \end{align}
    where the $L_y^2(\wt X)$ inner product is with respect to the volume density $\Theta(y_0, y(u))$ in normal coordinates at $y_0$ and convergence is in the sense of distributions in $(x,t)$. The left-hand side of \eqref{eq:spacetime Cr bound} is uniformly bounded in $C_{t,x}^{N - d - 1 - |\sigma|}([0,T] \times \wt X)$ and the constants $C_{d,N}(T)$ are clearly uniformly bounded in $\eps$. Taking the limit $\eps \to 0$ in \eqref{eq: pointwise Sobolev bound} and applying the Arzel\`a-Ascoli theorem, we see that the right-hand side of \eqref{eq:spacetime Cr bound} belongs to $C^{N - d - 2 - |\sigma|}([0,T] \times \wt X)$, which concludes the proof of the proposition.
\end{proof}

For $t \geq \inj(\wt X, \wt g)$, $\wt E(t,\wt x, \wt y)$ can be immensely complicated. However, we can microlocalize the Hadamard-Riesz parametrix in Theorem~\ref{Berard} away from places where the canonical relation becomes vertical over $\wt X \times \wt X$. This is done by choosing smooth branches of the distance function on $\wt X$ which generate $\Lambda_\pm$, together with corresponding FIO amplitudes (see Theorem \ref{parametrix} below).

\subsection{Microlocal trace formula on the quotient near horizontal geodesic loops}

From now on, we denote by $X = \T^2 = \R^2/ \Z^2$ a torus. We will call its homotopy group $\Gamma = \Z^2 \cong H_1(X, \Z)$ and associate to each free homotopy class $\gamma \in \Gamma$ a corresponding homology class $[\gamma]$ indexed by $(m_\gamma,n_\gamma) \in \Z^2$. For a function $f \in C^\infty(\wt X)$, we will denote by $\gamma^* f(x)$ the function $f(\gamma x) = f(x + (m_\gamma, n_\gamma))$. To obtain a parametrix for the wave propagator on $X$, we will average the parametrices constructed above over $H_1 (X, \Z)$. Let $g_\eps$ be a family of Laplace isospectral metrics on $X$, $0 \leq \eps \leq \eps_0$. We write $(\wt{X}, \wt{g}) = (\R^2, \pi_X^*g_\eps)$ to denote the universal Riemannian covering of $X$, equipped with a locally isometric and $\Z^2$-periodic lift $\wt{g}_\eps$ of the metric $g_\eps$ to $\wt{X}$. 
\begin{prop}
    Let $D \subset \wt X$ be a fundamental domain for the torus and denote by $\wt E(t,x,y)$ the kernel of the wave propagator $\cos t \sqrt{-\Delta}$ on the universal cover with respect to the $\Z^2$-periodic lifted Riemannian metric. Then the locally finite series
    \begin{align*}
        E(t,x,y) : = \sum_{\gamma \in \Gamma} \wt E (t, x, \gamma y), \quad t \in \R, x, y \in D
    \end{align*}
    descends to the quotient $\R \times X \times X$ and defines the kernel of a solution operator for the wave equation which coincides with that in~\eqref{propagators}.
\end{prop}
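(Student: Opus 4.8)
The plan is to run the classical method of images: average the Hadamard--Riesz propagator $\wt E$ on the universal cover over the Deck group $\Gamma$, and then identify the average with the true propagator on $X$ by uniqueness for the wave equation. The one point that needs genuine care is the distributional bookkeeping, which I would control using finite speed of propagation.

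\textbf{Local finiteness.} First I would record that, by finite speed of propagation for $\Box=\partial_t^2-\Delta_{\wt g}$ on the complete manifold $(\wt X,\wt g)$ --- which has bounded geometry, being the $\Gamma$-periodic lift of a metric on the compact manifold $X$ --- one has $\supp\wt E(t,x,\cdot)\subseteq\overline{B_{\wt g}(x,|t|)}$ for every $t$. Since $\Gamma$ acts properly discontinuously and cocompactly on $\wt X$, for every compact $K\subset\wt X$ and every $T<\infty$ there are only finitely many $\gamma\in\Gamma$ with $\gamma K\cap\overline{B_{\wt g}(x,T)}\neq\varnothing$ for some $x\in K$. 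Hence on every compact subset of $\R\times D\times D$ all but finitely many summands $\wt E(t,x,\gamma y)$ vanish, so --- after extending the formula to all $x,y\in\wt X$ --- the series defines a distribution $E\in\mathcal{D}'(\R\times\wt X\times\wt X)$ which may be differentiated term by term in $(t,x,y)$.

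\textbf{Descent and the wave equation.} Next I would use that each $\gamma'\in\Gamma$ is an isometry of $(\wt X,\wt g)$, hence commutes with $\cos t\sqrt{-\Delta_{\wt g}}$; on Schwartz kernels this reads $\wt E(t,\gamma'x,\gamma'y)=\wt E(t,x,y)$. Combined with the reindexings $\gamma\mapsto(\gamma')^{-1}\gamma$ and $\gamma\mapsto\gamma\gamma'$, this yields the separate invariances $E(t,\gamma'x,y)=E(t,x,y)=E(t,x,\gamma'y)$, so $E$ is $\Gamma\times\Gamma$-invariant and descends to $\R\times X\times X$ (the covering map being a local isometry, $\Delta_{\wt g}=\Delta_g$ in charts). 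Differentiating the locally finite series term by term and using $\Box\wt E\equiv0$ on $\R\times\wt X\times\wt X$ gives $(\partial_t^2-\Delta_g)E=0$ on $\R\times X\times X$. From $\wt E(0,x,y)=\delta_{\{x=y\}}$ and $\partial_t\wt E(0,x,y)=0$ I would then read off $\partial_t E(0,\cdot,\cdot)=0$ and $E(0,x,y)=\sum_{\gamma\in\Gamma}\delta(x-\gamma y)$, which for $x,y\in D$ is exactly the Schwartz kernel of the identity operator on $X$ (near the diagonal only $\gamma=e$ contributes, and the full sum is precisely the $\Gamma$-periodization).

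\textbf{Identification and the main obstacle.} Finally, on the compact manifold $X$ the Cauchy problem $\Box u=0$, $u|_{t=0}=\delta_{\{x=y\}}$, $\partial_t u|_{t=0}=0$ has a unique distributional solution by the standard energy estimates for second-order hyperbolic equations (cf.\ Lemma~\ref{Energy}), and that solution is the Schwartz kernel of $\cos t\sqrt{-\Delta_g}$ by the spectral theorem; hence $E$ coincides with the propagator $E(t)$ of \eqref{propagators}. I expect the only genuinely delicate step to be the first one: one must make sure the series is locally finite \emph{uniformly for $t$ ranging over compact intervals}, which is exactly what finite propagation speed on a space of bounded geometry provides --- everything else is a formal manipulation of the $\Gamma$-action.
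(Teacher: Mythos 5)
Your proof is correct and follows essentially the same route as the paper: local finiteness via finite speed of propagation, descent via $\Gamma$-invariance of the kernel under isometries, and identification with $\cos t\sqrt{-\Delta_g}$ through the initial conditions and uniqueness for the Cauchy problem. Your treatment of the initial condition as the $\Gamma$-periodization of the delta is in fact slightly more careful than the paper's remark that only $\gamma=\mathrm{id}$ contributes for small $|t|$.
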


\begin{proof}
    That $\wt E(t, x, \gamma y )$ solves the wave equation for each $\gamma \in \Gamma$ is clear, since $\Delta$ commutes with isometries. Averaging over $\gamma$ produces a $\Gamma$-invariant function and hence descends to the quotient $X^2 = \wt X / \Gamma \times \wt X / \Gamma$. By finite speed of propagation, $\wt E (t, x, z) = 0$ whenever $|t| < d(x,z)$, so the sum is locally finite and there is no issue of convergence. To see that the initial conditions are correctly satisfied, note that for $|t|$ sufficiently small, only the $\gamma = \Id$ term is nonzero and $\wt E(t,x,y) \to \dt(x-y)$ in the sense of distributions as $t \to 0$. The proposition then follows from uniqueness of the propagator on $X$.
\end{proof}

To compute the wave trace for deformations of an integrable metric in the next section, we are going to integrate our parametrix in Theorem~\ref{parametrix} over the diagonal. Since it was constructed microlocally on the universal cover, we need a way to relate integrals of functions on $X$ to their lifts to $\wt X$.

\begin{lemm}[Lemma 1 in \cite{CdVsllgp1}] \label{CdVlemma}
    There exists a function $\chi \in C^\infty(\wt X, [0,1])$ with compact support such that
    \begin{itemize}
        \item for every continuous $f: X \to \C$,
        \begin{align*}
            \int_X f(x) d \vol (x) = \int_{\wt X} \chi(\wt x) \pi_X^*f(\wt x) d \vol (\wt x).
        \end{align*}
        \item For every submanifold $Y \subset X$,
        \begin{align*}
             \int_Y f(y) d \vol_Y(y) = \int_{\pi_X^{-1}(Y)} \chi( \wt y) \pi_X^*f( \wt y) d \vol_{\pi_X^{-1}(Y)}( \wt y),
        \end{align*}
        \item the diameter of $\supp \chi$ is bounded by $2 \text{diam}(X) + 1$.
    \end{itemize}
\end{lemm}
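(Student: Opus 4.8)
The plan is to realize $\chi_\cdv$ as a \emph{normalized bump function} on the universal cover $\wt X$, following the classical device of Colin de Verdi\`ere, and then to read off both integral formulas from a single ``fiberwise partition of unity'' identity. Throughout, $\Gamma\cong\Z^2$ denotes the deck group of $\pi_X\colon\wt X\to X$, which acts freely, properly discontinuously, and by $\wt g$-isometries.

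First, I would construct $\chi_\cdv$. Fix a basepoint $p\in\wt X$ and let $D_p=\{x\in\wt X:\ d_{\wt g}(x,p)\le d_{\wt g}(x,\gamma p)\ \text{for all}\ \gamma\in\Gamma\}$ be its Dirichlet fundamental domain. Since $d_{\wt g}(x,p)=\min_{\gamma}d_{\wt g}(x,\gamma p)=d_X(\pi_X x,\pi_X p)\le\diam(X)$ for $x\in D_p$, we have $\overline{D_p}\subset\overline B(p,\diam(X))$. Choose $\rho\in C_c^\infty(\wt X,[0,1])$ with $\rho\equiv1$ on $\overline B(p,\diam(X))$ and $\supp\rho\subset\overline B\big(p,\diam(X)+\tfrac12\big)$, so that $\diam(\supp\rho)\le 2\diam(X)+1$. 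Because $\rho$ is compactly supported and $\Gamma$ is discrete, the series $\Sigma(x):=\sum_{\gamma\in\Gamma}\rho(\gamma x)$ is locally finite, hence defines a smooth $\Gamma$-periodic function, and $\Sigma\ge1$ everywhere since every $\Gamma$-orbit meets $\overline{D_p}\subset\{\rho=1\}$. I then set $\chi_\cdv:=\rho/\Sigma$: this lies in $C^\infty(\wt X,[0,1])$ (as $\Sigma>0$ and $0\le\rho\le\Sigma$), satisfies $\supp\chi_\cdv\subset\supp\rho$ with the required diameter bound, and obeys $\sum_{\gamma\in\Gamma}\chi_\cdv(\gamma x)=1$ for every $x\in\wt X$.

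Next I would deduce the two identities from this last relation. For continuous $f\colon X\to\C$, decompose $\wt X$ as the essentially disjoint union $\bigcup_{\gamma\in\Gamma}\gamma\overline{D_p}$, change variables in each piece by the isometry $\gamma$, and use the $\Gamma$-invariance of $\pi_X^*f$ and of the Riemannian volume; this gives $\int_{\wt X}\chi_\cdv\,\pi_X^*f\,d\vol=\int_{D_p}\big(\sum_{\gamma\in\Gamma}\chi_\cdv(\gamma x)\big)\,\pi_X^*f(x)\,d\vol(x)=\int_X f\,d\vol$, which is the first formula. For the second, I would run exactly the same unfolding on the restricted covering $\pi_X\colon\pi_X^{-1}(Y)\to Y$ — which is again a $\Gamma$-covering by isometries of submanifolds — using a Borel fundamental domain for $\Gamma$ acting on $\pi_X^{-1}(Y)$ (for instance $\pi_X^{-1}(Y)\cap D_p$ after a generic choice of $p$, so that $Y$ meets $\pi_X(\partial D_p)$ in a $\vol_Y$-null set) together with the same identity $\sum_\gamma\chi_\cdv(\gamma x)=1$.

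I expect this to be essentially soft, with no serious obstacle. The one point that genuinely needs attention is the quantitative bound $\diam(\supp\chi_\cdv)\le 2\diam(X)+1$: it is the reason for building $\chi_\cdv$ from a bump supported only slightly beyond a \emph{fundamental domain} (itself enclosed in a ball of radius $\diam(X)$) rather than from an arbitrary compactly supported $\rho$. A secondary technicality is ensuring, for each submanifold $Y$, that the chosen fundamental domain for $\Gamma\curvearrowright\pi_X^{-1}(Y)$ has $\vol_{\pi_X^{-1}(Y)}$-negligible boundary, which is harmless because the basepoint $p$ may be perturbed.
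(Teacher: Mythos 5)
Your construction is exactly the one the paper uses: take a bump function equal to $1$ on a fundamental domain and supported in a slightly larger set, and normalize by its sum over deck translates so that $\sum_{\gamma\in\Gamma}\chi_\cdv(\gamma x)=1$. The paper's proof stops at the construction, whereas you additionally spell out the Dirichlet-domain choice that yields the diameter bound and the unfolding argument for the two integral identities; these verifications are correct and consistent with what the paper leaves implicit.
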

 
\begin{rema}\label{positivity of chi CdV}
    We sketch the derivation of $\chi$ below without explicitly verifying the three claims above in order to emphasize the following property: for each $x \in X$, there exists at least one $\wt x \in \wt X$ such that $\pi_X(\wt x) = x$ and $\chi (\wt x) > 0$. In addition, for any $\beta \in \Gamma$, $\beta^* \chi$ has the same properties as $\chi$ does.
\end{rema}
\begin{proof}
    Let $\chi_1 \in C^\infty(\wt X, [0,1])$ be such that for every $x \in X$, there exists at least one $\wt{x} \in \wt{X}$ for which $\chi_1(\wt{x}) > 0$. This can be easily achieved by choosing a fundamental domain $D \subset \wt X$ and letting $\chi_1$ be a bump function which is compactly supported and identically equal to $1$ on $D$. Then set
    \begin{align}
        \chi(\wt{x}) = \frac{\chi_1(\wt{x})}{\sum_{\gamma \in \Gamma} \gamma^* \chi_1(\wt{x})}.
    \end{align}
\end{proof}

\subsection{The wave trace for deformations of an integrable metric}\label{sec: wave trace for deformations of an integral metric}
We now show how the parametrices $\wt E_N$ in Proposition~\ref{Cr approximation} descend to the quotient $X = \T^2$ given our dynamical hypotheses on $\Phi_{g_\eps}^t$.

\begin{theo}\label{parametrix}
    Assume that $(g_\eps)_{0 \leq \eps \leq \eps_0}$ is a smooth, one-parameter family of Riemannian metrics on $\T^2$ and suppose that $g_0$ has integrable geodesic flow. Then, for $\eps$ small and $x,y$ sufficiently close to the diagonal, the Schwartz kernel $E(t,x,y; \eps)$ of the wave propagator $\cos t \sqrt{-\Delta_\eps}$ is given microlocally near $g_0$-graph tori by a locally finite (in time) sum of Lagrangian distributions
    \begin{align*}
        E(t, x, y; \eps) = \sum_{\pm} \sum_{\gamma \in \Gamma} E_{\gamma, \pm}(t, x,y; \eps) \mod C^\infty(\R \times X \times X),
    \end{align*}
    each of which belongs to the class $I^{-1/4}(\R \times X \times X; \Lambda_{\gamma,\pm}^\eps)$, where
    \begin{align*}
        \Lambda_{\gamma, \pm}^\eps = & \left \{ (t, x, y, \tau, \xi, \eta): |t| = |\psi_\gamma^\eps(x,y)|,\,\, (\xi, \eta) = - \tau \text{sgn}(t) d_{x,y} \psi_\gamma^\eps, \,\, \mp \tau \text{sgn}(t) > 0 \right\}.
    \end{align*}
    Each Lagrangian distribution has an oscillatory integral representation of the form
    \begin{align*}
        E_{\gamma, \pm}(t,x,y;\eps) = e^{\mp i \pi (d-1)/ 4} \int_0^\infty e^{\pm i \theta ((\psi_{\gamma}^\eps)^2(x,y)  - t^2)} |t| a_{\gamma, \pm} (x,y, \theta; \eps) d\theta,
    \end{align*}
    where the amplitudes $a_{\gamma, \pm}$ are elliptic Kohn-Nirenberg symbols of order $\frac{d-1}{2}$ with positive principal part $a_{\gamma, 0, \pm}$ and $\psi_\gamma^\eps$ is the length of the unique geodesic connecting $x$ to $y$ which is close to a loop in the homology class $[\gamma]$ (cf. Corollary~\ref{cor:loop fn}).
\end{theo}

\begin{proof}
    Throughout the proof, we will fix $\eps$ and suppress the dependence of $E_{\gamma,\pm}(t,x,y;\eps)$, $\psi_\gamma^\eps(x,y)$, $a_{\gamma, \pm}(x,y; \eps)$ and $\Lambda_{\gamma, \pm}^\eps$ on $\eps$. Choose a conic, open set $\mathcal{O} \subset T^*X$ whose evolution under the $g_\eps$-geodesic flow for times $t \in I$ remains disjoint from $g_0$-nongraph tori and the $g_0$-separatrix region, where the foliation is nonsmooth. This can be done by choosing $\CO$ to correspond to a set of actions in $g_0$-action-angle coordinates and for small $\eps$, it is clear that within the region of phase space corresponding to $I \times \CO$, the Lagrangians $\Lambda_\pm$ project locally diffeomorphically onto $\R \times X \times X$. Now choose any zeroth order pseudodifferential operator $Q \in \Psi^0(X)$ which is microlocally elliptic on $\CO$ and has operator wavefront set in a slightly larger set whose $g_\eps$-geodesic flow intersects neither nongraph tori nor the separatrix for $t \in I$. When we say that two distributions $F$ and $G$ agree \textit{microlocally} near graph tori, we mean that
    \begin{align*}
        F Q - G Q \in C^\infty(\R \times X \times X).
    \end{align*}
    We will precompose $\cos t \sqrt{- \Delta}$ with $Q$ and show that it agrees with the sum
    $$
    \sum_\pm \sum_{\gamma \in \Gamma} E_{\gamma, \pm}(t,x,y),
    $$
    modulo a smooth remainder.

    \medskip
    
    Let $I \subset \R$ be a nonempty compact interval and denote by $C(I)$ the set of rotational homology classes for which the corresponding $\gamma$-loop functions take values in $I$, i.e.,
    \begin{align*}
        C(I) = \{[\gamma] \in H_1(X,\Z): \psi_\gamma(\wt X) \cap I \neq \emptyset, m_\gamma n_\gamma \neq 0\}.
    \end{align*}
    It is clear that for $\dt > 0$ small, all horizontal geodesics (with lifts projecting locally diffeomorphically onto their images in $X$) which connect points in a $\delta$-neighborhood of the diagonal and have length in $I$ are $\delta$-close to a $\gamma$-loop with $[\gamma] \in C(I)$. Define $\Gamma_{\CO, I}$ to be the collection of $\gamma \in \Gamma$ for which there exist geodesic $\gamma$-loops whose cotangent loops are contained completely in $\CO$ and have length in $I$. For $t \in I^\circ$, the microlocalized wave operator $\cos t \sqrt{- \Delta} \,Q$ is again a Fourier integral operator whose canonical relation is contained in $T^* (I^\circ \times X) \times \CO$, where the corresponding Lagrangians $\Lambda_{\gamma, \pm} \subset \Lambda_\pm$ project diffeomorphically onto the base; they are parametrized by the phase functions
    \begin{align*}
        \phi_{\gamma, \pm}(t,x,y,\theta) = \pm \theta(\psi_\gamma^2(x,y) - t^2).
    \end{align*}
    For each $\gamma \in \Gamma_{\CO, I}$, we Borel sum the amplitudes $W_{\gamma, k,\pm}$ from \eqref{eq:integratedw} to obtain
    \begin{align*}
         E_{\gamma, \pm}(t,x,y) \sim & |t| \sum_{k = 0}^\infty \int_0^\infty e^{\pm i \theta (\psi_{\gamma}^2(x,y)  - t^2)} W_{\gamma, k, \pm} (x,y) \theta_+^{\frac{d- 1}{2} - k} d\theta,
    \end{align*}
    which we claim are microlocal parametrices for $\cos t \sqrt{-\Delta}$.
    
    \medskip
    
    Indeed, the amplitudes $W_{\gamma, k, \pm}$ are obtained by integrating the transport equations \eqref{eq:transport} along geodesics which connect $x$ to $y$ within $\CO$. The transport equations \eqref{eq:transport} can be lifted to the Lagrangians $\Lambda_\pm$, where they admit global half-density solutions when tensored with an element of the Keller-Maslov line bundle \cite{GuilleminSternberg-SCA}. Whenever $\Lambda_\pm$ project locally diffeomorphically onto $\R \times X \times X$, a parametrization of $\Lambda_\pm$ using the phase functions $\pm \theta(\psi_\gamma^2 - t^2)$ and a trivialization of the Maslov bundle allow us to identify these half-density solutions with functions on $\R \times X \times X$. For $|t|$ small, the Borel sum of half-densities corresponding to $W_{\gamma, k, \pm}$ from \eqref{eq:transport}, when tensored with a suitable section of the Maslov bundle, coincides locally with the full symbol of the true solution operator on $\Lambda_\pm$ \cite{DuGu75}, \cite{DuHo72}. If $\gamma \in \Gamma_{\CO, I}$, then both $\psi_\gamma(x,y)$ and $W_{\gamma, k, \pm}(x,y)$ can be extended beyond the injectivity radius and the lifted half-densities associated to $W_{\gamma, k, \pm}$ continue to coincide with solutions of the transport equations in phase space. It follows that the operators defined by $E_{\gamma, \pm}(t,x,y)$ are microlocal parametrices for $\cos t \sqrt{- \Delta}$.

    \medskip
    
    Since the Maslov factors are given by $e^{\mp i \pi (d-1)/4}$ for $|t| \leq \inj(X,g)$, they remain constant along a tubular neighborhood of any orbit along which $\Lambda_\pm$ remains horizontal. Propagation of singularities tells us that microlocally near $I^\circ \times \CO \times \CO$, there are no orbits $\gamma \notin \Gamma_{\CO,I}$  which contribute to the singularities of $\cos t \sqrt{- \Delta} Q$, from which we see that the sum of oscillatory integrals
    \begin{align*}
        \sum_{\gamma \in \Gamma} \sum_\pm \sum_{k = 0}^\infty \int_0^\infty e^{\pm i \theta (\psi_{\gamma}^2(x,y)  - t^2)} |t|W_{\gamma, k, \pm} (x,y) \theta_+^{\frac{d- 1}{2} - k} d\theta,
    \end{align*}
    is locally finite in time. Ellipticity of the principal symbol follows directly from the formulas \eqref{eq:normalized Wk} and \eqref{eq:integratedw} for $W_{\gamma, 0, \pm}$.
\end{proof}

This is a geodesic flow analogue of the constructions in \cite{Vig21} and \cite{Vig22}, where an off-diagonal parametrix was constructed for the wave propagator microlocally near billiard loops with sufficiently small rotation number. Restricting to billiard loops of small rotation number excludes conjugate points, which is parallel to our consideration of only horizontal (graph) tori in the closed manifold case. The decomposition of $\Lambda_\pm$ into branches $\Lambda_{\gamma, \pm}$ resembles Chazarain's construction within a strictly convex planar domain \cite{Ch76}, where the sum over $\Gamma$ is replaced by a sum over bounce numbers (the number of times a billiard ball strikes the boundary). Guillemin and Melrose also used Chazarain's construction to derive a Poisson summation formula for more general Riemannian manifolds with strictly convex boundary \cite{GuMe79b}. A restriction of this parametrix to the diagonal of the boundary was used by Hezari and Zelditch to prove infinitesimal spectral rigidity of ellipses \cite{HeZe12}.

\begin{rema}\label{rem:microlocalCrapproximation}
    The finite regularity approximation in Proposition \ref{Cr approximation} carries over directly to the microlocalized parametrix construction in Theorem \ref{parametrix} above. Namely, if $Q \in \Psi^0(X)$ has operator wavefront set contained in the union of graph tori, then
    \begin{align*}
        \cos t \sqrt{- \Delta} \, Q - \sum_\pm \sum_{\gamma \in \Gamma} E_{\gamma, N, \pm} \,Q \in C_{\text{loc}}^{N - d - 2}(\R \times X \times X),
    \end{align*}
    where the operator $E_{\gamma, N, \pm}$ has Schwartz kernel
    \begin{align}\label{eq:EgammaNpm}
        E_{\gamma, N, \pm}(t,x,y) = \sum_{k = 0}^N \int_0^\infty e^{\pm i \theta (\psi_{\gamma}^2(x,y)  - t^2)} |t| W_{\gamma, k,  \pm} (x,y) \theta_+^{\frac{d-1}{2}-k} d\theta.
    \end{align}
    The amplitudes $W_{\gamma, k , \pm}$ in \eqref{eq:EgammaNpm} are constructed as in \eqref{eq:transport} by integrating along geodesic segments of an approximate $\gamma$-loop containing the points $x$ and $y$ (cf. Corollary \ref{cor:loop fn}). Similarly, we will denote by
    \begin{align}
        \wt E_{\gamma, N, \pm}(t,\wt x, \wt y) = \sum_{k = 0}^N \int_0^\infty e^{\pm i \theta (\psi_{\gamma}^2(\wt x, \wt y)  - t^2)} |t| \wt W_{\gamma, k,  \pm} (\wt x, \wt y) \theta_+^{\frac{d-1}{2}-k} d\theta
    \end{align}
    the kernel of a finite order parametrix on the universal cover $\R \times \wt X \times \wt X$, where $\wt W_{\gamma,k, \pm}$ are constructed exactly as before. Finally, we set
    \begin{align}\label{eq:Egammasumoverpm}
        E_{\gamma,N} = E_{\gamma,N,+} + E_{\gamma,N,-} \qquad \text{and} \qquad \wt E_{\gamma,N} = \wt E_{\gamma,N,+} + \wt E_{\gamma,N,-}.
    \end{align}
\end{rema}

A Marvizi-Melrose type formula for the wave trace follows immediately from Theorem~\ref{parametrix}.
\begin{theo}\label{MM style parametrix}
    Let $I \subset \R$ be a compact interval which is disjoint from the oscillatory length spectrum (cf. Definition \ref{def:NCC}). Then, the wave trace of $(\T^2, g)$ has an oscillatory integral representation near $I$ of the form
    \begin{align*}
        w(t) = \Re \left \{ e^{- i \pi \frac{d - 1}{4}} \sum_{\gamma \in \Gamma_I} \int_0^\infty \int_{\T^2} e^{i \theta (\psi_{\gamma}^2(x) - t^2)} b_\gamma(x, \theta) dx d\theta \right\} \mod C^\infty(\R),
    \end{align*}
    where $b_\gamma$ is an elliptic Kohn-Nirenberg symbol of order $\frac{d-1}{2}$ with positive principal part $b_{\gamma, 0} > 0$ and $\psi_\gamma(x)$ is the ``$\gamma$-loop function,'' which returns the length of the unique geodesic loop based at $x$ in the homology class $[\gamma]$. The index set $\Gamma_I$ is the collection of all homotopy classes which have periodic orbits of length belonging to the interval $I$.
\end{theo}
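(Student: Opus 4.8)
The plan is to derive the trace formula by restricting the off-diagonal parametrix of Theorem~\ref{parametrix} to the diagonal of $\T^2$ and integrating. First, observe that under the stated hypothesis every periodic orbit of length in $I$ has no pair of conjugate points, hence lies on a horizontal (graph) rational torus; moreover, since the minimal length of a closed curve in the homology class $(m,n)$ is bounded below by a positive multiple of $\|(m,n)\|$, only finitely many homology classes contribute, so we are precisely in the situation of Theorem~\ref{parametrix} with $C(\ell_0)$ replaced by the finite set $C(I)$. Applying that theorem with $N$ as large as desired (so that, by Proposition~\ref{Cr approximation}, the parametrix agrees with $\cos t\sqrt{-\Delta}$ modulo a kernel of any prescribed finite regularity $C^r$), and using that the diagonal is contained in the set $\mathcal{U}$ of the theorem while homology classes outside $C(I)$ contribute a smoothing operator near $I$, one gets, for $t$ in a neighborhood of $I$ and modulo a $C^r$ function of $t$,
\begin{equation*}
    w(t)=\int_{\T^2}E(t,x,x)\,dx=\sum_{\pm}\sum_{\gamma\in C(I)}\int_{\T^2}E_{\gamma,\pm}(t,x,x)\,dx,
\end{equation*}
where each $E_{\gamma,\pm}(t,x,x)=e^{\pm i\pi(d-1)/4}\int_0^\infty e^{\pm i\theta(\psi_\gamma^2(x)-t^2)}a_{\gamma,\pm}(t,x,x,\theta)\,d\theta$ and we have used $\psi_\gamma(x,x)=\psi_\gamma(x)$, the $\gamma$-loop function.

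Next I would collapse the two branches using reality. The Hadamard--Riesz amplitudes satisfy $W_{k,-}=\overline{W_{k,+}}$, since the functions $w_k$ produced by the transport recursion~\eqref{eq:integratedw} are real, the universal constant $C_0$ is real, and the phases $\pm\theta(\psi_\gamma^2-t^2)$ are complex conjugates; hence $E_{\gamma,-}(t,x,y)=\overline{E_{\gamma,+}(t,x,y)}$, consistently with the fact that $\cos t\sqrt{-\Delta}$ has a real Schwartz kernel. Therefore $\sum_{\pm}E_{\gamma,\pm}=2\Re E_{\gamma,+}$, and integrating over $\T^2$ yields, modulo $C^r$ near $I$,
\begin{equation*}
    w(t)=\Re\Bigl\{e^{i\pi(1-d)/4}\sum_{\gamma\in C(I)}\int_0^\infty\!\!\int_{\T^2}e^{i\theta(\psi_\gamma^2(x)-t^2)}\,b_\gamma(x,\theta)\,dx\,d\theta\Bigr\},
\end{equation*}
where $b_\gamma$ is, up to the fixed unimodular factor relating the two branches, twice the diagonal restriction of $a_{\gamma,\pm}$, to be normalized next.

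It remains to put $b_\gamma$ into the advertised form. Because the transport coefficients $W_k$ can be chosen $t$-independent (propagation of singularities), on the diagonal $a_{\gamma,\pm}(t,x,x,\theta)=|t|\,c_\gamma(x,\theta)$ with $c_\gamma(x,\theta)\sim\sum_{k\ge 0}c_{\gamma,k}(x)\theta^{(d-1)/2-k}$ a classical symbol independent of $t$. On the light cone $|t|=\psi_\gamma(x)$; writing $|t|=\psi_\gamma(x)+\dfrac{t^2-\psi_\gamma^2(x)}{|t|+\psi_\gamma(x)}$ and using $\bigl(t^2-\psi_\gamma^2(x)\bigr)e^{i\theta(\psi_\gamma^2-t^2)}=i\,\partial_\theta e^{i\theta(\psi_\gamma^2-t^2)}$, repeated integration by parts in $\theta$ (licit because $|t|+\psi_\gamma(x)$ is bounded below for $t$ near $I$) replaces the factor $|t|$ by $\psi_\gamma(x)$ at the expense of symbols of successively lower order; Borel-summing produces a genuine $t$-independent symbol $b_\gamma(x,\theta)\sim\sum_k b_{\gamma,k}(x)\theta^{(d-1)/2-k}$, of order $1/2$ when $d=2$, with $b_{\gamma,0}(x)=2\psi_\gamma(x)\,c_{\gamma,0}(x)$. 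Positivity of $b_{\gamma,0}$ is inherited from that of $a_{\gamma,0,\pm}$ (asserted in Theorem~\ref{parametrix}), since $\psi_\gamma(x)>0$; concretely, $c_{\gamma,0}(x)$ is a nonzero universal real constant times $w_0(x,x)=\Theta^{1/2}(x,x)=\sqrt{|\det g_{ij}(x)|}=1$ in geodesic normal coordinates at $x$, so $b_{\gamma,0}(x)$ equals a nonzero constant times $\psi_\gamma(x)$ and, by compactness of $\T^2$ and positivity of loop lengths, is bounded away from zero — hence $b_\gamma$ is elliptic.

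The main obstacle is exactly this last step: converting the off-diagonal FIO amplitude of Theorem~\ref{parametrix}, which carries an explicit $|t|$ factor, into a clean $t$-independent, elliptic Kohn--Nirenberg symbol with a provably positive leading coefficient, while keeping careful track of phases and Maslov factors so that no spurious cancellation is introduced (here one uses that, along orbits without conjugate points, the canonical relation stays horizontal and the Maslov contribution remains the trivial small-time one, as in Proposition~\ref{diffeo}). Everything else is bookkeeping, provided one has invoked Theorem~\ref{parametrix} — and, through it, the noncoincidence condition — to ensure that only the finitely many graph-torus classes in $C(I)$ enter and that the complementary part of the wave kernel is smooth near $I$.
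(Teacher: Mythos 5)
Your proposal is correct and follows essentially the same route the paper intends (the paper simply asserts that the trace formula "follows immediately from Theorem~\ref{parametrix}" by restricting the off-diagonal parametrix to the diagonal and integrating, which is exactly what you carry out, including the collapse of the two branches into a real part and the replacement of $|t|$ by $\psi_\gamma(x)$ via integration by parts in $\theta$). The only slip is in your "concretely" aside: the diagonal restriction of the leading amplitude for the $\gamma$-loop is $\Theta^{1/2}(\tilde x,\gamma\tilde x)$ (the volume density at the far endpoint of the loop on the universal cover), not $\Theta^{1/2}(\tilde x,\tilde x)=1$; this is still strictly positive precisely because the loop has no conjugate points, so your ellipticity and positivity conclusions stand unchanged.
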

A similar microlocal trace formula for isolated but potentially degenerate periodic orbits appears in \cite{PopovDegenerate}, where the integral over the base manifold $X = \T^2$ is replaced by one over a Poincar\'e section with a corresponding symplectic density. In that case, the generating function for the first return map may not coincide with the loop function (which does not necessarily exist) and may depend on additional fiber variables, precisely because of the possibility of conjugate points. In the absence of conjugate points, we found it easier to follow the standard WKB approach so that we could be explicit about the contributions of Maslov indices to the trace formula.
		
\subsection{Noncancellation}

We now move on to show noncancellation in the wave trace. It is actually more convenient to consider decay (or lack thereof) of the resolvent trace, whose absence is equivalent to the existence of singularities in the wave trace. Choose a Schwartz function $\wh \rho$ with compact support in $(0, \infty)$ and define
\begin{align*}
    \I(\lambda) = \int_0^\infty e^{i t \lambda} \wh \rho(t) w(t) dt.
\end{align*}
If we consider spectral parameters in the upper half plane, then $\I(\lambda + i \tau)$ coincides with the trace of the \textit{regularized resolvent}:
\begin{align}\label{resolvent}
    \int_0^\infty e^{i t (\lambda + i \tau)} \wh \rho(t) E(t,x,y) dt = - i \int_{- \infty}^\infty \rho(\lambda - \mu) (\mu + i \tau) R(\mu + i \tau) d \mu \in \mathcal{O} \left(\C_{\lambda + i \tau}; C^\infty( X \times X) \right).
\end{align}
Here, $R(\mu + i \tau,x,y)$ is the Schwartz kernel of $(- \Delta - (\mu + i \tau)^2)^{-1}$ with $\tau > 0$ and $\mathcal{O}\left(\C_{\lambda + i \tau}; C^\infty( X \times X) \right)$ is the space of $C^\infty(X,X)$-kernel valued holomorphic functions. The formula \eqref{resolvent} follows from the resolvent identity
\begin{align*}
    R(\mu+i \tau) = \frac{i}{\mu+i \tau} \int_0^\infty e^{i t \mu - t \tau} \cos t \sqrt{-\Delta} dt
\end{align*}
and the standard convolution formula for inverse Fourier transforms of products. Since $\wh \rho(t)$ is compactly supported, $\I(\lambda)$ extends holomorphically to the entire complex plane. At the level of operators, we have
\begin{align*}
    \int_0^\infty e^{i t \lambda} \wh \rho(t) \cos t \sqrt{-\Delta} dt = \pi \left( \rho(\lambda + \sqrt{-\Delta})  + \rho(\lambda - \sqrt{-\Delta}) \right),
\end{align*}
which is a smoothing operator when $\rho \in \mathscr{S}(\R)$. Note that $w(t)$ is $C^\infty$ on the interior of $\supp \wh \rho$ if and only if $\I(\lambda) = O(\lambda^{-\infty})$. In order to show that $w(t)$ is singular, i.e., that there are no cancellations, we will show that $\I(\lambda) \neq O(\lambda^{-\infty})$.

\begin{lemm}\label{lemma:int over diag}
    For $I$ as in Theorem~\ref{MM style parametrix} and $\wh{\rho}$ supported near $I$, equal to one there and containing no other lengths of closed geodesics in its support besides those in $I$, we have
    \begin{align}
        \int_0^\infty e^{i t \lambda} \wh{\rho}(t) w(t) dt = \sum_{\gamma \in \Gamma_I} \int_0^\infty\int_{\wt X} \wh \rho(t) e^{i t \lambda} \chi(\wt x) \wt E_{\gamma, N}(t,\wt x, \gamma\wt x) d\wt x dt + O(\lambda^{2 + d - N}), \quad N \geq d + 2,
    \end{align}
    where $\wt E_{\gamma,N}$ is the operator defined by \eqref{eq:Egammasumoverpm}, the index set $\Gamma_I$ consists of homotopy classes which contribute lengths to $I$, i.e., 
    $$
    \Gamma_I = \{\gamma \in \Gamma : \text{CritVal}(\psi_\gamma) \cap I \neq \empty\}
    $$
    and
    $\chi \in C_0^\infty(\wt X)$ is the cutoff from Lemma \ref{CdVlemma}.
\end{lemm}

\begin{proof}
    If $\wt X$ were compact, the lemma would follow immediately from Proposition~\ref{Cr approximation} and its microlocal extension for larger times (cf. Remark \ref{rem:microlocalCrapproximation}). Instead, we use the cutoff $\chi$ from Lemma~\ref{CdVlemma}. We have
    \begin{align*}
        \int_X E(t,x,x) dx =& \sum_{\gamma \in \Gamma} \int_{\wt X} \chi(\wt x) \wt E (t, \wt x, \gamma \wt x) d \wt x\\
        =& \sum_{\gamma \in \Gamma} \int_{\wt X}\Bigg(  \chi(\wt x) \big(\wt E(t, \wt x, \gamma \wt x) - \wt E_{\gamma,N}(t, \wt x, \gamma \wt x)\big)
         + \chi(\wt x) \wt E_{\gamma,N}(t, \wt x, \gamma \wt x)  \Bigg) d \wt x.
    \end{align*}
	For $t \in I$, a bounded interval, it follows from wavefront considerations that all but finitely many terms in the sum over $\Gamma$ are smooth, so we may replace the sum over $\Gamma$ by one over the finite index set $\Gamma_I$, which is defined in the statement of the lemma. By the same reasoning as in Remark \ref{rem:microlocalCrapproximation}, we see that the term
	\begin{align*}
		\sum_{\gamma \in \Gamma_I} \int_{\wt X} \chi(\wt x) \big(\wt E(t, \wt x, \gamma \wt x) - \wt E_{\gamma,N}(t, \wt x, \gamma \wt x)\big) d \wt x
	\end{align*}
	is locally in $C^{N - d - 2}(\R)$ near $I$ and after multiplication by $\wh \rho(t)$, its Fourier transform has decay rate $O(\lambda^{2 + d - N})$.
\end{proof}

\begin{rema}\label{resolvent 2}
    As before, restricting $E(t,x,y)$ to the diagonal $\{x = y\}$ implicitly makes the assumption that its wavefront set does not meet the conormal bundle of the spatial diagonal in $\R \times X \times X$. While this is indeed true for $t \neq 0$, it is more convenient to interpret the above formula by taking the trace \textit{after} evaluating the $t \to \lambda$ Fourier transform, in which case the regularized resolvent is smoothing and its trace is easily defined.
\end{rema}
	
\subsection{Proof of Theorem~\ref{thm:laplacetolength}} \label{subsec:laplacetolength} We are now ready to use the parametrix construction above to prove length isospectrality in Theorem~\ref{thm:laplacetolength}. We begin by applying the method of stationary phase to reduce the number of phase variables from $d+ 2$ to $d$.

\begin{lemm}\label{Regularized resolvent}
    Let $\rho \in \mathscr{S}(\R)$ be such that the support of $\wh \rho$ does not intersect the oscillatory length spectrum. Then the regularized resolvent trace has the form
        \begin{align*}
        \int_0^\infty e^{i t \lambda} \wh \rho(t) w(t) dt = \left(\sum_{\gamma \in \Gamma} \int_{\wt X} e^{i \lambda \psi_{\gamma}(\wt x)} A_\gamma(\wt x) d \wt x \right) \lambda^{\frac{d-1}{2}}  + O(\lambda^{\frac{d-3}{2}}),
    \end{align*}
    where $A_\gamma \in C_0^\infty(\wt X)$ is given by
    \begin{align*}
        A_\gamma(\wt x) =
        \begin{cases}
            \pi \frac{\chi(\wt x) \wh \rho(\psi_\gamma(\wt x))}{\left(2 \psi_\gamma (\wt x)\right)^{\frac{d- 1}{2} }} \wt W_{\gamma, 0,+}(\wt x, \gamma \wt x) & \gamma \in \Gamma_\rho,\\
            0 & \gamma \notin \Gamma_\rho,
        \end{cases}
    \end{align*}
    and $\psi_\gamma(\wt x) = \psi(\wt x, \gamma \wt x)$ is a lift of the $\gamma$-loop function from $X$ to $\wt X$. The index set $\Gamma_\rho$ in the definition of $A_\gamma$ is the set of all (rotational) homotopy classes in $\Gamma$ for which $\text{CritVal} (\psi_\gamma) \cap \supp \wh \rho \neq \emptyset$.
\end{lemm}

\begin{proof}
    Let $\Gamma_\rho$ be as in the statement of the lemma and apply Lemma \ref{lemma:int over diag}:
    \begin{align*}
        \int_0^\infty e^{i t \lambda} \wh{\rho}(t) w(t) dt = & \sum_{\gamma \in \Gamma_\rho} \int_0^\infty\int_{\wt X} \wh \rho(t) e^{i t \lambda} \chi(\wt x) \wt E_{\gamma,N}(t,\wt x, \gamma \wt x) d\wt x dt + O(\lambda^{2 + d - N})\\
        = &
        \sum_\pm \sum_{\gamma \in \Gamma_\rho} \sum_{k = 0}^N \int_{\wt X} \int_0^\infty \int_0^\infty e^{it \lambda \pm i \theta(\psi^2(\wt x, \gamma \wt x) - t^2)} \wh \rho(t) \chi(\wt x) |t| \wt W_{\gamma, k, \pm}(\wt x, \gamma \wt x) \theta_+^{\frac{d-1}{2} - k} d\theta dt d \wt x
        \\
        & + O(\lambda^{2 + d - N}).
    \end{align*}
    Thus, modulo $O(\lambda^{2 + d - N})$, we have reduced our regularized resolvent trace to a finite sum of oscillatory integrals, each of which can be estimated separately. Choose $N \geq \max\left\{d+ 2, \frac{d + 7}{2}\right\}$ so that the remainder above can be absorbed into the $O(\lambda^{\frac{d-3}{2}})$ term in the lemma. Using the change of variable $\theta = \lambda \eta$, we rewrite the above integral as
    \begin{align}
    \label{eq:new phase who this}
        \sum_\pm \sum_{\gamma \in \Gamma_\rho} \sum_{k = 0}^N \int_{\wt X} \int_0^\infty \int_0^\infty e^{i \lambda \Phi_{\gamma, \pm}(t, \eta, \wt x)} \wh \rho(t) |t| \chi(\wt x) \wt W_{\gamma, k, \pm}(\wt x, \gamma \wt x) \lambda^{\frac{d+1}{2} - k} \eta_+^{\frac{d-1}{2} - k} d \eta dt d \wt x,
    \end{align}
    where the new phase function is defined by
    $$
    \Phi_{\gamma, \pm}(t, \eta, \wt x) = t \pm \eta(\psi^2(\wt x, \gamma \wt x) - t^2).
    $$
    We now apply the method of stationary phase with respect to the variables $(t, \eta)$ in \eqref{eq:new phase who this}; the four stationary points of the phases $\Phi_{\gamma, \pm}$ with respect to $(t, \eta)$ are
    \begin{align*}
        d_{t, \eta} \Phi_{\gamma,\pm} = 0
        \iff
        \begin{cases}
            |t| = |\psi(\wt x,\gamma \wt x)|,\\
            \eta = \pm \frac{1}{2 \psi(\wt x, \gamma \wt x)}.
        \end{cases}
    \end{align*}
    The point $\eta = - \frac{1}{2 \psi(\wt x, \gamma \wt x)}$ doesn't belong to the domain of the $\eta$-integral and similarly, since $\wh \rho$ localizes near positive values of $t$, we can also ignore the stationary point where $t = - \psi(\wt x, \gamma \wt x)$. We therefore consider only the stationary points $t, \eta > 0$ of $\Phi_{\gamma,+}$, at which the Hessian is given by
    \begin{align}\label{Hessian}
        D_{t, \eta}^2 \Phi_{\gamma,+} \left(\psi (\wt x, \gamma \wt x), \frac{1}{2 \psi(\wt x, \gamma \wt x)} \right) =
        \begin{pmatrix}
            - \frac{1}{\psi (\wt x, \gamma \wt x)} & - 2 \psi(\wt x, \gamma \wt x)\\
            - 2 \psi(\wt x, \gamma \wt x)& 0
        \end{pmatrix}.
    \end{align}
    Since we are working away from $t = 0$, $\psi$ is nonvanishing and we can replace the integrand by its stationary phase approximation. From~\eqref{Hessian}, one sees that since the $2 \times 2$ Hessian has negative determinant, its signature must be $\mu = 0$ and the usual phase factor $e^{i \pi \mu/4}$ becomes $1$. Retaining only the stationary terms ($0 \leq k \leq N$, $+$) in \eqref{eq:new phase who this}, we have
    \begin{align*}
        \pi \int_{\wt X} e^{i  \lambda \psi(\wt x,\gamma  \wt x)} \wh \rho(\psi(\wt x, \gamma \wt x) )\chi(\wt x) \left(\frac{\lambda}{2 \psi( \wt x, \gamma \wt x)}\right)^{\frac{d- 1}{2} - k} \wt W_{\gamma, k,+}(\wt x, \gamma \wt x)  d \wt x + O\left(\lambda^{\frac{d-1}{2} - k - 1}\right).
    \end{align*}
    The main term comes from $k = 0$, which contributes $\lambda^{\frac{d-1}{2}}$. The formula \eqref{eq:new phase who this} then becomes
    \begin{align*}
        \pi \int_{\wt X} e^{i  \lambda \psi(\wt x,\gamma  \wt x)} \chi(\wt x) \wh \rho(\psi(\wt x, \gamma \wt x)) \left(\frac{\lambda}{2 \psi( \wt x, \gamma \wt x)}\right)^{\frac{d- 1}{2} } \wt W_{\gamma, 0,+}(\wt x, \gamma \wt x)  d \wt x + O\left(\lambda^{\frac{d-3}{2}}\right)
    \end{align*}
    and the lemma follows from summing over $\Gamma_\rho$.
\end{proof}

For the remainder of this section, we will set $(X,g) = (\T^2, g_\eps)$, where $g_\eps$ is a one-parameter family of Riemannian metrics and $g_0$ has integrable geodesic flow, as in Section \ref{sec:dynamics}. In our proof of Theorem \ref{thm:laplacetolength}, it will be convenient to introduce the following notions.

\begin{def1}\label{def:clusters}
    Let $[\gamma] = (m,n) \in \Z^2$ correspond to graph rational tori, i.e., $mn \neq 0$ in~\eqref{osc Lsp}, and consider the $(m,n)$-loop functions $\psi_\gamma^\eps$ (whose existence is guaranteed by Corollary~\ref{cor:loop fn}) for the metric $g_\eps$. Now set
    \begin{align*}
        t_{\gamma, \eps} = \min_{x \in \T^2} \psi_\gamma^\eps(x), \qquad \text{and} \qquad T_{\gamma, \eps} = \max_{x \in \T^2} \psi_\gamma^\eps(x). 
    \end{align*}
    We call $\text{CritVal}(\psi_\gamma^\eps) \cap [t_{\gamma, \eps}, T_{\gamma, \eps}]$ a $\textbf{$\gamma$-cluster}$ in the length spectrum and $[t_{\gamma,\eps}, T_{\gamma,\eps}]$ a \textbf{$\gamma$-cluster interval}.
\end{def1}

We will also need the following theorem of Soga.

\begin{theo}[\cite{Soga}]\label{SogasTheorem}
    Denote by
    \begin{align*}
        \J(\lambda) = \int_{\R^2} e^{i \lambda \phi(x)} a(x) dx,
    \end{align*}
    where $a(x) \ge 0$ on $\mathbb{R}^2$ and $\phi$ is a smooth function of $x$. Assume that there exists at least one degenerate stationary point $x_0$ of $\phi$ satisfying $a(x_0) > 0$. Then $\J(\lambda)$ does not decrease rapidly:
\begin{align*}
    (1 + |\lambda|)^m \J(\lambda) \notin L^2(\mathbb{R}^1)
\end{align*}
for an $m < \frac{1}{2}$.
\end{theo}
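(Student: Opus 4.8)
\medskip

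\noindent\emph{Proof proposal.}
The plan is to transfer the two–dimensional oscillatory integral into a one–dimensional Fourier transform and then exploit the fact that, since $w\ge 0$, there can be no cancellation. Assuming, as in the applications, that $\phi$ is smooth and $w$ is a compactly supported (continuous) amplitude, set $c:=\phi(x^0)$ and let $\nu:=\phi_*(w\,dx)$ be the pushforward under $\phi$ of the finite nonnegative measure $w\,dx$ on $\R^2$. Then $\nu$ is a compactly supported finite positive measure and
\begin{equation*}
    I(\lambda)=\int_{\R}e^{i\lambda s}\,d\nu(s),
\end{equation*}
so $I$ is (a reflection of) $\wh\nu$. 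It therefore suffices to show that $I\notin L^1(\R)$, for then the claimed conclusion holds already with $m=0<\tfrac12$ (and, in fact, with every $m\ge 0$).

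Suppose, for contradiction, that $I\in L^1(\R)$. By Fourier inversion $\nu$ then has a bounded continuous density, and consequently $\nu\big((c-\delta,\,c+\delta)\big)\le C\delta$ for all small $\delta>0$. On the other hand, fix $r>0$ so small that $w\ge\tfrac12 w(x^0)>0$ on $B:=B(x^0,r)$; then
\begin{equation*}
    \nu\big((c-\delta,\,c+\delta)\big)=\int_{\{|\phi-c|<\delta\}}w\,dx\ \ge\ \tfrac12\,w(x^0)\,\leb\!\left(B\cap\{\,|\phi-c|<\delta\,\}\right).
\end{equation*}
Everything thus reduces to proving that the sublevel band has \emph{super}–linear area, say $\leb\big(B\cap\{|\phi-c|<\delta\}\big)\ge C'\delta^{\beta}$ for some $\beta<1$ (with an even larger lower bound in the rank–$0$ or infinitely degenerate cases). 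This is exactly where the degeneracy of $x^0$ is used: choosing a null eigenvector $v$ of $D^2\phi(x^0)$ as a coordinate axis, the restriction of $\phi-c$ to that axis vanishes to some order $k\ge 3$ (possibly $k=\infty$); letting the flat coordinate range over an interval of length $\sim\delta^{1/k}$, the transverse slice is governed by the surviving, at worst quadratic, behaviour and has width $\sim\sqrt\delta$, which yields area $\gtrsim\delta^{\,1/k+1/2}$ with $\tfrac1k+\tfrac12<1$. Comparing with the upper bound gives $C\delta\ge C''\delta^{\beta}$ for all small $\delta$, which is impossible. Hence $I\notin L^1(\R)$.

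The only genuinely delicate step is this geometric lower bound on $\leb\big(B\cap\{|\phi-c|<\delta\}\big)$. The heuristic above is immediate for polynomial normal forms, but for a general smooth $\phi$ one has to control how the flat direction curves and to rule out spurious near–cancellations between sheets of the sublevel set; this is the standard sublevel–volume estimate at a degenerate critical point, which is most cleanly established via resolution of singularities (or by an elementary argument in the relevant finite–multiplicity cases). Everything else — the passage to $\nu$, Fourier inversion, and the bookkeeping with the weight $(1+|\lambda|)^{m}$ — is routine. In the intended application to Theorem~\ref{thm:laplacetolength}, one takes $\phi$ to be a localization of a loop function $\psi_\gamma$ arising in the stationary–phase reduction preceding this theorem and $w$ its strictly positive principal amplitude, so that a degenerate critical point of $\psi_\gamma$ forces the regularized resolvent trace $\tr R_\rho(\lambda)$ not to decay rapidly, contradicting Laplace isospectrality.
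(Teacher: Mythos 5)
The paper itself gives no proof of this statement; it is imported verbatim from \cite{Soga}, so your argument has to stand on its own. Your overall strategy --- push $w\,dx$ forward under $\phi$ to a finite positive measure $\nu$, note $I(\lambda)=\widehat{\nu}(-\lambda)$, and play the Fourier-inversion upper bound $\nu((c-\delta,c+\delta))\le C\delta$ against a superlinear lower bound coming from the degenerate sublevel set --- is sound and does prove $I\notin L^1(\R)$, hence the literal conclusion with $m=0$. One correction to the step you flag as delicate: it is elementary and needs neither resolution of singularities nor the order $k$ of vanishing along the flat direction. In fact your box of dimensions $\delta^{1/2}\times\delta^{1/k}$ is generally \emph{not} contained in $\{|\phi-c|<\delta\}$ when $k$ is large, because of cubic cross terms such as $st^{2}$ (try $\phi=s^{2}+st^{2}$, where $k=\infty$ but $|st^{2}|\sim\delta^{1/2}$ on your box). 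The correct universal choice is $|s|\le\eps\delta^{1/2}$, $|t|\le\eps\delta^{1/3}$ with $t$ along a unit kernel vector $v$ of $D^{2}\phi(x^{0})$: symmetry of the Hessian kills both the $t^{2}$ and the $st$ coefficients, so Taylor's theorem gives $|\phi-c|\le(\tfrac{|a|}{2}\eps^{2}+C\eps^{3})\delta<\delta$ there, whence $\leb\big(B\cap\{|\phi-c|<\delta\}\big)\ge c\,\delta^{5/6}$ and the contradiction $C\delta\ge c\,\delta^{5/6}$.

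The more substantive issue is that the statement you prove ($m=0$, in $L^1$) is strictly weaker than what the paper actually needs. In the proof of Theorem~\ref{thm:laplacetolength} the theorem is invoked to contradict $I_0(\lambda)=O(\lambda^{-1})$; but a function that is $O(\lambda^{-1})$ already fails to lie in $L^1(\R)$, so ``$I\notin L^1$'' produces no contradiction there. The real content of Soga's theorem is that a degenerate stationary point forces decay \emph{strictly slower} than the nondegenerate rate $\lambda^{-1}$, which in the $L^1$ formulation means the conclusion must hold for some \emph{negative} $m$ (equivalently, the $L^2$ formulation with $m<1/2$ that the paper actually uses). Fortunately your method delivers this with one more standard estimate: testing $\nu$ against a rescaled nonnegative bump $\psi((\cdot-c)/\delta)$ with $\widehat{\psi}$ compactly supported shows that $(1+|\xi|)^{m}\widehat{\nu}\in L^1$ with $-1<m<0$ implies $\nu((c-\delta,c+\delta))\le C\delta^{1+m}$; comparison with the lower bound $c\,\delta^{5/6}$ forces $m\le-\tfrac16$, so $(1+|\lambda|)^{m}I\notin L^1$ for every $m>-\tfrac16$. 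You should record this strengthened range rather than stopping at $m=0$, since only then does the theorem do the job it is asked to do in Section~\ref{subsec:laplacetolength}.
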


We now proceed with the proof of Theorem~\ref{thm:laplacetolength} in the case that $\eps$ is small enough that the noncoincidence condition~\eqref{eq:INCC} persists.

\begin{proof}[Proof of Theorem~\ref{thm:laplacetolength}]
    We set $X = \T^2$ and $g = g_\eps$ as in the statement of Theorem \ref{thm:laplacetolength}. Assume to the contrary that the rotational length spectrum is not constant within some compact interval $I$ on which the noncoincidence condition \eqref{eq:INCC} holds. {By compactness and closedness of $\CL^{\rot}(\T^2, g_\eps), \CL^{\osc}(\T^2,g_\eps)$ (see Proposition \ref{prop:LSP close}), it follows that for each $0 \leq \eps \leq \eps_0$, there is a positive gap between the rotational and oscillatory length spectra.
    
    \medskip 
    
    Without loss of generality, let us shrink the interval $I$ so that when $\eps = 0$, $\CL(\T^2, g_0) \cap I = \{\ell_0\}$, where $\ell_0 \in \CL^{\rot}(\T^2,g_0)$. In other words, we assume that $I$ contains exactly one rotational length for the integrable metric $g_0$ and does not contain any oscillatory lengths. This is possible by the noncoincidence condition when $\eps = 0$ and the fact that $\CL^{\rot}(X, g_0)$ is discrete; it is indexed by $\gamma \in \Gamma$ and unlike $\CL^{\osc}(X,g_0)$, cannot have accumulation points. Note that there might exist several rational tori in distinct homology classes whose corresponding periodic orbits have length $\ell_0$.
    
    \medskip
    
    \noindent \textbf{Claim 1:} There exists $\eps_2 > 0$ such that for $0 \leq \eps \leq \eps_2$, $I \cap \CL^{\osc}(\T^2, g_\eps) = \emptyset$.
    
    \medskip
    
    To see why this is true, assume to the contrary that there is a sequence $\eps_j \to 0$ such that for each $\eps_j$, there exists a periodic oscillatory $g_{\eps_j}$-geodesic $\alpha_j$ whose length belongs to $I$. Since the metrics $(g_\eps)_{0 \leq \eps \leq \eps_0}$ are uniformly comparable to $g_0$, there are only finitely many homology classes (rotational or oscillatory) which contain closed geodesics with lengths belonging to $I$ for any $0 \leq \eps \leq \eps_0$. Thus, among the periodic oscillatory $g_{\eps_j}$-geodesics with lengths in $I$, we can select a subsequence $\alpha_{j_k}$ which belongs to a fixed oscillatory homology class. By compactness, we can choose a further subsequence $\alpha_{j_{k_\ell}}$ which converges to a periodic oscillatory $g_0$-geodesic whose length belongs to $I$, contradicting our assumption on $I$.
    
    \medskip
    
    \noindent \textbf{Claim 2.} Denote by $\Gamma(\ell_0)$ the collection of all homotopy classes for which corresponding periodic $g_0$-geodesics have length $\ell_0$. There exists $\eps_3 > 0$ such that for $0 \leq \eps \leq \eps_3$,
    \begin{enumerate}
        \item if $\gamma \in \Gamma(\ell_0)$, then $\min \psi_{\gamma}^\eps(X), \max \psi_\gamma^\eps(X) \in I$.

        \item if $\gamma \notin \Gamma(\ell_0)$, then $\psi_\gamma^\eps(X) \cap I = \emptyset$.
    \end{enumerate}
}
    \medskip
    
    {Claim $2$ follows directly from the fact that the loop functions $\psi_\gamma$ in Corollary~\ref{cor:loop fn} depend continuously on $\eps$. In other words, Claims $1$ and $2$ establish that as $\eps \geq 0$ increases, any new lengths in $I$ are necessarily rotational and must arise as bifurcations of $\ell_0 \in \CL(\T^2, g_0)$.}

    \medskip
    
    {Set $\eps_1 = \min(\eps_2,\eps_3)$. For $0 \leq \eps \leq \eps_1$, denote by
    \begin{align*}
        t_\eps(\ell_0) = & \min_{\gamma \in \Gamma(\ell_0)} \min_{x \in \T^2} \psi_\gamma^\eps (x)
        \\
        T_\eps(\ell_0) = & \max_{\gamma \in \Gamma(\ell_0)} \max_{x \in \T^2} \psi_\gamma^\eps(x)
    \end{align*}
    the endpoints of a union of such cluster intervals in the length spectrum. We will show that both $t_\eps(\ell_0)$ and $T_\eps(\ell_0)$ belong to the singular support of the wave trace for $\eps$ small. Together with Laplace isospectrality, this will demonstrate that $t_\eps(\ell_0) = T_\eps(\ell_0) = \ell_0$. For brevity, we will write $t_\eps$ and $T_\eps$ moving forward, without specifying $\ell_0$.} Since smoothness of the wave trace is equivalent to rapid decay of its Fourier transform, it suffices to show that
    \begin{align*}
        \I(\lambda) := \int_0^\infty e^{i \lambda t} \wh \rho(t) w(t) dt
    \end{align*}
    is \textit{not} rapidly decreasing in $\lambda$ for some $\rho \in \mathscr{S}(\R)$ such that $\wh \rho \geq 0$ is compactly supported in $I$ and identically equal to one on a neighborhood of either $t_\eps$ or $T_\eps$. We use the formula for $\I(\lambda)$ in Lemma~\ref{Regularized resolvent}:
    \begin{align*}
        \I(\lambda) = \left(\sum_{\gamma \in \Gamma} \int_{\wt X} e^{i \lambda \psi_{\gamma}(\wt x)} A_\gamma(\wt x) d \wt x \right) \lambda^{\frac{d-1}{2}}  + O(\lambda^{\frac{d-3}{2}}).
    \end{align*}
    For $0 \leq \eps < \eps_1$, we have
    \begin{align}
        \Gamma_\rho(\ell_0):= \{\gamma: \psi_\gamma^{\eps}(\wt X) \cap \supp \wh \rho \neq \emptyset \} \subset \Gamma(\ell_0),
    \end{align}
    i.e., all geodesic loops for the metric $g_\eps$ which have length in the support of $\wh \rho$ belong to a homotopy class in $\Gamma(\ell_0)$. In particular, the collection of all such $\gamma$ is finite. Denote the elements of $\Gamma_\rho(\ell_0)$ by $\gamma_1, \gamma_2, \cdots, \gamma_q$. Now choose a fundamental domain $D \subset \wt X$ together with $\beta_i \in \Gamma$, $1 \leq i \leq q$ such that $\supp \beta_i^*\chi \cap \supp \beta_j^* \chi = \emptyset$ when $i \neq j$. The purpose of introducing the elements $\beta_i$ is to separate out the supports of the test functions $\beta_i^* \chi$ on $\wt X$, with the goal of replacing our sum of integrals from Lemma \ref{Regularized resolvent} by a single integral, the integrand of which will contain an amplitude which is multiplied by a \textit{single} oscillatory factor $e^{i \lambda \Psi}$; here $\Psi$ will be patched together out of the loop functions $\psi_\gamma^\eps$ with $\gamma \in \Gamma_\rho(\ell_0)$, each of which will be defined on the disjoint union of fundamental domains $\beta_i D \subset \wt X$. Since each $\beta_i$ is an isometry, we have
    \begin{align*}
        \I(\lambda) = \lambda^{\frac{d-1}{2}} \int_{\wt X} \sum_{j = 1}^q e^{i \lambda \psi_{\gamma_j}(\wt x)} \beta_j^* A_{\gamma_j}(\wt x) d \wt x + O\left(\lambda^{\frac{d-3}{2}}\right).
    \end{align*}
    Note that since each $A_{\gamma_j}$ contains a factor of $\chi(\wt x)$, all summands in the integral above have disjoint support in $\wt X$. By periodicity, we also have that $\psi(\beta_i \wt x, \gamma_i \beta_i \wt x) = \psi ( \wt x, \gamma_i \wt x)$ for every $1 \leq i \leq q$. Let $\phi_i \in C_c^\infty(\wt X, [0,1])$ be smooth cutoffs which are identically equal to one on $\supp \beta_i^* \chi$ and vanish on $\supp \phi_j$ whenever $i \neq j$. Then, we can write
    \begin{align}
    \label{eq:PsiZ integral}
        \I(\lambda) = \lambda^{\frac{d-1}{2}} e^{- i \pi \frac{d-1}{4}} \int_{\wt X} e^{i \lambda \Psi (\wt x)} Z(\wt x)\, d \wt x  + O\left(\lambda^{\frac{d-3}{2}}\right),
    \end{align}
    where
    \begin{align*}
        \Psi(\wt x) = \sum_{j = 1}^q \phi_j(\wt x) \psi(\wt x, \gamma_j \wt x) \qquad \text{and} \qquad
        Z(\wt x) = e^{i\pi \frac{d-1}{4}} \sum_{i = 1}^q \beta_i^* A_{\gamma_i}(\wt x).
    \end{align*}
    The function $\Psi$ is a \textit{single} phase function which coincides with $\psi_{\gamma_i}(\wt x)$ on the support of each $\beta_i^*A_{\gamma_i}$, $1 \leq i \leq q$ and $Z(\wt x) \geq 0$. Each critical point of $\Psi$ in the support of $Z$ corresponds to a periodic orbit. Denote the integral in \eqref{eq:PsiZ integral} by $\I_0(\lambda)$.

    \medskip
    
    {Recall that we are assuming, for the sake of contradiction, that the metrics $g_\eps$ are not length isospectral on the interval $I$. There are several cases to consider.}
    
    \medskip
    
    \noindent \textbf{Case 1.} Suppose that $t_\eps = T_\eps \neq \ell_0$ and choose a test function $\wh \rho$ which is identically equal to one near $t_\eps = T_\eps$ and vanishes on an open neighborhood of the rest of the length spectrum. This is possible as a consequence of Claims $1$ and $2$ above. Collapse of the $\gamma$-length spectra implies constancy of the $\gamma$-loop functions $\psi_\gamma^\eps$, or equivalently, the existence of rational tori for each $\gamma \in \Gamma(\ell_0)$, all of whose orbits have length $t_\eps = T_\eps$. Since the phase function $\Psi$ of \eqref{eq:PsiZ integral} is \textit{constant} on the support of $Z$, {the oscillatory factor can be removed from the integral:
    \begin{align*}
        \I(\lambda) = \lambda^{\frac{d-1}{2}} e^{i t_\eps \lambda } e^{- i \pi \frac{d-1}{4}} \int_{\wt X} Z(\wt x) d \wt x + O(\lambda^{\frac{d-3}{2}}).
    \end{align*}
    Since $Z \geq 0$ and has positive integral, we conclude that $\I(\lambda)$ is not rapidly decreasing.}
    
    \medskip
    
    \noindent \textbf{Case 2.} Now consider the case $t_\eps < T_\eps$. Without loss of generality, assume that $t_\eps \notin \CL(X,g_0)$, or equivalently, that $t_\eps \neq \ell_0$; otherwise, we may instead consider $T_\eps \notin \CL(\T^2,g_0)$. We choose a test function $\rho \in \mathscr{S}(\R)$ such that $\wh{\rho}$ is nonnegative, compactly supported in $I$, equal to one on a neighborhood of $t_\eps$, and vanishes near $T_\eps$. We will again derive a contradiction by showing that the integral $\I(\lambda)$ is not rapidly decreasing in $\lambda$. {Note that the set $\Psi^{-1}(t_\eps) \cap \supp Z$ consists entirely of points which belong to minimal periodic geodesics with homotopy classes in $\Gamma_\rho(\ell_0)$. Hence, $\Psi$ is in fact locally constant along the projection of such an orbit from $S^* \T^2$ to $\T^2$. In particular, $D^2 \Psi$ is degenerate at all such critical points. It is clear that the amplitude $Z$ of $\I_0(\lambda)$ in \eqref{eq:PsiZ integral} is nonnegative everywhere and positive on a nonempty set of points in $\Psi^{-1}(t_\eps)$ (cf. Remark \ref{positivity of chi CdV}).} Hence, Soga's Theorem (Theorem \ref{SogasTheorem}) implies that $|\I_0(\lambda)| (1 + |\lambda|)^{\nu} \notin L^2(\R_+)$ for some $\nu < 1/2$. We claim that \eqref{eq:PsiZ integral} cannot be $O(\lambda^{-\infty})$. If it were, then the integral $\I_0(\lambda)$ would be $O(\lambda^{-1})$, in which case $\I_0(\lambda) (1 + \lambda)^\nu = O(\lambda^{\nu - 1})$. Since $2(\nu - 1) < -1$, $\I_0(\lambda) (1+ \lambda)^{\nu} \in L^2(\R_+)$, contradicting the conclusion of Soga's Theorem. Hence, $\I(\lambda)$ is not rapidly decreasing. (Note that the condition $d = 2$ is needed in order to apply Theorem \ref{SogasTheorem}).

    \medskip

    The preceding argument shows that $w(t)$ is singular at $t_\eps$. By symmetry, the same proof establishes singularity of $w(t)$ at $T_\eps$. From isospectrality and the fact that within $I$, $w(t)$ is singular only at $\ell_0$ when $\eps = 0$, we see that $t_\eps$ must coincide with $\ell_0$, which is a contradiction. We conclude that $g_\eps$ must be rotationally length isospectral on any interval for which the noncoincidence condition holds.
    
\end{proof}
    
Note that in the proof above, we used the fact that for $\eps = 0$, the lengths of rotational periodic orbits at which the wave trace is singular are isolated. In general this may not hold, even in the completely integrable setting, for oscillatory lengths (cf. Definition \ref{def:osc Lsp}). Consider the following example, suggested to us by J. de Simoi:
\begin{exa}
    Take a smooth function $f$ on $\R/\Z$ which has infinitely many critical values accumulating at some finite number $L$. Revolving its graph around the circle, we obtain a metric of revolution on the torus, corresponding to a completely integrable Hamiltonian system. The periodic orbits at which the argument of $f$ is a critical point have lengths which accumulate at $L$, despite the discrete indexing of orbits by homology class. In this case, either (1) there are infinitely many rational tori whose orbits belong to the homology class $(1,0)$, with lengths approaching $L$ or (2) these orbits may in fact belong to the separatrix, a measure zero region in $T^* \T^2$ where the foliation by invariant tori is not smooth. In either case, these orbits contribute only to the oscillatory length spectrum $\CL^{\osc} (\T^2,g)$ in \eqref{osc Lsp} and are excluded by our noncoincidence assumption (Definition \ref{def:NCC}). If there are infinitely many such $(1,0)$-rational tori which belong to a smooth part of the foliation, then these (nongraph) tori project vertically onto $\T^2$ at some point, corresponding to conjugate points. Such conjugate points could produce nontrivial Maslov indices in the trace formula and jeopardize our noncancellation argument. By contrast, Theorem~\ref{prop:geom RT} guarantees that for \textit{graph}-rational tori, there is exactly one torus and all of its foliating orbits are minimal.
\end{exa}

\section{A second variation approach: Proof of Theorem~\ref{thm:RT rig}} \label{sec:secondorder}

In this section, we present the proof of Theorem~\ref{thm:RT rig}, which is the second step in our overall scheme for the proof of Theorem~\ref{thm:main2}.

\medskip

The basic idea is to compute the second variation of the \emph{energy functional} and show that, thanks to the dependence on $\epsilon$ being only linear, the unperturbed geodesic $\gamma^{(0)}$ is a critical point of both the unperturbed energy functional $E^{(0)}$ and the first-order expansion $E^{(1)}$; see \eqref{eq:E explicit} below. Combining the two associated Euler-Lagrange equations then allows us to conclude that $U \equiv 0$.
We refer to the paragraph at the end of this section for a brief review of relevant notions in the calculus of variations used in this proof.
 
\medskip

To start the proof, let us suppose, as in the statement of Theorem~\ref{thm:RT rig}, that a deformation $(g_\epsilon)$ whose line element takes the form
\begin{equation}\label{eq:conformal deform in pf}
    \dif s^2_\epsilon = ( \Lambda(x_1, x_2) + \epsilon U(x_1, x_2)) (\dif x_1^2 + \dif x_2^2)
\end{equation}
preserves a rational torus and moreover the lengths of closed geodesics in that torus are constant in $\epsilon$.
Suppose the preserved rational torus has homology class $(m_1, m_2)$ and the length of its closed geodesics is $$T > 0.$$
We consider the lift of $g_\epsilon$ to the universal cover $\R^2$ and still denote it by $g_\epsilon$.
The \textit{energy} $E_\epsilon(\gamma)$ of any $C^1$ curve $\gamma: [0,T] \to \R^2$ with respect to the metric $g_\epsilon$ is 
\begin{equation}\label{eq:energy}
    E_\epsilon (\gamma)  = \int_0^T [\Lambda(\gamma(t)) + \epsilon U(\gamma(t))] |\dot\gamma(t)|^2 \dif t.
\end{equation}
Here and for the rest of this section, we use $|\cdot|$ to denote the standard Euclidean norm and we use $\dot\gamma(t)$ and $\ddot\gamma(t)$ to denote the first and second derivatives of $\gamma(t)$ with respect to $t$.
The study of geodesics as critical points of the energy functional is classical.
We refer to \cite{milnor1963morse} for more details.

\medskip

Fix $a, b \in \R^2$ such that $b = a + (m_1, m_2)$.
By Corollary~\ref{cor:loop fn} and a lift of geodesics from $\T^2$ to $\R^2$, for each $\epsilon$, the plane $\R^2$ is foliated by a family of unit-speed geodesics of $g_\epsilon$ such that each geodesic $\gamma$ in this foliating family satisfies $\gamma(0) + (m_1, m_2) = \gamma(T)$ and corresponds to a closed geodesic in the $(m_1, m_2)$-rational torus after projection.
Therefore, for each $\epsilon$ we pick the unique unit-speed geodesic $\gamma_\epsilon:[0,T] \to \R^2$ of $(\R^2, g_\epsilon)$ in this foliating family that joins $a$ and $b$.
Note that Corollary~\ref{cor:loop fn} implies $\gamma_\epsilon$ depends smoothly on $\epsilon$.
Since $\gamma_\epsilon$ has unit-speed parametrization, a standard calculation (see, for example,~\cite[\S 12]{milnor1963morse}) shows that $E_\epsilon(\gamma_\epsilon) = T$ and thus the energy of $\gamma_\epsilon$ is constant in $\epsilon$.

\medskip

Let us now write the $\epsilon$-expansion of $E_\epsilon$ and $\gamma_\epsilon$ as follows 
\begin{equation*}    \gamma_\epsilon = \gamma^{(0)} + \epsilon \gamma^{(1)} + \epsilon^2 \gamma^{(2)} + \CO(\epsilon^3), \qquad E_\epsilon = E^{(0)} + \epsilon E^{(1)}.
\end{equation*}
Explicitly, we have 
\begin{equation}\label{eq:E explicit}
    E^{(0)}(\gamma) = \int_0^T \Lambda(\gamma(t)) |\gamma'(t)|^2 \dif t \quad \text{and} \quad  
    E^{(1)}(\gamma) = \int_0^T U(\gamma(t)) |\gamma'(t)|^2 \dif t.
\end{equation}
We now compute the $\epsilon$-expansion of $E_\epsilon(\gamma_\epsilon)$:
\begin{equation}\label{eq:E expansion}
    \begin{aligned}
    E_\epsilon(\gamma_\epsilon) =& E^{(0)}(\gamma^{(0)}) + \epsilon \Big(E^{(1)}(\gamma^{(0)}) + \dif_{\gamma^{(0)}}E^{(0)}(\gamma^{(1)}) \Big) \\
    &+ \epsilon^2 \Big(\dif_{\gamma^{(0)}}E^{(1)}(\gamma^{(1)}) + \frac{1}{2}\dif^2_{\gamma^{(0)}}E^{(0)}(\gamma^{(1)},\gamma^{(1)}) + \dif_{\gamma^{(0)}}E^{(0)}(\gamma^{(2)} )\Big) + \CO(\epsilon^3).
    \end{aligned}
\end{equation}
On the other hand, since $\gamma_\epsilon$ is by construction a geodesic with respect to $g_\epsilon$, Theorem~\ref{thm:variation of geod} implies that $\dif_{\gamma_\epsilon}E_\epsilon(\delta) = 0$ for any variation vector field $\delta$ along $\gamma_\epsilon$ fixing endpoints. Expanding in $\epsilon$ yields
\begin{equation}\label{eq:dE expansion}
    0 = \dif_{\gamma_\epsilon}E_\epsilon(\delta) = \dif_{\gamma^{(0)}}E^{(0)}(\delta) + \epsilon \Big( \dif^2_{\gamma^{(0)}} E^{(0)}(\delta, \gamma^{(1)}) + \dif_{\gamma^{(0)}}E^{(1)}(\delta) \Big) + \CO(\epsilon^2).
\end{equation}
Since the endpoints \(a,b\) are independent of \(\epsilon\), we have
\[
\gamma^{(j)}(0)=\gamma^{(j)}(T)=0,\qquad j\ge1.
\]
Using \eqref{eq:dE expansion} with $\delta = \gamma^{(2)}$ and $\delta = \gamma^{(1)}$, we can simplify \eqref{eq:E expansion} to
\begin{equation}\label{eq:E reduced}
    E_\epsilon(\gamma_\epsilon) = E^{(0)}(\gamma^{(0)}) + \epsilon E^{(1)}(\gamma^{(0)})  - \frac{1}{2} \epsilon^2 \dif^2_{\gamma^{(0)}}E^{(0)}(\gamma^{(1)},\gamma^{(1)}) + \CO(\epsilon^3).
\end{equation}
We arrive at the second-order condition 
\begin{equation}\label{eq:d2E condition}
    \dif^2_{\gamma^{(0)}}E^{(0)}(\gamma^{(1)},\gamma^{(1)}) = 0.   
\end{equation}
By Proposition~\ref{prop:geom RT} part (1), the geodesic $\gamma^{(0)}$ minimizes the $g_0$-distance between its endpoints.
Therefore, since $\gamma^{(0)}(t)$ has unit-speed, it also minimizes the energy functional $E^{(0)}$ (with fixed endpoints).
It follows that the Hessian $\dif^2_{\gamma^{(0)}}E^{(0)}$ is positive semi-definite (see, for example, Theorem~\ref{thm:variation of geod}). 
\footnote{We remark that the proof of Proposition~\ref{prop:geom RT} (3) actually shows that any geodesic lying in a rational torus has no conjugate points over a period. This is consistent with the Morse index theorem, which states that the dimension of the maximal subspace on which $\dif^2_\gamma E$ is negative definite coincides with the number of conjugate points along $\gamma$, counted with multiplicity.} 
In particular, for any variation $\delta$ fixing endpoints and any $c \in \R$, we have 
\begin{equation*}
    0 \leq \dif^2_{\gamma^{(0)}}E^{(0)}(\gamma^{(1)} + c\delta, \gamma^{(1)} + c\delta) = \dif^2_{\gamma^{(0)}}E^{(0)}(\gamma^{(1)}, \gamma^{(1)}) + 2 c  \dif^2_{\gamma^{(0)}}E^{(0)}(\gamma^{(1)},\delta) + c^2  \dif^2_{\gamma^{(0)}}E^{(0)}(\delta, \delta).
\end{equation*}
Combining with \eqref{eq:d2E condition}, the above inequality for arbitrary $c \in \R$ forces $\dif^2_{\gamma^{(0)}}E^{(0)}(\gamma^{(1)},\delta) = 0$ for arbitrary variation $\delta$ fixing endpoints.
Substituting into \eqref{eq:dE expansion}, we deduce that 
\begin{equation*}    
    \dif_{\gamma^{(0)}}E^{(1)} = 0,
\end{equation*} 
 i.e., the undeformed geodesic $\gamma^{(0)}$ is also a critical point of the perturbing term $E^{(1)}$, in addition to being a critical point of the original energy functional $E^{(0)}$.
In these two cases, the Euler-Lagrange equations satisfied by $\gamma^{(0)}$ write respectively
\begin{equation*}
    \frac{1}{2} |\dot \gamma^{(0)}(t)|^2\nabla_{\gamma^{(0)}(t)} U 
    = \big(\nabla_{\gamma^{(0)}(t)}U \cdot  \dot \gamma^{(0)}(t)\big) \dot \gamma^{(0)}(t)   + U(\gamma^{(0)}(t) ) \ddot \gamma^{(0)}(t)
\end{equation*}
and 
\begin{equation*}
    \frac{1}{2} |\dot \gamma^{(0)}(t)|^2\nabla_{\gamma^{(0)}(t)} \Lambda 
    = \big(\nabla_{\gamma^{(0)}(t)}\Lambda \cdot  \dot \gamma^{(0)}(t)\big) \dot \gamma^{(0)}(t)   + \Lambda(\gamma^{(0)}(t) ) \ddot \gamma^{(0)}(t).
\end{equation*}
Taking the Euclidean inner product with $\dot \gamma^{(0)}(t)$ and rearranging, we obtain from the first equation
\begin{equation*}
    0 = \frac{1}{2} |\dot \gamma^{(0)}(t)|^2 \frac{\dif}{\dif t} U(\gamma^{(0)}(t) ) + U(\gamma^{(0)}(t) ) \ddot \gamma^{(0)}(t) \cdot \dot \gamma^{(0)}(t).
\end{equation*}
For $t \in \R$ such that $U(\gamma^{(0)}(t)) \neq 0$, the above equation gives
\begin{equation*}
    -\frac{1}{2} \frac{\dif }{\dif t} \log |U(\gamma^{(0)}(t))| = \frac{\ddot \gamma^{(0)}(t) \cdot \dot \gamma^{(0)}(t)}{|\dot \gamma^{(0)}(t)|^2}.
\end{equation*}
Since the same equation with $U$ replaced by $\Lambda$ also holds, we have $\frac{\dif }{\dif t} \log |U(\gamma^{(0)}(t))| = \frac{\dif }{\dif t} \log |\Lambda(\gamma^{(0)}(t))|$.
It follows that $U(\gamma^{(0)}(t))$ is a constant multiple of $\Lambda(\gamma^{(0)}(t))$ over each connected component of $\{t \mid U(\gamma^{(0)}(t)) \neq 0\}$.
Actually, since $\Lambda > 0$, by continuity of $\Lambda$ and $U$ there exists a constant $c $ such that $U(\gamma^{(0)}(t)) = c \Lambda(\gamma^{(0)}(t))$ for all $t$.
Now, using the first-order condition of \eqref{eq:E reduced}, we have $E^{(1)}(\gamma^{(0)}) = 0$.
Since $\gamma^{(0)}$ has unit-speed with respect to $g_0$, we obtain 
\begin{equation}\label{eq:c=0}
    0 = E^{(1)}(\gamma^{(0)}) = \int_0^T U(\gamma^{(0)}(t)) |\dot \gamma^{(0)}(t)|^2 \dif t = c\int_0^T \Lambda(\gamma^{(0)}(t)) |\dot \gamma^{(0)}(t)|^2 \dif t
        = c T.
\end{equation}
Since $T > 0$, the above equation forces $c = 0$ and therefore $U$ vanishes on the image of $\gamma^{(0)}$.
Finally, since the family of closed geodesics $\gamma^{(0)}$ of a rational torus foliates $\T^2$, we can apply the same argument to every such geodesic $\gamma^{(0)}$ to deduce that $U = 0$ everywhere. 
The proof of Theorem~\ref{thm:RT rig} is finished.

\begin{rema}
    We emphasize that the preceding argument requires the deformation to preserve not only a rational torus itself, but also the \emph{length} of the closed geodesics in the rational torus (guaranteed by the isospectral assumption).
    The condition for preserving the length is necessary: one can of course deform the metric nontrivially within the class of Liouville metrics, however at the cost of nontrivially changing the length spectrum.
\end{rema}

\subsection*{A brief review of the calculus of variations}\label{sec:var calc}

We recall some basic notions in the calculus of variations applied to geodesics. Our exposition loosely follows Milnor's book~\cite{milnor1963morse}.

\medskip

Let $(X,g)$ be a Riemannian manifold.
Let $T>0$ and let $\omega: [0,T] \to X$ be a piecewise smooth curve, i.e., there exist $n \geq 1$ and a subdivision
\[
0 = t_0 < t_1 < \cdots < t_{n-1} < t_n = T
\]
such that $\omega|_{[t_j, t_{j+1}]}$ is smooth for $j = 0,\ldots, n-1$.
We define the \emph{energy} of $\omega$ associated with $g$ by
\begin{equation*}    
    E(\omega) := \int_0^T |\dot\omega(t)|_g^2 \,\dif t.
\end{equation*}
A \emph{variation vector field} along $\omega$ is a continuous and piecewise smooth section
\[
\delta: [0,T] \to \omega^*TX
\]
of the pull-back bundle $\omega^*TX$.
Explicitly, $\delta(t) \in T_{\omega(t)}X$ for each $t \in [0,T]$, and the restriction $\delta|_{[t_j,t_{j+1}]}$ is smooth for every $j$.
We say that $\delta$ \emph{fixes the endpoints} if $\delta(0)=0$ and $\delta(T)=0$.

\medskip

Given such a variation vector field $\delta$, we define the \emph{first variation} of the energy functional at $\omega$ in the direction $\delta$ by
\begin{equation}\label{eq:first-variation}
    \dif_\omega E(\delta)
    :=
    \frac{\dif}{\dif\eta} E\bigl(\alpha(\,\cdot\,,\eta)\bigr)\Big|_{\eta=0},
\end{equation}
where $\alpha : [0,T] \times (-\eta_0,\eta_0) \to X$ is any map such that
\[
    \alpha(t,0)=\omega(t),
    \qquad
    \frac{\partial \alpha}{\partial \eta}(t,0)=\delta(t)
    \quad \text{for all } t \in [0,T].
\]
One checks that the value of \eqref{eq:first-variation} does not depend on the choice of $\alpha$.

\medskip

Suppose \(\omega\) is a critical point of \(E\), i.e., $\dif_\omega E \equiv 0$. Then, given two variation vector fields $\delta_1,\delta_2$ along $\omega$, we define the corresponding \emph{second variation} of $E$ by
\begin{equation}\label{eq:second-variation}
    \dif_\omega^2 E(\delta_1,\delta_2)
    :=
    \frac{\partial^2}{\partial \eta_1\,\partial \eta_2}
    E\bigl(\alpha(\,\cdot\,,\eta_1,\eta_2)\bigr)\Big|_{\eta_1=\eta_2=0},
\end{equation}
where $\alpha : [0,T] \times (-\eta_0,\eta_0)^2 \to X$ is any map satisfying
\[
\alpha(t,0,0)=\omega(t), \qquad
\frac{\partial \alpha}{\partial \eta_1}(t,0,0)=\delta_1(t), \qquad
\frac{\partial \alpha}{\partial \eta_2}(t,0,0)=\delta_2(t).
\]
One checks using the criticality of $\omega$ that~\eqref{eq:second-variation} is independent of the choice of the two-parameter variation $\alpha$.
These notions give the usual characterization of geodesics; see, for example,~\cite[\S\S 12--13]{milnor1963morse}.

\begin{theo}\label{thm:variation of geod}
    The curve $\omega$ is a geodesic if and only if $\dif_\omega E(\delta)=0$ for every variation vector field $\delta$ along $\omega$ fixing the endpoints.
    In addition, if $\omega$ is a length-minimizing geodesic segment, then the quadratic form
    \[
        \delta \mapsto \dif_\omega^2 E(\delta,\delta)
    \]
    is positive semidefinite on the space of variation vector fields fixing the endpoints.
\end{theo}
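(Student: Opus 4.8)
The plan is to carry out the classical first- and second-variation computations for the energy functional, in the spirit of \cite[\S\S 12--13]{milnor1963morse}. Throughout, $\nabla$ denotes the Levi-Civita connection of the ambient metric and $R$ its curvature tensor (with the convention $R(X,Y)Z = \nabla_X\nabla_Y Z - \nabla_Y\nabla_X Z - \nabla_{[X,Y]}Z$), and all variations are taken with fixed endpoints, so their variation fields vanish at $t = 0$ and $t = 1$. For the first assertion I would differentiate $E(\alpha(\,\cdot\,,\varepsilon)) = \int_0^1 \langle \partial_t\alpha, \partial_t\alpha\rangle\,\dif t$ under the integral sign; using that $\nabla$ is torsion-free, so that $\nabla_{\partial_\varepsilon}\partial_t\alpha = \nabla_{\partial_t}\partial_\varepsilon\alpha$, and then integrating by parts in $t$, the endpoint terms drop out and one is left with
\[
  \dif_\omega E(\delta) \;=\; -2\int_0^1 \big\langle \delta,\; \nabla_{\dot\omega}\dot\omega \big\rangle\,\dif t .
\]
By the fundamental lemma of the calculus of variations (test against $\delta = \phi\,\nabla_{\dot\omega}\dot\omega$ for nonnegative bump functions $\phi$), this vanishes for every admissible $\delta$ precisely when $\nabla_{\dot\omega}\dot\omega \equiv 0$, i.e.\ when $\omega$ is a geodesic. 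A minor point is that $\omega$ is a priori only $C^1$, so strictly one should first deduce from the integrated Euler--Lagrange identity that a critical point of $E$ is smooth; in all our applications $\omega$ is already a smooth geodesic, so this is immaterial.

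For the second assertion, assume $\omega$ is a geodesic, write $T = \partial_t\alpha$ and $V_i = \partial_{\varepsilon_i}\alpha$ for a two-parameter variation as in \eqref{eq:second-variation}, and differentiate $E$ twice. Using once more $\nabla_{V_i}T = \nabla_T V_i$, the curvature identity $\nabla_{V_2}\nabla_T V_1 - \nabla_T\nabla_{V_2}V_1 = R(V_2,T)V_1$, and one further integration by parts in $t$, one reaches the Synge second-variation formula. The terms carrying the second-order data $\nabla_{V_2}V_1$ appear only paired with $\nabla_{\dot\omega}\dot\omega = 0$ and hence drop, and the boundary contributions vanish because $V_i \equiv 0$ along $\{t = 0\}$ and $\{t = 1\}$; what remains is
\[
  \dif_\omega^2 E(\delta_1,\delta_2) \;=\; 2\int_0^1 \Big( \langle \nabla_{\dot\omega}\delta_1, \nabla_{\dot\omega}\delta_2\rangle \;-\; \langle R(\delta_1,\dot\omega)\dot\omega, \delta_2\rangle \Big)\,\dif t ,
\]
which depends only on $\omega, \delta_1, \delta_2$ and is visibly a symmetric bilinear pairing; in particular it is well defined, independently of the auxiliary map $\alpha$.

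It then remains to feed in the minimality hypothesis. Suppose $\omega$ is a minimal geodesic, fix an admissible variation field $\delta$, and pick any one-parameter variation $\beta(\,\cdot\,,s)$ with fixed endpoints, $\beta(\,\cdot\,,0) = \omega$ and $\partial_s\beta|_{s=0} = \delta$. By Cauchy--Schwarz and minimality of $\omega$,
\[
  E\big(\beta(\,\cdot\,,s)\big) \;\ge\; \length\big(\beta(\,\cdot\,,s)\big)^2 \;\ge\; \length(\omega)^2 \;=\; E(\omega),
\]
the last equality using that $\omega$ is parametrized with constant speed (cf.\ \eqref{eq:E=Lsq}). Hence $s \mapsto E(\beta(\,\cdot\,,s))$ attains a minimum at $s = 0$, so its second derivative there is $\ge 0$; and taking $\alpha(t,\varepsilon_1,\varepsilon_2) := \beta(t,\varepsilon_1 + \varepsilon_2)$ in \eqref{eq:second-variation} shows this second derivative equals $\dif_\omega^2 E(\delta,\delta)$. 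Therefore $\dif_\omega^2 E(\delta,\delta) \ge 0$ for every admissible $\delta$, which is by definition the positive semi-definiteness of the symmetric form $\dif_\omega^2 E$ (its off-diagonal entries being recovered by polarization). I expect no serious obstacle: the content is essentially bookkeeping --- tracking covariant derivatives, endpoint terms and the integrations by parts --- and the only points that need care are the $C^1$-regularity remark in the first step and routing the minimality argument through the length functional (where it is available) rather than directly through $E$.
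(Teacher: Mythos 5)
Your proof is correct and is essentially the argument the paper relies on: the paper does not prove this statement itself but cites \cite[\S\S 12--13]{milnor1963morse}, and your derivation (first variation plus the fundamental lemma, Synge's second variation formula at a geodesic, and the Cauchy--Schwarz comparison $\length^2 \le E$ with equality at constant speed to transfer length-minimality into energy-minimality) is precisely Milnor's. The two points you flag --- the $C^1$-regularity of critical points and the well-definedness of $\dif_\omega^2 E$ only at geodesics --- are handled the same way in the cited source, so there is nothing to add.
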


\section{Integrable deformations of Liouville metrics: Proof of Theorem \ref{thm:RI implies liouv}} \label{sec:RIliouv}
 We start with proving part (i) of Theorem \ref{thm:RI implies liouv}. By length isospectrality and application of the well known Maupertuis principle (discussed in Section~\ref{subsubsec:Maupertuis}) we have that the length functional of perturbed geodesics of any fixed homology class $(m_1, m_2) \in \Z^2$ with $m_1 m_2 \neq 0$ is constant in $\epsilon$, i.e.
 \begin{equation} \label{eq:lengthorderbyorder}
\epsilon \mapsto \ell^{(m_1, m_2)}(\epsilon) := \int_0^{1}   \left( 
V_\epsilon(\gamma_\epsilon(t))\right)^{1/2} |\dot{\gamma_\epsilon}(t)| \, \dif t  = \const 
\end{equation}
Here and in the following, we parametrize all geodesics by a fixed interval $[0,1]$, after rescaling the unit-speed geodesic produced by Maupertuis, so that the relevant quantity is the length functional in fixed-endpoint form.

\medskip

In \eqref{eq:lengthorderbyorder}, $\gamma_\epsilon$ is the perturbed geodesic of homology class $(m_1, m_2) \in \Z^2$ with $m_1 m_2 \neq 0$. By the asumed analyticity of $f_1, f_2, U$ and the persistence of the rational torus, the corresponding family of geodesics depends, by the implicit function theorem, analytically on the deformation parameter $\eps$ and we can hence write
\begin{equation}
    \gamma_\epsilon = \sum_{j \ge 0} \epsilon^j \gamma^{(j)} 
\end{equation}
where each $\gamma^{(j)}$ is a smooth curve in $\T^2$. Moreover, we abbreviated 
\begin{equation*}
    V_\epsilon(x_1, x_2) = 1 + f_1(x_1) + f_2(x_2) + \sum_{j \ge 1} \epsilon^j U^{(j)}(x_1, x_2) \,. 
\end{equation*}

To prove Theorem \ref{thm:RI implies liouv}, we analyze \eqref{eq:lengthorderbyorder} inductively to show that each $U^{(n)}$ is separable as in \eqref{eq:separablemain}. The key to this is showing that, at any order $n$,  we have
\begin{equation}
    \int_{0}^{1} U^{(n)}(\gamma_0(t)) \dif t = \const \,, 
\end{equation}
 independent of the initial position of the unperturbed geodesic $\gamma_0$. 

\medskip

At first order, the base case of our induction, since $\gamma_0$ is a critical point of the length functional corresponding to the \emph{unperturbed} metric, we have the first order relation
\begin{equation} \label{eq:firstvariation}
    \int_0^1 U^{(1)}(\gamma_0(t)) \dif t  = 0 \,, 
\end{equation}
i.e., in particular, the integral of the first order coefficient of the perturbation along a geodesic in an (unperturbed) rational torus vanishes is constant, irrespective of the geodesic.

\medskip

Next, we rewrite \eqref{eq:firstvariation} for a geodesic $\gamma_0$ in the homology class $(m_1,m_2) \in \Z^2$ with $m_1 m_2\neq 0$ and initial condition $x_0 = (x_{0,1},x_{0,2}) \in \T^2$. In order to do so, we again employ the Maupertuis principle and parametrize $\gamma_0$ by $t \in [0,1]$ using \emph{action-angle coordinates} (see \eqref{eq:AA} below) for the associated (integrable) natural mechanical Hamiltonian $\widetilde{H}$. 

\medskip

Then \eqref{eq:firstvariation} takes the form\footnote{With a slight abuse of notation, we will use $x_i$ as both a variable and a function.} 
\begin{equation} \label{eq:firstvariationcanon}
    \int_0^1 U^{(1)}\big(x_1(\theta_{0,1} + m_1t), x_2(\theta_{0,2} + m_2t) \big) \dif t = \const \,, 
\end{equation}
where $\theta_{0,i} \in \T$ is such that $x_i(\theta_{0,i}) = x_{0,i}$ for $i=1,2$, and the functions $x_i \in C^1(\T)$ are defined as follows: First, note that there exists a unique decomposition $e_1 + e_2 = 1$ with $e_i \in (0,1)$ such that 
\begin{equation} \label{eq:length}
    %\ell(\gamma_0) = 
    |m_1| \int_{0}^{1} \frac{1}{\sqrt{e_1 + f_1(x_1)}} 		\mathrm{d} x_1 = |m_2| \int_{0}^{1} \frac{1}{\sqrt{e_2+ f_2(x_2)}} 		\mathrm{d} x_2 \,. 
\end{equation}
Then, for $i=1,2$, $x_i \in C^1(\T)$ is defined as the inverse function of
\begin{equation} \label{eq:AA}
    x_i \mapsto \theta_i(x_i) = \mathrm{sgn}(m_i) \frac{\int_0^{x_i} \dif x_i'\big(e_i + f_i(x_i')\big)^{-1/2} }{\int_0^{1} \dif x_i'\big(e_i + f_i(x_i')\big)^{-1/2}} \,. 
\end{equation}
We point out that the $\theta_i$'s are the \emph{angle coordinates} corresponding to the \emph{action coordinates} $I_i = \mathrm{sgn}(m_i) \int_0^1 \sqrt{2(e_i + f_i(x_i))} \dif x_i $ near the rational torus, in which $\gamma_0$ lies. 

\medskip

To show that $U^{(1)}$ is separable (as in \eqref{eq:separablemain}), the rest of the argument closely follows \cite[Proof of Theorem III in Section 4.3]{henheik2025deformational}: This identifies the vanishing of the first-variation integral along all rotational orbits as in \eqref{eq:firstvariation} with the vanishing of the off-diagonal Fourier modes of $U^{(1)}$, and hence with separability. 

\medskip

In fact, our first variation formula \eqref{eq:firstvariationcanon} agrees with \cite[Eq.~(4.15)]{henheik2025deformational} for $\mu_i = 1$. More precisely, in \cite{henheik2025deformational} the functions $f_i$ were replaced as $f_i \to \mu_i f_i$ for some $\mu_i >0$. Then, adopting this rescaling to our setting, part (c) in step (iii) of the proof of Theorem III in \cite{henheik2025deformational} shows the following: Given \eqref{eq:firstvariationcanon} for all initial positions $x_0 \in \T^2$ and homology classes $(m_1, m_2) \in \Z^2$ with $m_1 m_2 \neq 0$, we can conclude that $U^{(1)}$ is separable as in \eqref{eq:separablemain} -- given that $(\mu_1, \mu_2)$ lies in a \emph{generic set}. In fact, the set on which this conclusion might be \emph{invalid}, is the zero set of a non-constant analytic function of $(\mu_1, \mu_2)$. We hence deduce that, for generic analytic $f_1, f_2$, a rationally integrable deformation of a Liouville metric by a trigonometric polynomial preserving lengths of invariant tori as in \eqref{eq:deformLioutrig} is necessarily separable. This concludes the proof of Theorem \ref{thm:RI implies liouv} in the case that $f_1, f_2 \not\equiv \mathrm{const}$.

\medskip

If $f_1 \equiv 0$, say (a non-zero constant can be absorbed into a redefined $f_2$), we follow \cite[Proof of Theorem II in Section 4.2]{henheik2025deformational} instead of \cite[Proof of Theorem III in Section 4.3]{henheik2025deformational}. The reason why one can allow a general dependence of $U$ on $x_1$ is that the functions $x_1 \equiv \vartheta_1$ agree, and hence  $\left\{\vartheta_{0,1} \mapsto \mathrm{e}^{\mathrm{i} 2 \pi k_1 \vartheta_{0,1}} \right\}_{k_1 \in \Z}$ forms an orthonormal basis of $L^2(\T)$, cf.~\cite[Step (iii) of the proof of Theorem II in Section 4.2]{henheik2025deformational}. 

\medskip

We thus conclude that
\begin{equation*}
    U^{(1)}(x_1, x_2) = U^{(1)}_1(x_1) + U^{(1)}_2(x_2) \,, 
\end{equation*}
i.e.~separability of $U^{(1)}$.

\medskip

We now perform the induction step. Take as the induction hypothesis that $U^{(1)}, ... , U^{(n-1)}$ are separable. We now want to prove that 
\begin{equation}     \label{eq:cancel}
   \int_0^1 U^{(n)}(\gamma_0(t)) \dif t = \const
\end{equation}
independent of the initial position of the unperturbed geodesic for any $n \ge 2$, from which, following the argument above for $U^{(1)}$, we immediately deduce separability of $U^{(n)}$.

\medskip

By differentiating the geodesic equation order by order, the $n$-th correction $\gamma^{(n)}$ enters the $n$-th variation only through terms that vanish because the unperturbed curve is critical; therefore the $n$-th coefficient depends on $U^{(n)}$
  plus terms determined entirely by lower-order data.
Hence, expanding \eqref{eq:lengthorderbyorder} in $\eps$ to order $n$, we deduce the (vanishing) $n$-th order coefficient to be given by
\begin{equation} \label{eq:highergen}
    \int_0^1 \dif t \, U^{(n)}(\gamma_0(t)) + (\text{terms involving } U^{(j)} \text{ and } \gamma^{(j)} \text{ for } j \le n-1)= 0. 
\end{equation}
Since the lower-order terms are built from separable potentials (by the induction hypothesis) and their associated averages, they are independent of the initial phase. Hence, we deduce that the first term in \eqref{eq:highergen} is a constant independent of the initial condition of the unperturbed geodesic $\gamma_0$. The same argument as in the base case thus applies verbatim and we obtain separability of $U^{(n)}$. This concludes the induction step and thus the proof of Theorem \ref{thm:RI implies liouv}~(i).

\medskip

For part (ii) of Theorem \ref{thm:RI implies liouv}, we can simply follow the steps in the proof of part (i), noting that, at any fixed order $n$, the argument from \cite{henheik2025deformational} requires only \emph{finitely} many rational invariant tori to be preserved, whose maximal length we denote by $T_n$.\qed

\section{Length-isospectral families of Liouville metrics: Proof of Theorem~\ref{thm:liourearrange}}
\label{sec:length-liouville}
In this section, we give the proof of Theorem~\ref{thm:liourearrange} that characterizes length-isospectral deformations within the class of Liouville metrics using ``rearrangments''.
Theorem~\ref{thm:liourearrange} follows from the more general, non-deformative results in Theorem~\ref{thm:rearrangement-rotMLS}, which asserts that, roughly speaking, Liouville metrics on $\T^2$ are determined by their marked length spectra up to rearrangement. 

\medskip

More precisely, for a Riemannian metric $g$ on $\T^2$, we recall the notation $\CL(\T^2,g)$ for the unmarked length spectrum and $\CL^{m,n}(\T^2,g)$ for the set of lengths of closed geodesics in the homology class $(m,n) \in \Z^2$.
For $m,n$ not both zero, we set 
\[
    \ell^{m,n}_g := \min \CL^{m,n}(\T^2,g),
\]
and define the \textbf{marked length spectrum} to be the map
\[
(m,n)\quad \longmapsto \quad  \ell^{m,n}_g.
\]
We now define the \textbf{rotational marked length spectrum} to be its restriction to
$(m,n)\in(\Z \setminus \{0\})^2$.

\medskip

For a measurable function $f:\T\to\R$, we define its \textbf{superlevel measure} to be
\begin{equation}\label{eq:superlevel-measure}
    L_f(t) := \leb\{x\in\T : f(x)>t\}, \qquad t\in\R,
\end{equation}
and we say that $\tilde f$ is a \emph{rearrangement} of $f$ if
$L_{\tilde f} \equiv L_f$. 
Although the notion of rearrangements applies to measurable functions, we shall nevertheless assume that, in the rest of this section, all functions defined on $\T$ are at least $C^2$.
We now state the central result of this section. 

\begin{theo}\label{thm:rearrangement-rotMLS}
Let $g$ and $\wt g$ be Liouville metrics on $\T^2$ with conformal factors $1+f_1(x_1)+f_2(x_2)$ and $1+\wt f_1(x_1)+\wt f_2(x_2)$, respectively.

Then the following are equivalent:
\begin{enumerate}
  \item The metrics $g$ and $\wt g$ have the same rotational marked length
        spectrum, i.e.
        \[
        \ell^{m,n}_g = \ell^{m,n}_{\wt g} \qquad
        \text{for all } (m,n)\in (\Z \setminus \{0\})^2.
        \]
  \item There exists a constant $c\in\R$ such that
        $\wt f_1$ is a rearrangement of $f_1+c$ and $\wt f_2$ is a
        rearrangement of $f_2-c$.
\end{enumerate}

Moreover, if each of the functions $f_1,f_2,\wt f_1,\wt f_2$ has exactly two
critical points on $\T$, then either (and hence both) of the above conditions
 implies that $\CL(\T^2,g) = \CL(\T^2,\wt g)$, i.e., the metrics $g$ and $\wt g$ are length-isospectral.
\end{theo}

We can now easily conclude the proof of Theorem~\ref{thm:liourearrange}. 

\begin{proof}[Proof of Theorem~\ref{thm:liourearrange}]
    Let $(g_\eps)$ be the family of Liouville metrics in the statement of the theorem. 
The assumption that the lengths of all rational tori are preserved for the family $(g_\eps)$ implies that the rotational marked length spectrum of $g_\eps$ is independent of $\eps$. The conclusion of Theorem~\ref{thm:liourearrange} now follows immediately from the implication (1) $\implies$ (2) in Theorem~\ref{thm:rearrangement-rotMLS}, applied to the pairs $(f_1(0, \cdot), f_2(0, \cdot))$ and $(f_1(\eps, \cdot), f_2(\eps, \cdot))$.
\end{proof}

\begin{remark}[Length spectrum and Laplace spectrum] 
As discussed in Section \ref{subsec:discuss}, we believe that the length isospectral metrics constructed here and in Example \ref{ex:two rivers} are \emph{not} Laplace isospectral.
    In fact, in the Laplace spectral literature, constructions like that in Example~\ref{ex:two rivers} are not new. For example, Zelditch presents a construction in \cite{Zelditch3}: Consider the round metric on $\mathbb{S}^2$ and add two disjoint bumps. Translating these bumps around $\mathbb{S}^2$ clearly results in nonisometric metrics. The Laplace spectrum may very well change, but the \textit{heat invariants} do not. The heat invariants are coefficients in the expansion
\begin{align}\label{eq:heat trace}
    \tr e^{t \Delta} = \sum_j e^{- t \lambda_j^2} \sim t^{-d/2} \sum_{k = 0}^\infty a_k t^{k}, \quad \text{as}\,\, t \to 0.
\end{align}
It is known that these heat invariants have the form
\begin{align}
    a_k = \int_{\mathbb{S}^2} \Theta_k[g] \,\dif  \vol_g,
\end{align}
for some universal polynomials in the curvature of the metric $g$, together with finitely many (depending on $k$) of its covariant derivatives. It might seem strange that identical heat invariants do not necessarily yield identical Laplace spectra $\lambda_j^2$, but the key point is that the right-hand side of \eqref{eq:heat trace} is not in general a convergent sum, even after renormalizing by a power of $t$. Equality of $\tr e^{t \Delta}$ for all $t$ would indeed imply isospectrality, but its Taylor coefficients at $t = 0$ do not.

\end{remark}

\subsection{Length formulae of closed geodesics of Liouville metrics.}

Before starting the proof of Theorem~\ref{thm:rearrangement-rotMLS} proper, let us record some useful formulae of closed geodesics of Liouville metrics.
These elementary formulae, collected in Lemma~\ref{lem:length-liouville}, will be used both in the proof of Theorem~\ref{thm:rearrangement-rotMLS} and later on in the proof of Proposition~\ref{prop:NCC gen} in the appendix.

\medskip

We shall begin by translating the geodesic equations for a Liouville metric into a separable mechanical Hamiltonian~\eqref{eq:H separable} and the associated one-dimensional Hamiltonian equations via the \emph{Maupertuis principle} (cf.~Section \ref{subsubsec:Maupertuis}).

\medskip

Recall that the Hamiltonian~\eqref{eq:H separable} obtained from the Maupertuis principle (with total energy $= 1$) is the sum of two Hamiltonians $H_1 + H_2$ with $H_i(x_i, \xi_i) = \xi_i^2/2 - f_i(x_i)$.
The Hamiltonian flow on $\T \times \R$ generated by an individual Hamiltonian $H_i$ is described by the following ODE.
\begin{equation} \label{eq:separableODE}
    \begin{cases}
        x_i'(t) = \xi_i(t) \\
        \xi_i'(t) = f_i'(x_i(t))
    \end{cases} \qquad \text{for} \quad i = 1,2.
\end{equation}
Hence, the projections of orbits of \eqref{eq:H separable} onto $\T^2$ are solutions of the uncoupled ODEs
\begin{equation}\label{eq:Maup ODE}
    \begin{cases}
        x_1'(t) = \pm \sqrt{2(e + f_1(x_1(t)))} \\
        x_2'(t) = \pm \sqrt{2(1 - e + f_2(x_2(t)))}
    \end{cases} 
\end{equation}
where $e \in \R$ is a conserved quantity (with $\xi_1^2/2 - f_1(x_1) = e$).

\medskip

In particular, for geodesics in a $(m_1, m_2)$-rational torus, the corresponding periodic solution $(x_1(t), x_2(t))$ of \eqref{eq:Maup ODE} satisfies $x_j'(t) \neq 0$ and $x_j(\wt T) - x_j(0) = m_j$ (for any lift to the universal cover), where
\begin{equation}\label{eq:Maupertuis period}
    \wt T = |m_1|\int_{\T} \frac{\dif x_1}{\sqrt{2 (e + f_1(x_1))}} = |m_2|\int_{\T} \frac{\dif x_2}{\sqrt{2 ( 1 - e + f_2(x_2))}} 
\end{equation}
is the period of the solution $(x_1(t), x_2(t))$ to the ODE~\eqref{eq:Maup ODE}.
The relation~\eqref{eq:Maupertuis period} uniquely determines $e \in \R$ in terms of $(m_1, m_2)$.

\medskip

On the other hand, for the periodic solution $(x_1(t), x_2(t))$ of \eqref{eq:Maup ODE} corresponding to a geodesic in the homology class $(0, m)$, the coordinate $x_1(t)$ oscillates through a connected component $[x_1^{\min}, x_1^{\max}]$ of $\{x \in \T \mid e + f_1(x) > 0\}$.
In this case, either $x_1^{\min} = x_1^{\max}$, forcing $x_1'(t) = 0$ and thus $f_1'(x_1)=0$ by~\eqref{eq:separableODE}; or else $x_1^{\min} < x_1^{\max}$ and we have, similar to~\eqref{eq:Maupertuis period}, the time-matching condition
\[
\wt T = 2n \int_{x_1^{\min}}^{x_1^{\max}} \frac{\mathrm{d}x_1}{\sqrt{e+f_1(x_1)}}=|m|
 \int_{\T} \frac{\mathrm{d}x_2}{\sqrt{1-e+f_2(x_2)}}
\]
for the number $n\in\Z_{>0}$ of oscillations of $x_1(t)$ between $x_1^{\min}$ and $ x_1^{\max}$ over one period $\wt T$.
An analogous formula holds, of course, for homology class $(m,0)$.

\medskip

Next, we calculate the length and the energy of these geodesics.
Let us start with a general solution $(x_1(t), x_2(t))$ of \eqref{eq:Maup ODE} with some choice of $e$.
From the definition and~\eqref{eq:Maup ODE}, the Riemannian length of such orbits over an arbitrary time interval $[0,\wt T]$ is
\begin{equation*}\begin{aligned}
    &\int_{0}^{\wt T} \sqrt{(1 + f_1(x_1(t)) + f_2(x_2(t))) (x_1'(t)^2 + x_2'(t)^2)} \dif t\\
    =&\int_{0}^{\wt T} \sqrt{2}(1 + f_1(x_1(t)) + f_2(x_2(t)))  \dif t\\
    =&\int_{0}^{\wt T} \sqrt{2}(e + f_1(x_1(t)))  \dif t+ 
    \int_{0}^{\wt T} \sqrt{2}(1 - e + f_2(x_2(t)))  \dif t.
\end{aligned}
\end{equation*}
Suppose for example $x_j'(t) > 0$. 
Then the changes of variables
\[
\frac{\dif x_1}{\sqrt{2(e+f_1(x_1))}}=\dif t,
\qquad
\frac{\dif x_2}{\sqrt{2(1-e+f_2(x_2))}}=\dif t
\]
give
\begin{equation}\label{eq:Maup T-length 1}
    \int_{0}^{\wt T}\sqrt{2}\,\bigl(e+f_1(x_1(t))\bigr)\,\dif t
=\sqrt{2}\int_{x_1(0)}^{x_1(\wt T)} \frac{e+f_1(x_1)}{\sqrt{2(e+f_1(x_1))}}\,\dif x_1
=\int_{x_1(0)}^{x_1(\wt T)} \sqrt{e+f_1(x_1)}\,\dif x_1.
\end{equation}
Similarly,
\begin{equation}\label{eq:Maup T-length 2}
    \int_{0}^{\wt T}\sqrt{2}\,\bigl(1-e+f_2(x_2(t))\bigr)\,\dif t=\int_{x_2(0)}^{x_2(\wt T)}\sqrt{1-e+f_2(x_2)}\,\dif x_2.
\end{equation}
It follows that the length of the orbit over $[0, \wt T]$ is given by 
\begin{equation*}
    \int_{x_1(0)}^{x_1(\wt T)} \sqrt{e+f_1(x_1)}\,\dif x_1 + \int_{x_2(0)}^{x_2(\wt T)}\sqrt{1-e+f_2(x_2)}\,\dif x_2.
\end{equation*}
If $(x_1(t), x_2(t))$ is a periodic solution lying in the homology class $(m_1, m_2) \in (\Z\setminus\{0\})^2$ with period $\wt T$, then its Riemannian length over one period is simply 
\begin{equation*}    |m_1|\int_{\T}\sqrt{e+f_1(x_1)}\,\dif x_1 + |m_2|\int_{\T}\sqrt{1-e+f_2(x_2)}\,\dif x_2
\end{equation*}
where $e$ is determined by~\eqref{eq:Maupertuis period}.
Analogous statements hold, of course, for homology $(m,0)$ and $(0,m)$.
We now summarize.

\begin{lemm}\label{lem:length-liouville}
    Let $g = (1 + f_1(x_1) + f_2(x_2)) (\dif x_1^2 + \dif x_2^2)$ be a Liouville metric. Then:
    \begin{itemize}
        \item For each $(m_1, m_2) \in (\Z\setminus\{0\})^2$, there exists a unique 
        $e \in (-\min f_1, 1 + \min f_2)$  satisfying 
        \begin{equation} \label{eq:m1m2 rot}
            |m_1|\int_0^1 \frac{\dif x_1}{\sqrt{e + f_1(x_1)}} = 
            |m_2|\int_0^1 \frac{\dif x_2}{\sqrt{1 - e + f_2(x_2)}}
        \end{equation}
        such that the length of every closed geodesic in an $(m_1, m_2)$-rational torus is 
        \begin{equation} \label{eq:rot length}
            |m_1| \int_\T \sqrt{e + f_1(x_1)} \dif x_1 + |m_2| \int_\T \sqrt{1 - e + f_2(x_2)} \dif x_2.
        \end{equation}
        \item For $m \in \Z\setminus\{0\}$, every closed geodesic in the homology class $(0,m)$ belongs to one of the two following types:
        \begin{itemize}
            \item There exists a critical point $x_1 \in \T$ of $f_1$ such that the closed geodesic is contained in $\{x_1\} \times \T$. 
            In this case, the length of the closed geodesic is given by 
            \begin{equation}  \label{eq:stat length}
                |m|\int_\T \sqrt{1 + f_1(x_1) + f_2(x_2)} \dif x_2
            \end{equation}
            \item There exists $n \in \Z_{>0}$ and $e \in (- \max f_1, -\min f_1)$ satisfying
            \begin{equation} \label{eq:m1m2 osc}
                2n\int_{x_1^{\min}}^{x_1^{\max}} \frac{\dif x_1}{\sqrt{e + f_1(x_1)}} = 
                |m|\int_\T \frac{\dif x_2}{\sqrt{1 - e + f_2(x_2)}}
            \end{equation}
            where $(x_1^{\min}, x_1^{\max})$ is a connected component of $\{x \mid e + f_1(x) > 0\}$.
            In particular, $e$ is not a critical value of $-f_1$.
            In this case, the closed geodesic is contained in $[x_1^{\min}, x_1^{\max}] \times \T$ and its length is given by 
            \begin{equation} \label{eq:osc length}
                2n\int_{x_1^{\min}}^{x_1^{\max}} \sqrt{e + f_1(x_1)} \dif x_1 + |m| \int_\T \sqrt{1 - e + f_2(x_2)} \dif x_2.
            \end{equation}
        \end{itemize}
        \item The symmetric statement holds for closed geodesics in the homology class $(m,0)$.
    \end{itemize}
\end{lemm}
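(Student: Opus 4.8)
The plan is to derive all three statements from the decoupled system~\eqref{eq:Maup ODE} produced by the Maupertuis reduction. First I would set up the dictionary between closed geodesics and periodic solutions of~\eqref{eq:Maup ODE}: applying the Maupertuis principle with the choices~\eqref{eq:Maupertuis}, every closed geodesic of $g$ is, after a time reparametrization, a periodic orbit of $\tilde H$ on $T_h=\{\tilde H=1\}$, and such an orbit projects to a periodic solution $(x_1(t),x_2(t))$ of~\eqref{eq:Maup ODE} carrying a constant $e\in\R$, whose homology class is the net winding of $x_1$ and $x_2$ over one minimal period $T$. Since the corresponding second-order equations read $\ddot x_j=f_j'(x_j)$, a coordinate $x_j$ is constant exactly when it sits at a critical point of $f_j$ at energy $e=-f_j(x_j)$; otherwise it either winds monotonically around $\T$ -- which occurs precisely when $x_j'$ never vanishes, i.e.\ when $e+f_1>0$ (resp.\ $1-e+f_2>0$) on all of $\T$ -- or it oscillates between the two endpoints of a connected component of $\{e+f_1>0\}$ (resp.\ $\{1-e+f_2>0\}$); in the oscillatory case, periodicity forces the turning points to be reached in finite time, hence to be regular points of $f_j$, so the component is a genuine interval and $e$ lies strictly between the relevant extreme values.

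For the first bullet ($(m_1,m_2)\in(\Z^\times)^2$), $m_1\neq0$ forces $x_1$ to wind, so $x_1'$ has constant sign, whence $e+f_1>0$ on $\T$ and $e>-\min f_1$; symmetrically $e<1+\min f_2$. With $\tau_1(e)=\int_0^1 dx_1/\sqrt{2(e+f_1)}$ and $\tau_2(e)=\int_0^1 dx_2/\sqrt{2(1-e+f_2)}$ the times of one winding in each coordinate, the orbit closes with homology $(m_1,m_2)$ iff $T=|m_1|\tau_1(e)=|m_2|\tau_2(e)$, which is~\eqref{eq:m1m2 rot} after cancelling $\sqrt2$. Uniqueness of $e$: the integrand of $\tau_1$ is pointwise strictly decreasing in $e$, $\tau_1(e)\to+\infty$ as $e\to(-\min f_1)^+$ and $\tau_1(e)\to0$ as $e\to+\infty$, while $\tau_2$ increases strictly from $0$ to $+\infty$ on $(-\infty,1+\min f_2)$; the interval $(-\min f_1,1+\min f_2)$ is nonempty because $f_1+f_2>-1$ gives $\min f_1+\min f_2>-1$, so $|m_1|\tau_1-|m_2|\tau_2$ is strictly decreasing from $+\infty$ to $-\infty$ and vanishes at a unique $e$. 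The length is read off from the splitting $1+f_1+f_2=(e+f_1)+(1-e+f_2)$ recorded just above~\eqref{eq:Maup T-length 1}: applying~\eqref{eq:Maup T-length 1}--\eqref{eq:Maup T-length 2} over the full period, the $x_1$-part contributes $|m_1|$ copies of $\int_\T\sqrt{e+f_1}\,dx_1$ and the $x_2$-part $|m_2|$ copies of $\int_\T\sqrt{1-e+f_2}\,dx_2$ (the signs of $dx_j$ and of $x_j'$ cancelling on each winding), giving~\eqref{eq:rot length}.

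For the second bullet (homology $(0,m)$, $m\neq0$), $m\neq0$ forces $x_2$ to wind, so $1-e+f_2>0$ on $\T$, while the vanishing first component forces $x_1$ to have zero net winding, so $x_1$ is constant or oscillatory. If $x_1\equiv x_1^{(0)}$, then $\ddot x_1\equiv0$ gives $f_1'(x_1^{(0)})=0$ and $e=-f_1(x_1^{(0)})$; the $x_1$-part of the length vanishes and the $x_2$-part yields $|m|\int_\T\sqrt{1-e+f_2}\,dx_2=|m|\int_\T\sqrt{1+f_1(x_1^{(0)})+f_2(x_2)}\,dx_2$, i.e.~\eqref{eq:stat length}. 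If $x_1$ is nonconstant, it oscillates in a component $(x_1^{\min},x_1^{\max})\subsetneq\T$ of $\{e+f_1>0\}$ with $e\in(-\max f_1,-\min f_1)$ and regular turning points (so $e$ is not a critical value of $-f_1$); one full oscillation takes time $2\int_{x_1^{\min}}^{x_1^{\max}}dx_1/\sqrt{2(e+f_1)}$, so closing after $n$ oscillations of $x_1$ and $|m|$ windings of $x_2$ gives~\eqref{eq:m1m2 osc}, and the same change-of-variables bookkeeping produces length $2n\int_{x_1^{\min}}^{x_1^{\max}}\sqrt{e+f_1}\,dx_1+|m|\int_\T\sqrt{1-e+f_2}\,dx_2$, i.e.~\eqref{eq:osc length}. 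The third bullet follows from the symmetry $x_1\leftrightarrow x_2$, $e\leftrightarrow 1-e$, $f_1\leftrightarrow f_2$ of~\eqref{eq:Maup ODE}.

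The part I expect to require the most care is, first, the asymptotics $\tau_j(e)\to+\infty$ at the ends of the admissible $e$-interval used for existence and uniqueness in the rotational case: since the minima of $f_1,f_2$ may be degenerate or even attained along a subinterval, this must be established by a local lower bound near the extremizer rather than by a nondegeneracy assumption. Second, the dynamical classification of zero-winding $x_1$-motions needs the justification that a turning point of a genuinely periodic oscillation is a regular point of $f_1$ (ruling out heteroclinic/homoclinic behavior), which is exactly what makes $(x_1^{\min},x_1^{\max})$ a bona fide interval, pins down that $e$ avoids the critical values of $-f_1$, and keeps all the arclength integrals convergent. The remaining content is the bookkeeping of signs and of the numbers of windings and oscillations in the changes of variables~\eqref{eq:Maup T-length 1}--\eqref{eq:Maup T-length 2}.
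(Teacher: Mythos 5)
Your proposal is correct and follows essentially the same route as the paper: there the lemma is stated as a summary (``We summarize.'') of the Maupertuis-principle discussion immediately preceding it --- the decoupled ODEs~\eqref{eq:Maup ODE}, the change-of-variables length computation~\eqref{eq:Maup T-length 1}--\eqref{eq:Maup T-length 2}, and the case analysis of winding versus constant versus oscillatory coordinates. You in fact supply slightly more detail than the paper on the existence and uniqueness of $e$ in the rotational case (strict monotonicity of the two period integrals and their divergence at the endpoints of $(-\min f_1,\,1+\min f_2)$), a point the paper simply asserts with ``the above relation determines $e$.''
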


\begin{remark}
    We also observe from above that there are no contractible closed geodesics for a Liouville metric, other than the trivial cases of `geodesics with length $0$'.
\end{remark}

The rest of the section is devoted to proving
Theorem~\ref{thm:rearrangement-rotMLS}.

\subsection{Proof of Theorem~\ref{thm:rearrangement-rotMLS}.}

For a $C^2$ function $f: \T \to \R$, define 
\begin{equation}\label{eq:J def}
    J_f(e) := \int_{\{x \in \T \ : \ e + f(x) > 0\}} \sqrt{e + f(x)} \dif x, \qquad (e \in \R).
\end{equation}
Clearly $J_f$ is strictly increasing, strictly concave and analytic over the interval $(-\min f, \infty)$. 
We emphasize that the analyticity holds even if $f$ may not be analytic.
Consider $C^2$ functions $f_j: \T \to \R$ for $j = 1,2$ such that $\min f_1 + \min f_2 > -1$ and define 
\begin{equation}\label{eq:Gam def}
\begin{aligned}
    \Gamma_{f_1, f_2} =& \{(J_{f_1}(e), J_{f_2}(1 - e)) \ : \  -\min f_1 < e < 1 + \min f_2\}\\
    \Gamma^+_{f_1, f_2} =& \{(J_{f_1}(e), J_{f_2}(1 - e)) \ : \  1 + \min f_2 < e < 1 + \max f_2\}\\
    \Gamma^-_{f_1, f_2} =& \{(J_{f_1}(e), J_{f_2}(1 - e)) \ : \  -\max f_1 < e < - \min f_1\}.
\end{aligned}
\end{equation}
Then $\Gamma_{f_1, f_2}$ is a real-analytically embedded curve in the first quadrant of $\R^2$ and it is the graph of a strictly decreasing concave function. 
The end points of $\Gamma_{f_1, f_2}$ are 
\begin{equation}\label{eq:Gam ends}
\begin{gathered}
    \Big(\int_\T \sqrt{f_1(x) - \min f_1} \dif x,\ \int_\T \sqrt{1 + \min f_1 + f_2(x)} \dif x\Big) \quad \text{and}\\
    \Big(\int_\T \sqrt{1 + \min f_2 + f_1(x)} \dif x,\ \int_\T \sqrt{f_2(x) - \min f_2} \dif x \Big).
\end{gathered}
\end{equation}
We refer to Figure~\ref{fig:Gamma} for an illustration.

\medskip

Let $\mu_f$ denote the measure on $\R$ given by the push-forward $f_* \leb_{\T}$ of the Lebesgue measure on $\T$ by $f: \T \to \R$. Naturally $\mu_f$ is supported within the interval $[\min f, \max f]$. 
Recall that $L_f$ has been defined in \eqref{eq:superlevel-measure}.
The distribution function $L_f$ determines the push-forward measure $\mu_f$, and conversely. Clearly, if $\wt f$ is a rearrangement of $f$ then $\mu_f = \mu_{\wt f}$. 

\medskip

By the Layer-cake representation (see for example~\cite[Ch. 1.13]{LiebLoss}) and the definition of $\mu_f$,
\begin{equation}\label{eq:layer-cake J}
	J_f(e) = \frac{1}{2} \int_{-e}^\infty \frac{L_f(t)}{\sqrt{t + e}} \dif t = \int_{-e}^\infty \sqrt{t + e} \;\dif \mu_f(t).
\end{equation}
Clearly, if $\wt f$ is a rearrangement of $f$, then \eqref{eq:layer-cake J} shows that $J_f = J_{\wt f}$ as functions. 
It follows that the curves \eqref{eq:Gam def} are invariant under rearrangements of $f_1$ and $f_2$.
Since $J_{f+c}(e) = J_f(e + c)$ for $c \in \R$ by definition of $J_f$, it is clear that the curves \eqref{eq:Gam def} are also invariant (as subsets of $\R^2$) under the transformation $(f_1, f_2) \mapsto (f_1 + c, f_2 - c)$.
We show a converse of this observation.
\begin{lemm}\label{lem:Gam determines f12}
	Let $(\wt f_1, \wt f_2)$ and $(f_1, f_2)$ be two pairs of $C^2$ functions $\T \to \R$ with $\min f_1 + \min f_2 > -1$ and $\min \wt f_1 + \min \wt f_2 > -1$.
	Then $\Gamma_{f_1, f_2} = \Gamma_{\wt f_1, \wt f_2}$ if, and only if there exists $c \in \R$ such that $\wt f_1$ is a rearrangement of $f_1 + c$ and $\wt f_2$ is a rearrangement of $f_2 - c$.
\end{lemm}
\begin{proof}
	We have observed the ``if'' part. 
	For the ``only if'' part, suppose $\Gamma_{\wt f_1, \wt f_2} = \Gamma_{f_1, f_2}$ as subsets of $\R^2$ for some $C^2$ functions $f_1, f_2, \wt f_1$ and $\wt f_2$ satisfying the conditions of the lemma.
	Modulo transformations of the type $(f_1, f_2) \mapsto (f_1 + c, f_2 - c)$, we may assume that $\min f_1 = \min \wt f_1 = 0$.
	To show the lemma it is enough to prove that $\wt f_j$ is a rearrangement of $f_j$ for $j = 1,2$.

\medskip

    We begin by matching the endpoints \eqref{eq:Gam ends} of the curves $\Gamma_{\wt f_1, \wt f_2}$ and $\Gamma_{f_1, f_2}$, which in particular implies
    \begin{equation*}        \int_{\T} \sqrt{f_j(x) - \min f_j}\ \dif x = 
        \int_{\T} \sqrt{\wt f_j(x) - \min \wt f_j}\ \dif x \qquad (j = 1,2).
    \end{equation*}
	Since $J_f(e)$ is real-analytic and strictly increasing for $e > -\min f$, it follows that 
    \begin{equation*}        J_{\wt f_j}\inv \circ J_{f_j} : (-\min f_j, \infty) \to  (-\min \wt f_j, \infty) \qquad (j = 1,2)
    \end{equation*}
    are real-analytic diffeomorphisms.
    Define 
    \begin{equation*}    \varphi(e) :=  \begin{cases}
            J_{\wt f_1}\inv \circ J_{f_1} (e) & (e > -\min f_1) \\
            1 - J_{\wt f_2}\inv \circ J_{f_2} (1 - e)  & (e < 1 + \min f_2).
        \end{cases}
    \end{equation*}
    Since $\Gamma_{f_1, f_2}$ projects injectively to $x_j$-coordinate for $j = 1,2$, we can verify that the two definitions of $\varphi(e)$ are consistent for $-\min f_1 < e < 1 + \min f_2$ and
    \begin{equation}\label{eq:J reparam}
        (J_{f_1}(e), J_{f_2}(1- e)) = (J_{\wt f_1}(\varphi(e)), J_{\wt f_2}(1- \varphi(e))) \qquad  (-\min f_1 < e < 1 + \min f_2).
    \end{equation}
    By analyticity of $J_{f_j}$ and $J_{\wt f_j}$ over their respective domains, we deduce that $\varphi: \R \to \R$ is a real-analytic diffeomorphism.
    We remark that $\varphi(1 + \min f_2) = 1 + \min \wt f_2$ and 
    \begin{equation*}    
        \varphi(0) = \varphi(-\min f_1) = - \min \wt f_1 = 0
    \end{equation*} 
 since we have assumed $\min f_1 = \min \wt f_1 = 0$.

\medskip

	Let us denote $f = f_1$ and $\wt f = \wt f_1$. Consider
	\begin{equation*}
		\wh J_{f}(e) = \int_0^\infty \sqrt{t + e} \; \dif \mu_f(t).
	\end{equation*}
	Choosing the principal branch of the square root, we define $\wh J_f$ analytically on the domain $\C \setminus (-\infty, 0]$.
	Observe that $J_f|_{(0, \infty)}$ is exactly the restriction of $\wh J_f$ on $(0, \infty)$.
	Hence $J_f$ determines $\wh J_f$  by analytic continuation.

\medskip

	Let $\mathbb{H}^\pm$ denote the upper/lower complex half-plane. 
	If $e \in \mathbb{H}^\pm$, then $\sqrt{t + e}$ lies in the first/fourth quadrant of $\C$ for any $t > 0$. It follows from convexity that $\wh J_f$ maps $\mathbb{H}^\pm$ into the first/fourth quadrant respectively.
	In fact, we show that $\wh J_f$ restricts to biholomorphisms
	\begin{equation}\label{eq:hat J bihol}
		\wh J_f: \mathbb{H}^\pm \xrightarrow{\sim} \CD^\pm,
	\end{equation}
	where $\CD^\pm$ are the unbounded domains in the first/fourth quadrant of $\C$ bounded by the curve 
	\begin{equation}\label{eq:D boundary}
		e \mapsto \lim_{\epsilon \to 0^\pm}\wh J_f (e + i \epsilon) = \int_{-e}^{\max f} \sqrt{t + e} \; \dif \mu_f(t) \pm i \int_{0}^{-e} \sqrt{|t + e|}\; \dif \mu_f(t).
	\end{equation}
	Notice that \eqref{eq:D boundary} parametrizes the image of the boundary of $\mathbb{H}^\pm$ under $\wh J_f$, and it is understood that the first integral is zero if $-e > \max f$ and similarly the second integral is zero for $-e < 0$.

\medskip

	To show \eqref{eq:hat J bihol}, we first observe that, since $\mu_f$ is a finite measure supported in the interval $[0, \max f]$, we have 
	\begin{equation}\label{eq:hat J asymptotic}
		|z|^{-1/2} |\wh J_f(z)|\to 1 \quad \text{uniformly as $|z| \to \infty$}
	\end{equation}
	Let $z_0 \in \CD^+$.
	Then, for $R$ sufficiently large, the point $z_0$ is enclosed by a closed loop formed by the image of the semicircle $\{z \in \mathbb{H}^+ : |z| = R\}$ under the function $\wh J_f$ and the boundary \eqref{eq:D boundary}.
	By shrinking $R$, the usual argument using contractible loops shows that $z_0 \in \wh J_f(\mathbb{H}^+)$. Hence $\wh J_f(\mathbb{H}^+) = \CD^+$.
	Similarly $\wh J_f(\mathbb{H}^-) = \CD^-$.
	To see the injectivity of $\wh J_f$, suppose $\wh J_f(z_1) = \wh J_f(z_2)$. Then
	\begin{equation*}
		0 = \wh J_f(z_1) -  \wh J_f(z_2) = \int_0^\infty \sqrt{t + z_1} - \sqrt{t + z_2} \dif \mu_f(t) = (z_1 - z_2) \int_0^\infty \frac{\dif \mu_f(t)}{\sqrt{t + z_1} + \sqrt{t + z_2}}.
	\end{equation*}
	Since $\Re \sqrt{t + z} > 0$ for all $t > 0$ and $z \in \C \setminus (-\infty, 0]$, it is easy to see that the last integral above does not vanish. It follows that $z_1 = z_2$ and hence $\wh J_f$ is injective.
	It is also clear that $\wh J_f$ has nonvanishing derivative everywhere. It follows that \eqref{eq:hat J bihol} are indeed biholomorphisms.
	Similarly, we have $\wh J_{\wt f}: \mathbb{H}^\pm \xrightarrow{\sim} \wt{\CD}^\pm$ where $\wt{\CD}^\pm$ are domains defined analogously with boundaries given by \eqref{eq:D boundary} with $f$ replaced by $\wt f$.
	
\medskip

	Recall that we have $J_f(e) = J_{\wt f}(\varphi(e))$ for $e > 0$ and $\varphi: \R \to \R$ real-analytic.
	Extending $\varphi$ analytically over a complex neighborhood $\CU \supset \R$, we have $\wh J_f = \wh J_{\wt f} \circ \varphi$ over $\CU \setminus (-\infty, 0]$.
	Shrinking $\CU$ if necessary we may assume $\varphi(\CU \cap \mathbb{H}^\pm) \subset \mathbb{H}^\pm$.
	It follows by taking the limit $\varphi(e + i \epsilon)\to \varphi(e)$ for $e \in \R$ that 
	\begin{equation*}		\lim_{\epsilon \to 0^\pm} \wh J_f(e + i \epsilon) = \lim_{\epsilon \to 0^\pm} \wh J_{\wt f}(\varphi(e) + i \epsilon).
	\end{equation*}
	Consequently, the boundary curves \eqref{eq:D boundary} defined by $f$ and by $\wt f$ are identical up to a reparametrization by $\varphi$ and hence both $\wh J_{\wt f}$ and $\wh J_f$ are biholomorphisms $\mathbb{H}^\pm \xrightarrow{\sim} \CD^\pm$ onto the same image.
	We can therefore extend the original definition of $\varphi$ to an analytic function $\varphi = \wh J_{\wt f}\inv \circ \wh J_f$  over $\C\setminus (-\infty, 0]$.
	Since $\varphi$ is real-analytic over $\R$, by the Schwarz reflection principle, it is an entire function.

\medskip

	Since $\varphi(0) = 0$, the function $z \mapsto \varphi(z) /z$ is entire and bounded by the asymptotics \eqref{eq:hat J asymptotic}.
	It then follows from Liouville's theorem that $\varphi(z) = az$ for some $a \in \R$.
	But \eqref{eq:hat J asymptotic} forces $a = 1$. Thus $\varphi(z) = z$ and we obtain accordingly $J_f = J_{\wt f}$ and $J_{f_2}(1 - e) = J_{\wt f_2}(1 - e)$. Since $L_f(e) = L_{\wt f}(e) = 1$ for $e < 0$, Equation \eqref{eq:layer-cake J} implies 
	\begin{equation*}		\int_{0}^\infty \frac{L_f(t)}{\sqrt{t + e}} \dif t = \int_{0}^\infty \frac{L_{\wt f}(t)}{\sqrt{t + e}} \dif t \quad \text{for all $e > 0$}.
	\end{equation*}
	Using the identity $\int_0^\infty s^{-1/2}\eu^{-(t + e) s} \dif s = \pi^{1/2} (e + t)^{-1/2}$, we have 
	\begin{equation*}
		\int_{0}^\infty \frac{L_f(t)}{\sqrt{t + e}} \dif t = \frac{1}{\sqrt{\pi}} \int_{0}^\infty \int_0^\infty  L_f(t) \frac{\eu^{-(e + t)s}}{\sqrt{s}} \dif s \dif t = \frac{1}{\sqrt{\pi}} \mathscr{L}_s \bigg[  \frac{\mathscr{L}_t[L_f(t)](s)}{\sqrt{s}}\bigg](e)
	\end{equation*}
	where $\mathscr{L}_t$  and  $\mathscr{L}_s$  denote the Laplace transform in the variable $t$ and $s$ respectively.
	By injectivity of Laplace transform, we have $L_f = L_{\wt f}$.
	By the same argument applied to $J_{f_2}(1 - e) = J_{\wt f_2}(1 - e)$ for $e < 1 + \min f_2$, we deduce $L_{f_2} = L_{\wt f_2}$. 
	Hence $\wt f_j$ is a rearrangement of $f_j$ as desired. 
	The proof is complete.
\end{proof}

We now prove Theorem~\ref{thm:rearrangement-rotMLS}.

\begin{proof}[Proof of Theorem~\ref{thm:rearrangement-rotMLS}]
	Consider the Liouville metrics $g$ and $\wt g$ given in the statement.
	The lengths of geodesics in an $(m_1, m_2)$-rational torus are described by Lemma~\ref{lem:length-liouville}. 
	We prove (1) $\iff$ (2) by reformulating Lemma~\ref{lem:length-liouville} using the geometric objects $\Gamma_{f_1, f_2}$ and $\Gamma_{f_1, f_2}^\pm$.
\\[2mm]
	\noindent \underline{\textit{Proof of (1) $\implies$ (2).}}
	By definition of $J_f$ and Lemma~\ref{lem:length-liouville}, for each pair of nonnegative integers $(m_1, m_2) \neq (0,0)$, there exists a unique $e \in [-\min f_1, 1 + \min f_2]$ such that 
    \begin{equation}\label{eq:J-tangent}
        m_1 J_{f_1}'(e) - m_2 J_{f_2}'(1-e) = 0
    \end{equation}
    and
    \begin{equation}\label{eq:J-length}
        \ell_g^{m_1, m_2} = m_1 J_{f_1}(e) + m_2 J_{f_2}(1-e).
    \end{equation}
    We note in particular that when $m_1 = 0$, then~\eqref{eq:J-length} follows from the formula~\eqref{eq:stat length} and vice versa.
    Therefore, the marked length spectrum $(m_1, m_2) \mapsto \ell_g^{m_1, m_2}$ completely determines the tangent lines of the curve $\Gamma_{f_1, f_2}$ with rational slope. See Figure~\ref{fig:Gamma}.
    By density of $\Q$ in $\R$ and strict concavity, these tangent lines determine $\Gamma_{f_1, f_2}$, which is just the envelope of these lines.
	It follows from (1) that $\Gamma_{f_1, f_2} = \Gamma_{\wt f_1, \wt f_2}$, which in turn implies (2) via Lemma~\ref{lem:Gam determines f12}.
\\[2mm]
	\noindent \underline{\textit{Proof of (2) $\implies$ (1).}}
    The strict concavity of $\Gamma_{f_1, f_2}$ implies that for each coprime pair of positive integers $(m_1, m_2)$ there is a unique tangent line orthogonal to $(m_1, m_2)$ so that \eqref{eq:J-tangent} holds. 
	The distance of this tangent line to the origin determines $\ell_g^{\pm m_1, \pm m_2}$ via \eqref{eq:J-length}.
    Hence $\ell^{m_1, m_2}_g$ is determined by the curve $\Gamma_{f_1, f_2}$. 
    For the cases $m_1 = 0$ and $m_2 = 0$, formula~\eqref{eq:stat length} from Lemma~\ref{lem:length-liouville} implies $\ell^{0,m_2}_g$ and $\ell^{m_1, 0}_g$ are given by the $y$-coordinate and the $x$-coordinate of the two endpoints of $\Gamma_{f_1, f_2}$, respectively.
	Since (2) implies $\Gamma_{f_1, f_2} = \Gamma_{\wt f_1, \wt f_2}$, it follows that $g$ and $\wt g$ have the same rotational marked length spectrum.
	We have proved (2) $\implies$ (1).
\\[2mm]
	\noindent \underline{\textit{Proof of length-isospectrality.}}
	Assume that each of the functions $f_1$ and $f_2$ has exactly two critical points on $\T$.
	Then, by part (2) of Lemma~\ref{lem:length-liouville}, the lengths of closed geodesics in the homology class $(0, m)$ are given by one of the following values:
	\begin{equation}\label{eq:osc - J}
		|m| J_{f_2}(1 + \max f_1), \qquad 	|m| J_{f_2}(1 + \min f_1) \qquad \text{or} \qquad 2n J_{f_1}(e) + |m| J_{f_2}(1- e) 
	\end{equation}
	where $n \in \Z_{> 0}$ is such that $2n J_{f_1}'(e) - |m| J_{f_2}'(1-e) = 0$.
	Indeed, the first two values correspond to \eqref{eq:stat length} whereas the last value corresponds to \eqref{eq:osc length}.
	We have used the assumption that $f_1$ has exactly 2 critical points, which implies that $\{x \in \T \; : \; f_1(x) + e > 0\}$ is connected for $ - \max f_1 \leq e \leq - \min f_1$.
	Note that the quantities \eqref{eq:osc - J} are all determined by the curve $\Gamma_{f_1, f_2}^-$: the first two are integer multiples of the $y$-coordinates of its endpoints, whereas the values of the last type are determined by tangent lines of $\Gamma_{f_1, f_2}^-$ with rational slope in a way similar to the preceding proof of (1) $\iff$ (2).
	A symmetric argument applies to the homology class $(m, 0)$.
	Therefore $\bigcup_{m_1 m_2 = 0}\CL^{m_1,m_2}(\T^2, g)$ is completely determined by $\Gamma_{f_1, f_2}^\pm$.
	It follows that, if $\wt f_1$ and $\wt f_2$ each have exactly two critical points, as do $f_1$ and $f_2$, and satisfy statement (2), then $\Gamma_{f_1, f_2}^\pm = \Gamma_{\wt f_1, \wt f_2}^\pm$ by the Layer-cake representation~\eqref{eq:layer-cake J} and thus $\bigcup_{m_1 m_2 = 0}\CL^{m_1,m_2}(\T^2, g) = \bigcup_{m_1 m_2 = 0}\CL^{m_1,m_2}(\T^2, \wt g)$.
	Since $\wt g$ and $g$ also have the same marked length spectrum by part (1), and $\CL^{m_1, m_2}(\T^2, g) = \{\ell_g^{m_1, m_2}\}$ for $m_1 m_2 \neq 0$ by rational integrability and Proposition~\ref{prop:geom RT} part 1, we deduce that $\wt g$ and $g$ are length-isospectral.
	The proof is complete.
\end{proof}

\begin{figure}[h]
	\centering
	\includegraphics[width=0.7\textwidth, page=3]{liouville_figs.pdf} 
	\caption{The curves $\Gamma_{f_1, f_2}$ and $\Gamma_{f_1, f_2}^\pm$ in the first quadrant of $\R^2$. The red solid line is the unique tangent line to $\Gamma_{f_1, f_2}$ with slope $-m_1/m_2$ for some fixed $m_1, m_2 \in \Z_{>0}$. The value of $\ell^{m_1, m_2}_g$ is equal to the distance of this tangent line to the origin times $\|m\| = \sqrt{m_1^2 + m_2^2}$.}
	\label{fig:Gamma}
\end{figure}

\begin{remark}
	It is clear from the proof that the assumption that $f_1$, $f_2$, $\wt f_1$ and $\wt f_2$ each has exactly two critical points on $\T$ can be weakened to  the assumption that $\{x \in \T \; : \; f(x) + e > 0\}$ is connected for $f \in \{f_1, f_2, \wt f_1, \wt f_2\}$.
\end{remark}

\begin{remark}
	The argument can be modified to show that, more generally, a deformation of a Liouville metric is length-isospectral if and only if it is a rearrangement-type deformation preserving a `branched' version of the superlevel measures \eqref{eq:superlevel-measure}.
	More precisely, the requirement is that  the deformation should preserve the cyclic order of the connected components of $\{x \in \T \; : \; e + f_j(x) > 0\}$ for $j = 1,2$ as well as the measure of each individual connected component.
\end{remark}

\appendix

\section{On the Noncoincidence Condition (NCC)} \label{app:NCC}

In this appendix, we prove the genericity of NCC within the class of Liouville metrics.
Therefore, the conditions on $f_1$ and $f_2$ for Theorems~\ref{thm:main2}--\ref{thm:mainsecond} to hold are generically satisfied.
We emphasize that a property holds \emph{generically} if it holds on a residual set, i.e., a countable intersection of open dense sets.

\medskip

Recall that a Riemannian metric $g$ on $\T^2$ is said to satisfy the \emph{noncoincidence condition} (NCC) on a closed subset $I \subset \R_{\geq 0}$ if
\begin{equation*}
    I \cap \CL^{\osc}(\T^2, g) \cap \CL^{\rot}(\T^2, g) = \varnothing.
\end{equation*}
We refer to Section~\ref{sec:dynamics} for definitions of $\CL^{\osc}$ and of $\CL^{\rot}$.
The following proposition is the main result of this appendix.

\begin{prop}\label{prop:NCC gen}
    Let $I \subset \R_{\geq 0}$ be a \emph{compact} subset.
    Then the following statements hold with respect to the $C^\infty$ or $C^r$ ($r \geq 2$) topology on the space of Riemannian metrics on $\T^2$.
    \begin{enumerate}
        \item The set of metrics satisfying NCC on $I$ is open.
        \item The set of Liouville metrics satisfying NCC on $I$ is dense within the class of Liouville metrics.
    \end{enumerate}
    In particular, NCC up to $T=\infty$ (resp.\ $T<\infty$) holds for a generic set (resp.\ an open dense set) of Liouville metrics.
\end{prop}

It is important for the proof of Theorem~\ref{thm:laplacetolength} to see that NCC implies that $\CL^{\osc}$ and $\CL^{\rot}$ are not only disjoint, but locally isolated from each other by a positive distance.
This is a simple consequence of the following fact on $\CL^{\osc}$ and $\CL^{\rot}$, which we shall also establish in this appendix.

\begin{prop}\label{prop:LSP close}
    Let $g$ be a $C^2$ Riemannian metric on $\T^2$. Then both $\CL^{\rot}(\T^2, g)$ and $\CL^{\osc}(\T^2, g)$ are closed subsets of $\R_{\geq 0}$.
\end{prop}

The proofs of part (1) of Proposition~\ref{prop:NCC gen} and of~\ref{prop:LSP close} rely on the following two elementary lemmas about the length spectrum $\CL^{m,n}(\T^2, g)$ within a fixed homology class $(m,n) \in \Z^2$.
The first lemma is a simple \emph{a priori} lower bound on $\min \CL^{m,n}(\T^2, g)$. 
This allows us to reduce the analysis of $\CL^{\rot}(\T^2, g)$ and $\CL^{\osc}(\T^2, g)$ over bounded intervals to the length spectra of finitely many homology classes.
The result of this lemma is rather elementary but we shall supply a complete proof for the sake of clarity.

\begin{lemm}\label{lem:NCC mn lower bd}
Let $g$ be a $C^2$ Riemannian metric on $\T^2$ and let $T>0$.
Then there exist $N>0$ and a $C^2$-neighborhood $\CU$ of $g$ such that for all $g'\in\CU$ and all $(m,n)\in\Z^2$ satisfying $|m|+|n|>N$, we have
\begin{equation}\label{eq:mn lower bd}
\CL^{m,n}(\T^2,g')\cap [0,T]=\varnothing.    
\end{equation}
\end{lemm}

\begin{proof}
Let $g_0$ denote the flat metric on $\T^2$ induced from the Euclidean metric on $\R^2$.
By compactness, there exists a constant $c>0$ such that, for all tangent vectors $\xi \in T\T^2$,
\[
|\xi|_g \geq c |\xi|_{g_0}.
\]
After shrinking to a sufficiently small $C^2$-neighborhood $\CU$ of $g$, we may assume that
\[
|\xi|_{g'} \geq \frac{c}{2}|\xi|_{g_0}
\]
for all $g' \in \CU$.
Let $\gamma$ be any closed curve in the homology class $(m,n)$. Lift $\gamma$ to a curve $\widetilde\gamma$ in $\R^2$. Then the endpoints of $\widetilde\gamma$ differ by the translation vector $(m,n)$, and therefore, the Pythagorean theorem implies
\[
    |\gamma|_{g_0}
    \geq \sqrt{m^2+n^2}.
\]
Hence, for every $g'\in\CU$,
\[
    |\gamma|_{g'}
    \geq
    \frac{c}{2}|\gamma|_{g_0}
    \geq
    \frac{c}{2}\sqrt{m^2+n^2}
    \geq
    \frac{c}{2\sqrt 2}(|m|+|n|).
\]
Choose $N>2\sqrt 2\,T/c$. Then whenever $|m|+|n|>N$, every closed curve, and hence every closed geodesic, in the homology class $(m,n)$ has $g'$-length greater than $T$. We have proved~\eqref{eq:mn lower bd}.
\end{proof}

The second lemma establishes that the length spectrum $\CL^{m,n}(\T^2, g)$ is a closed subset of $\R$ and has a stability property under $C^2$ perturbations: roughly speaking, new points in $\CL^{m,n}(\T^2, g)$ can only arise either by `bifurcating' from some existing points in $\CL^{m,n}(\T^2, g)$ or by `descending' from $+\infty$.

\begin{lemm}\label{lem:LSP stability}
    Let $(m,n) \in \Z^2$. 
    Then the following holds.
    \begin{enumerate}
        \item For any $C^2$ metric $g$, the length spectrum $\CL^{m,n}(\T^2, g)$ is a closed subset of $\R_{\geq 0}$.
        \item Let $K \subset \R_{\geq 0}$ be a compact subset. The set of metrics $g$ satisfying
        \begin{equation*}
            \CL^{m,n}(\T^2, g) \cap K = \varnothing
        \end{equation*}
        is open in the $C^2$ topology.
    \end{enumerate}
\end{lemm}

\begin{proof}
    Let $g$ be a $C^2$ metric on $\T^2$ and let $(m,n) \in \Z^2$.
    We shall reformulate the conclusions in terms of intersection properties of certain compact sets in the phase space.
    We begin by lifting the system to the universal cover and consider the equivalent setting of $\Z^2$-periodic metrics $g$ on $\R^2$. We still denote by $\Phi_g^t$ the associated geodesic flow on the unit tangent bundle $S\R^2$.
    We denote by $S\R^2$ the unit tangent bundle on $\R^2$ with respect to $g$ and let $\pi: S\R^2 \to \R^2$ be the canonical projection.
    Write
	\begin{equation}\label{eq:NCC AB def}
    \begin{aligned}
		A_g(t) &:= \{(\xi, \Phi^t_g(\xi))\ : \ \xi \in S\R^2,\ \pi(\xi) \in [0,1]^2\}, \\ 
        B_{m,n} &:= \{(\xi, \xi') \ : \ \xi \in S\R^2,\ \pi(\xi) \in [0,1]^2, \ \xi' = \tau_{(m,n)} (\xi)\}
    \end{aligned}
	\end{equation}
    where $\tau_{(m,n)}$ is the deck translation on $S\R^2$ associated with $(m,n)$, i.e., it shifts the basepoint $\pi(\xi)$ by $(m,n)$ and leaves the direction unchanged.
    Note that $[0,1]^2$ is a fundamental domain for $\T^2$.
    Thus, the existence of a closed geodesic of $(\T^2, g)$ of length $L \geq 0$ in the homology class $(m,n)$ is equivalent to the existence of $\xi$ with $\pi (\xi) \in [0,1]^2$ such that $\Phi^L_g(\xi) = \tau_{(m,n)} (\xi)$.
    It now follows from the definitions~\eqref{eq:NCC AB def} that, for $t \geq 0$,
    \footnote{We include $0$ as a point in $\CL^{0,0}(\T^2, g)$ (by regarding a geodesic of length $0$ as a closed geodesic). Hence the equivalence~\eqref{eq:NCC LSP closedness condition} holds, in particular, for $m = n = 0$.}
    \begin{equation}\label{eq:NCC LSP closedness condition}
        t \notin \CL^{m,n}(\T^2, g) \quad \iff \quad  A_g(t) \cap B_{m,n} = \varnothing.
    \end{equation}
    On the other hand, let $K \subset \R_{\geq 0}$ be a compact subset and put 
    \begin{equation}
        A_g(K) := \bigcup_{t \in K} A_g(t).
    \end{equation}
    Then we have 
    \begin{equation}\label{eq:NCC LSP stability condition}
        \CL^{m,n}(\T^2, g) \cap K = \varnothing \quad  \iff  \quad A_g(K) \cap  B_{m,n} = \varnothing.
    \end{equation}
    Observe that $A_g(t)$, $A_g(K)$ and $B_{m,n}$ are all compact subsets of $S\R^2 \times S\R^2$. Since the geodesic flow $\Phi^t_g$ depends continuously on $t \in \R$ and on $g$ in the $C^2$ topology, the sets $A_g(t)$ and $A_g(K)$ vary continuously with respect to $t$ and $g$. 
    We also remark that, since the unit tangent bundle $S\R^2$ varies continuously with $g$, the set $B_{m,n}$ does as well. 
    It now follows that the conditions on the right-hand sides of~\eqref{eq:NCC LSP closedness condition} and~\eqref{eq:NCC LSP stability condition} are open with respect to $t$ and $g$.
    By these equivalences, we deduce that $\CL^{m,n}(\T^2, g)$ is closed and that $\CL^{m,n}(\T^2, g) \cap K = \varnothing$ holds for a $C^2$-open set of metrics $g$, as desired.
\end{proof}

Having Lemma~\ref{lem:NCC mn lower bd} and Lemma~\ref{lem:LSP stability} at hand, we may now easily prove Proposition~\ref{prop:LSP close}.

\begin{proof}
We prove the statement for $\CL^{\rot}$; the proof for $\CL^{\osc}$ is identical.
Fix $T>0$. By Lemma~\ref{lem:NCC mn lower bd}, there exists $N>0$ such that
\[
    \CL^{\rot}(\T^2,g)\cap[0,T]
    =
    \bigcup_{\substack{mn\neq 0\\ |m|+|n|\leq N}}
    \left(
        \CL^{m,n}(\T^2,g)\cap[0,T]
    \right).
\]
The right-hand side is a finite union of closed subsets of $[0,T]$ by Lemma~\ref{lem:LSP stability} part (1). 
Therefore, $\CL^{\rot}(\T^2,g)\cap[0,T]$ is closed for every $T > 0$.
Now let $L_j\in\CL^{\rot}(\T^2,g)$ and suppose $L_j\to L\in\R_{\geq 0}$. Choose $T>L$. Then $L_j\in[0,T]\cap\CL^{\rot}(\T^2,g)$ for all sufficiently large $j$. By closedness, we obtain
\[
    L\in [0,T]\cap\CL^{\rot}(\T^2,g).
\]
Hence $L\in\CL^{\rot}(\T^2,g)$, so $\CL^{\rot}(\T^2,g)$ is closed.
\end{proof}

Finally, we complete the proof of Proposition~\ref{prop:NCC gen}.

\begin{proof}[Proof of Proposition~\ref{prop:NCC gen}]
    Let us fix a compact subset $I \subset \R_{\geq 0}$. We first show that NCC on $I$ is an open property in the general space of $C^2$ Riemannian metrics on $\T^2$.

\medskip

    \noindent \underline{\textit{Proof of part (1).}} 
    Fix a $C^2$ metric $g$ and assume that $g$ satisfies NCC on $I$, i.e.,
    \begin{equation}\label{eq:I-NCC proof starting condition}
        I \cap \CL^{\osc}(\T^2, g) \cap \CL^{\rot}(\T^2, g) = \varnothing.
    \end{equation}
    Our goal is to show that NCC on $I$ holds for every metric in some $C^2$ neighborhood of $g$.

\medskip

    Since $I$ is bounded, by Lemma~\ref{lem:NCC mn lower bd}, there exist $N > 0$ and a $C^2$ neighborhood $\CU'$ of $g$ such that for all $g' \in \CU'$ we have 
    \begin{equation}\label{eq:NCC mn lower bd}
        |m| + |n| > N \implies \CL^{m,n}(\T^2, g') \cap I = \varnothing.
    \end{equation}
    Therefore, by definition of the oscillatory length spectrum,
    \begin{equation}\label{eq:NCC finite mn}
        I \cap \CL^{\osc}(\T^2, g') = I \cap \bigg( \bigcup_{\substack{|m| + |n| \leq N \\ mn = 0}} \CL^{m,n}(\T^2, g') \bigg),
    \end{equation}
    where $\CL^{m,n}(\T^2, g')$ is closed for each $(m,n)$ by Lemma~\ref{lem:LSP stability} part (1). 
    It follows that $I \cap \CL^{\osc}(\T^2, g')$ is compact.
    Similarly, $I \cap \CL^{\rot}(\T^2, g')$ is also compact. 
    Moreover, by assumption~\eqref{eq:I-NCC proof starting condition} the two compact sets are disjoint when $g' = g$.
    We may pick open neighborhoods $V_1, V_2 \subset \R_{\geq 0}$ such that 
    \begin{equation}\label{eq:V12 NCC proof}
        V_1 \cap V_2 = \varnothing, \quad I \cap \CL^{\osc}(\T^2, g) \subset V_1 \quad \text{and} \quad I \cap \CL^{\rot}(\T^2, g) \subset V_2.
    \end{equation}
    In view of~\eqref{eq:NCC finite mn}, we have 
    \begin{equation}
        \begin{cases}
            I \cap \CL^{m,n}(\T^2, g) \subset V_1  & \text{if $mn = 0$}\\
            I \cap \CL^{m,n}(\T^2, g) \subset V_2  & \text{if $mn \neq 0$}.
        \end{cases}
    \end{equation}
    Note that, by~\eqref{eq:NCC mn lower bd}, the above inclusions trivially hold when $|m|+ |n| > N$.
    Next, we apply Lemma~\ref{lem:LSP stability} part (2) to $K = I \setminus V_1$ when $mn = 0$ and to $K = I \setminus V_2$ when $mn \neq 0$. We thereby obtain a $C^2$ neighborhood $\CU'_{m,n}$ of $g$ such that, for all $g' \in \CU'_{m,n}$, we have 
    \begin{equation}\label{eq:NCC perturbed separation}
        \begin{cases}
            I \cap \CL^{m,n}(\T^2, g') \subset V_1 &\text{in the case $mn = 0$}\\
            I \cap \CL^{m,n}(\T^2, g') \subset V_2  &\text{in the case $mn \neq 0$}.
        \end{cases}
    \end{equation}
    Finally, let us define $\CU$ by the finite intersection 
    \begin{equation*}
        \CU = \CU' \cap \bigg( \bigcap_{\substack{(m,n) \in \Z^2\\ |m| + |n| \leq N}} \CU'_{m,n}\bigg).
    \end{equation*}
    By~\eqref{eq:NCC mn lower bd} and~\eqref{eq:NCC perturbed separation}, for all $g' \in \CU$ we have
    \begin{equation}
        I \cap \CL^{\osc}(\T^2, g') \subset V_1 \quad \text{and} \quad I \cap \CL^{\rot}(\T^2, g') \subset V_2.    
    \end{equation}
    Since $V_1 \cap V_2 = \varnothing$, we have proved that NCC holds on $I$ for all $g'$ in the neighborhood $\CU$, as desired.

\medskip

    The preceding proof gives $C^2$-openness. This obviously implies the openness in the finer topologies considered here. 
    The proof of part (1) is now complete.

    \medskip

	\noindent \underline{\textit{Proof of part (2).}}
    We now restrict to the class of Liouville metrics and show that a dense set of Liouville metrics satisfies NCC on $I$.

\medskip

    Let $g$ be a Liouville metric whose line element takes the form~\eqref{eq:deform}.
    Recall that, by Proposition~\ref{prop:geom RT} part (1) and the rational integrability of $g$, when $mn \neq 0$, the set $\CL^{m,n}(\T^2, g)$ is a singleton. 
    Let us denote $\CL^{m,n}(\T^2, g) = \{\ell^{m,n}_g\}$.
    We first observe that the desired density result follows from the density of the condition
    \begin{equation}\label{eq:NCC density goal 1}
        \ell^{m,n}_g \notin \CL^{0,k}(\T^2, g)
    \end{equation}
    and the analogous condition 
    \begin{equation}\label{eq:NCC density goal 2}
        \ell^{m,n}_g \notin \CL^{k,0}(\T^2, g)
    \end{equation}
    on the Liouville metric $g$ for \emph{fixed} nonzero integers $m, n$ and $k$, with respect to the considered topology.
    Indeed, an argument similar to part (1) shows that, with $m$, $n$ and $k$ fixed, both~\eqref{eq:NCC density goal 1} and~\eqref{eq:NCC density goal 2} are open conditions on $g$.
    The openness follows from the continuity of $g\mapsto \ell^{m,n}_g$ in the Liouville class, together with Lemma~\ref{lem:LSP stability} applied to the fixed class $(0,k)$ or $(k,0)$.
    Once both~\eqref{eq:NCC density goal 1} and~\eqref{eq:NCC density goal 2} are also shown to be dense conditions, we can find a $C^2$ neighborhood $\CU'$ of $g$ and $N > 0$ as in part (1) so that, for metrics in $\CU'$, NCC on $I$ reduces to finitely many conditions of these two types.
    Taking a finite intersection of these conditions gives an open and dense subset of $\CU'$ consisting of Liouville metrics satisfying NCC on $I$.

\medskip

    Hence, we shall focus on proving the density of metrics $g$ satisfying~\eqref{eq:NCC density goal 1} and~\eqref{eq:NCC density goal 2}.
    We shall consider only~\eqref{eq:NCC density goal 1}; the proof of the other case is entirely symmetric.
    More concretely, we start with the case in which~\eqref{eq:NCC density goal 1} does \emph{not} hold, and construct arbitrarily small perturbations of $f_1$ appearing in the conformal factor of $g$ that break the coincidence.

\medskip

	The key idea is to choose an interval $J$ such that the annulus $J \times \T \subset \T^2$ does not intersect orbits in the homology class $(0,k)$, and then perturb $f_1$ on $J$ to change $\ell^{m,n}_g$ without affecting $\CL^{0,k}(\T^2, g)$.
    By Lemma~\ref{lem:length-liouville}, there are two types of closed geodesics contributing to $\CL^{0,k}(\T^2, g)$: those contained in a circle $\{x_1\}\times\T$ for some critical point $x_1$ of $f_1$, and those not contained in such circles. 
    In the first case, the geodesics are $|k|$-fold iterates of a $(0,1)$ closed geodesic lying on a critical torus.\footnote{If $x_1$ is a local minimum of $f_1$, then this geodesic is hyperbolic and corresponds to the separatrix in the phase diagram.}
    In the second case, the geodesics oscillate around geodesics of the first type that correspond to a local maximum of $f_1$.
    We now focus on this second case. The heuristic is that, for bounded length, such geodesics cannot get too close to a geodesic of the first type that corresponds to a local minimum of $f_1$.

    \medskip
    
    Let us fix nonzero $m$, $n$ and $k$ from now.
    To obtain this perturbation interval $J$, let us assume, without loss of generality, that $f_1$ belongs to the open and dense set of Morse functions on $\T$; that is, all critical points of $f_1$ are nondegenerate.
	Recall that, by Lemma~\ref{lem:length-liouville}, such geodesics are associated with $j \in \Z_{>0}$ and satisfy 
	\begin{equation}\label{eq:NCC osc interval}
		2j \int_{x_1^{\min}}^{x_1^{\max}} \frac{\dif x_1}{\sqrt{e + f_1(x_1)}} = |k| \int_{0}^{1} \frac{\dif x_2}{\sqrt{1 - e + f_2(x_2)}} \quad \text{for some } e \in (- \max f_1, -\min f_1),
	\end{equation}
	where $(x_1^{\min}, x_1^{\max})$ is a connected component of $\{x_1 \in \T \, : \, e + f_1(x_1) > 0\}$ and the $x_1$-coordinate of the geodesic oscillates within $[x_1^{\min}, x_1^{\max}]$.
	Since $k$ is fixed and $1 - e + f_2(x_2) > 1 + \min f_1 + f_2(x_2) > 0$, the right-hand side of~\eqref{eq:NCC osc interval} is bounded from above uniformly in $e$. 
    On the other hand, by the assumption on $f_1$, for all $e < -\min f_1$ sufficiently close to $-\min f_1$, the set $\{x_1 \, : \, e + f_1(x_1) > 0\}$ consists of a fixed number of intervals whose endpoints vary continuously in $e$ and converge to global minima of $f_1$.
    In particular, for any choice of the interval $(x_1^{\min}, x_1^{\max})$, the integral on the left-hand side diverges as $e \nearrow -\min f_1$.

\medskip

    Since there are only finitely many such intervals and the divergence is uniform over this finite collection, while the right-hand side is uniformly bounded in $e$, we may choose $c>0$ such that the equality~\eqref{eq:NCC osc interval} has no solution for
    \[
    e\in[-\min f_1-c,-\min f_1).
    \]
    More precisely, for any $e$ in the interval $[-\min f_1 - c,\, -\min f_1)$, the left-hand side of~\eqref{eq:NCC osc interval} is strictly larger than the right-hand side. 
    Let us now pick a nontrivial open interval
    \begin{equation}\label{eq:NCC J choice}
        J \subset \{x_1 \, : \, f_1(x_1) < \min f_1 + c\}
    \end{equation}
    with the requirement that $J$ does not contain any critical point of $f_1$. By construction, the annulus $J \times \T$ does not intersect \textit{any} closed geodesics in the homology class $(0,k)$.

    \medskip
    
	Next, choose a perturbation $f_1\mapsto f_1+\eps\phi$ (with $f_2$ left unperturbed), where $\phi$ is a smooth, nonzero, nonnegative function compactly supported in $J$, and denote by $g_\eps$ the corresponding perturbed Liouville metric.
    For sufficiently small $|\eps|$, we can assume that the perturbed function $f_1 + \eps \phi$ creates no new critical points on $J$, and hence that this perturbation does not create additional geodesics of the first type.
    Furthermore, by~\eqref{eq:NCC J choice}, for all sufficiently small $|\eps|$ and all $x \in \T$ we have
    \begin{equation}\label{eq:NCC level set inv}
        f_1(x) + \eps \phi(x) > \min f_1 + c \iff f_1(x) > \min f_1 + c.
    \end{equation}
	Since $\phi$ vanishes on critical points of $f_1$ by the choice of $J$, the lengths of existing closed geodesics of the first type remain unchanged. 
	On the other hand, we consider the geodesics of the second type, which are described by the condition~\eqref{eq:NCC osc interval}.
    Note that, for $e < -\min f_1 - c$, it follows from~\eqref{eq:NCC level set inv} and the support condition on $\phi$ that both sides of the equation in~\eqref{eq:NCC osc interval} are independent of $\eps$.
    In particular, the geodesics whose oscillation intervals do not intersect $J$ are unaffected by the perturbation.
    By the choice of $c$, for $e \geq -\min f_1 - c$ the left-hand side of the equation in~\eqref{eq:NCC osc interval} is strictly larger than the right-hand side.
    By the compact-support condition of $\phi$, it is not hard to see that the left-hand side of~\eqref{eq:NCC osc interval} varies uniformly continuously in $\eps$.
    When $|\eps|$ is sufficiently small, the equality in~\eqref{eq:NCC osc interval} therefore does not hold for any $-\min f_1 - c \leq e < - \min f_1$.
    In this case, this perturbation does not create additional geodesics of the second type.
	In summary, we deduce that 
    \[
        \CL^{0,k}(\T^2,g_\eps) = \CL^{0,k}(\T^2, g)
    \] 
    for all sufficiently small $|\eps|$.

\medskip

    On the other hand, taking the derivative of the length functional, we find that the first-order variation of $\ell^{m,n}_{g_\eps}$ in $\eps$ is given by the integral
	\begin{equation}
		\frac{\dif}{\dif \eps} \ell^{m,n}_{g_\eps}\bigg|_{\eps = 0} = \frac12 \int \frac{\phi(\gamma_1(t))}{\sqrt{1 + f_1(\gamma_1(t)) + f_2(\gamma_2(t))}} \sqrt{\dot\gamma_1(t)^2 + \dot\gamma_2(t)^2} \,\dif t,
	\end{equation}
	over one period, where $\gamma$ is any closed geodesic in the homology class $(m,n)$.
	Since $m \neq 0$, the coordinate $\gamma_1$ winds nontrivially around $\T$ and therefore intersects the support of $\phi$. Thus, the above integral is strictly positive.
	By the inverse function theorem, the quantity $\ell^{m,n}_{g_\eps}$ covers an open neighborhood of $\ell^{m,n}_g$ as $\eps$ varies in a small interval around $0$.
    
    \medskip
	
    Finally, by the standard argument using Sard's Lemma (see for example \cite{Hirsch}), the spectrum $\CL^{0,k}(\T^2, g)$ has zero Lebesgue measure and hence does not contain an interval. 
    It follows that there exist arbitrarily small $\eps$ such that
    \begin{equation*}
        \ell^{m,n}_{g_\eps} \notin \CL^{0,k}(\T^2, g_\eps).
    \end{equation*}
	Since $g_\eps \to g$ in the $C^\infty$ topology, this proves the desired density in the $C^r$ category for $r \in \{2,3,\ldots,\infty\}$.

    \medskip

    \noindent \underline{\textit{Proof of genericity.}} By statements (1) and (2), for every $T < \infty$, NCC up to $T$ is an open and dense condition on the class of Liouville metrics. Taking a countable intersection along a sequence $T \to \infty$, we deduce that NCC at all lengths ($T = \infty$) holds for a generic set within the class of Liouville metrics. 
    The proof is complete.
\end{proof}

\bibliographystyle{alpha}
\bibliography{ref}

\end{document}